\newtheorem{theorem}{Theorem}[section]
\newtheorem{remark}[theorem]{Remark}
\newtheorem{claim}[theorem]{Claim}
\newtheorem{proposition}[theorem]{Proposition}
\newtheorem{example}[theorem]{Example}
\newtheorem{definition}[theorem]{Definition}
\newtheorem{question}[theorem]{Question}
\newcommand{\comm}[1]{}
\newcommand{\dom}{\operatorname{dom}}
\def\leukfrac#1/#2{\leavevmode
               \kern.1em
                \raise.9ex\hbox{\the\scriptfont0 ${}_#1$}
                \hskip -1pt\kern-.1em
                /\kern-.15em\lower.10ex\hbox{\the\scriptfont0 ${}_#2$}}
\def\diam{\mathop{\operator@font diam}\nolimits}
\newcommand{\comment}[1]{$\blacktriangleright$\ {%\color{blue}
		\sf #1}\ 
$\blacktriangleleft$}
\newcommand{\hide}[1]{}
\newcommand{\GG}{{\mathcal G}}
\title{Measurable versions of Vizing's theorem}
\author{Jan Greb\'ik}
\author{Oleg Pikhurko}
\address{%\emph{Grebík}:
Mathematics Institute.
University of Warwick,
Coventry CV4~7AL, UK} 
\email{jan.grebik@warwick.ac.uk}
\address{%\emph{Pikhurko}: 
Mathematics Institute and DIMAP.
University of Warwick,
Coventry CV4~7AL, UK}
\email{O.Pikhurko@warwick.ac.uk}
\thanks{\emph{Jan Grebík} was supported by the GACR project GJ16-07822Y, RVO:67985807, and
Leverhulme Research Project Grant RPG-2018-424.
\emph{Oleg Pikhurko} was supported by Leverhulme Research Project Grant RPG-2018-424.}
\begin{document}

\begin{abstract}
We establish two versions of Vizing's theorem for Borel multi-graphs whose vertex degrees and edge multiplicities are uniformly bounded by respectively $\Delta$ and $\pi$. The ``approximate'' version states that, for any Borel probability measure on the edge set and any $\epsilon>0$, we can properly colour  all but $\epsilon\textsl{}$-fraction of edges with $\Delta+\pi$ colours in a Borel way.
The ``measurable'' version, which is our main result, states that if, additionally, the measure is invariant, then there is a measurable proper edge colouring of the whole edge set with at most $\Delta+\pi$ colours.
\end{abstract}

\maketitle

\section{Introduction}\label{sec:Introduction}

One of fundamental notions of graph theory is the \emph{chromatic index $\chi'(G)$} of a graph $G$ which is the smallest number of colours needed to colour all edges of $G$ so that every two edges that intersect have different colours. This definition also applies to \emph{multi-graphs} (where a pair of vertices may be connected by more than one edge, with the edges supported on the same pair of vertices called \emph{parallel}); in particular, no two parallel edges are allowed to have the same colour.
%1COMMENT
\comm{
While the chromatic index is a special case of the chromatic number (namely, $\chi'(G)$ is equal to $\chi(L(G))$, the chromatic number of the line graph of $G$),
the theory of edge-colourings in, a sense, more satisfactory, not least because of its direct connections to matching theory.
}

For a multi-graph $G$ (finite or infinite), let $\Delta(G)$ denote the \emph{maximum degree}, that is, the maximum number of edges that are incident to a vertex. Also, let $\pi(G)$ be the \emph{maximum multiplicity} of $G$, that is, the maximum number of edges with the same endpoints. Thus $\pi(G)= 1$ for graphs with non-empty edge sets.
The multi-graphs that we consider in this paper (discrete or Borel) are always of bounded degree and therefore $\Delta(G),\pi(G)\in \mathbb{N}$.

The \emph{greedy upper bound} $\chi'(G)\le 2\Delta(G)-1$ for a finite multi-graph $G$ can be established by a simple greedy algorithm that colours edges one by one. (Note that at most $2(\Delta(G)-1)$ colours can be forbidden at any edge that we are about to colour.) Shannon~\cite{Shannon49} proved that $\chi'(G)\le \lfloor\, \frac32\,\Delta(G)\,\rfloor$ and a simple example shows that this bound is best possible as a function of the maximum degree only. A remarkable theorem of Vizing~\cite{Vizing64}, that was also proved independently by Gupta~\cite{Gupta66}, states that $\chi'(G)\le \Delta(G)+\pi(G)$.
In particular, if $G$ is a graph then $\chi'(G)\le \Delta(G)+1$, which is best possible when $\Delta(G)\ge 2$.
%is about factor-2 smaller than the trivial upper bound. 
(Incidentally, let us observe that the best possible upper bound on $\chi'(G)$ as a function of $\Delta(G)$ and $\pi(G)$ is not known in general, see 
Scheide and Stiebitz~\cite{ScheideStiebitz12} for our current knowledge on this question.) 
 Also, the much earlier theorem of K\H onig \cite{Konig16} states that $\chi'(G)\le \Delta(G)$ when $G$ is a bipartite graph, and this bound extends to bipartite multi-graphs. These classical results laid the foundation of edge-colouring, an important and active area of graph theory; see, for example, 
the recent book on edge-colouring by Stiebitz, Scheide, Toft and Favrholdt~\cite{StiebitzScheideToftFavrholdt:gec}.

In this paper, we consider mostly infinite (multi-)graphs but, as was mentioned earlier, we restrict ourselves only to ones of bounded {maximum degree}.
If one does not impose any further structure then, for example, Vizing's theorem extends to infinite  multi-graphs by the Axiom of Choice. Indeed, every finite subgraph is edge-colourable by the original 
theorem so the Compactness Principle gives the required edge-colouring of the whole multi-graph. The focus of this paper is to find ``constructive'' edge-colourings.

Kechris, Solecki and Todorcevic~\cite{KechrisSoleckiTodorcevic99} initiated systematic study of Borel colourings. One of the basic objects here is a \emph{Borel graph}
% (see e.g.\ Lov\'asz \cite[Section 18.1]{Lovasz:lngl} for an introduction) 
which is a triple $\GG=(V,\mathcal B,E)$, where $(V,\mathcal B)$ is a standard Borel space and $E$ is a Borel subset of $[V]^2:=\{\,\{x,y\}: \mbox{distinct }x,y\in V\}$.
%2COMMENT
\comm{These objects appear in descriptive combinatorics (see e.g.\ the survey by Kechris and Marks~\cite{KechrisMarks:survey}) and Borel orbit equivalence (see e.g.\ the book by Kechris and Miller~\cite{KechrisMiller:toe}) and have connections to many other areas. For example, using Borel graphs Elek and Lippner~\cite{ElekLippner10} gave another proof of the result of Nguyen and Onak~\cite{NguyenOnak08} that the matching ratio in bounded-degree graphs is testable. Borel circle squaring?}
Define the \emph{Borel chromatic number} $\chi_{\mathcal B}(\GG)$ of a Borel graph $\GG$ to be the minimum $k\in\mathbb N$ such that  there is a Borel partition $V=V_1\cup\dots\cup V_k$
into \emph{independent} sets (that is, sets that span no edge of $E$). Also, the \emph{Borel chromatic index} $\chi'_{\mathcal B}(\GG)$ is  the smallest number of Borel matchings that partition~$E$. (By a 
\emph{matching} we understand a set of pairwise disjoint edges; we do not require that every vertex is covered.)
 For an illustration, consider the following example.

\begin{example}\label{ex:translation} Given $\alpha\in\mathbb R\setminus \mathbb Q$, let $\mathcal T_\alpha:=([0,1),\mathcal B,E)$ be the Borel graph on the unit real interval $[0,1)$ with edge-set $E:=\{\,\{x,x\pm\alpha\pmod 1\}:x\in [0,1)\,\}$. Also, let $\lambda$ denote the Lebesgue measure on $([0,1),\mathcal B)$.
\end{example}

This example  exhibits various interesting properties that contradict ``finite intuition''. Namely, $E$ defines a 2-regular and acyclic graph while the ergodicity of $x\mapsto x+2\alpha \pmod 1$ implies that every Borel vertex 2-colouring or every Borel matching misses a set of vertices of positive Lebesgue measure. Thus 
each of $\chi_{\mathcal B}(\mathcal T_\alpha)$ and $\chi'_{\mathcal B}(\mathcal T_\alpha)$ is strictly larger than~$\chi(\mathcal T_\alpha)=\chi'(\mathcal T_\alpha)=2$.

The following important result (which also extends to multi-graphs) shows that the upper bounds coming from simple greedy algorithms also apply in the Borel setting.

\begin{theorem}[Kechris et al~\cite{KechrisSoleckiTodorcevic99}]\label{th:KST} For every Borel graph $\GG$ of bounded maximum degree, we have that $\chi_{\mathcal B}(\GG)\le \Delta(\GG)+1$ and $\chi'_{\mathcal B}(\GG)\le  2\Delta(\GG)-1$.\qed
\end{theorem}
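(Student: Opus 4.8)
The plan is to reduce the edge-colouring bound to the vertex-colouring bound via the line graph, and to obtain the bound $\chi_{\mathcal B}(\GG)\le\Delta(\GG)+1$ by first producing a Borel proper colouring with \emph{finitely many} colours and then successively removing Borel maximal independent sets. For the reduction, fix a Borel linear order on $V$ (every standard Borel space admits one), identify each edge $\{x,y\}$ with the ordered pair $(x,y)$ having $x$ below $y$, so that $E$ becomes a Borel subset of $V\times V$ and is itself a standard Borel space, and let $L(\GG)$ be the Borel graph with vertex set $E$ in which two edges are adjacent when they are distinct and share an endpoint. A Borel $k$-colouring of $L(\GG)$ is the same thing as a partition of $E$ into $k$ Borel matchings, so $\chi'_{\mathcal B}(\GG)=\chi_{\mathcal B}(L(\GG))$; since every vertex of $L(\GG)$ has at most $2\Delta(\GG)-2$ neighbours, applying the bound $\chi_{\mathcal B}\le\Delta+1$ to $L(\GG)$ gives $\chi'_{\mathcal B}(\GG)\le 2\Delta(\GG)-1$. (For multi-graphs one argues identically, taking $E$ to be an abstract standard Borel space equipped with a Borel map to the endpoints.) So it remains to bound $\chi_{\mathcal B}(\GG)$.

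Write $d:=\Delta(\GG)$. Using the fixed Borel order on $V$, for $1\le i\le d$ define $f_i\colon V\to V$ by letting $f_i(x)$ be the $i$-th smallest neighbour of $x$ when $x$ has at least $i$ neighbours, and $f_i(x):=x$ otherwise; each $f_i$ is Borel, and for every $\{x,y\}\in E$ one has $y=f_i(x)$ with $i$ the rank of $y$ among the neighbours of $x$, so $\{x,y\}$ is an edge of the Borel graph $\GG_{f_i}:=\{\,\{x,f_i(x)\}: f_i(x)\ne x\,\}$. The key ingredient is the lemma that for an \emph{arbitrary} Borel function $f\colon X\to X$ the graph $\GG_f$ has Borel chromatic number at most $3$: its proof, which I expect to be the main obstacle (it is the only genuinely descriptive step), analyses the orbit structure of $f$, splitting off the Borel set of eventually $f$-periodic points — where the period together with a marker/toast argument gives a Borel $3$-colouring — from its complement, on which $\GG_f$ is a Borel forest carrying a distinguished ``forward'' direction and is again Borel $3$-colourable. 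Granting this, pick Borel $3$-colourings $c_1,\dots,c_d$ of $\GG_{f_1},\dots,\GG_{f_d}$; then $x\mapsto(c_1(x),\dots,c_d(x))$ is a Borel proper colouring of $\GG$ using at most $3^d$ colours, since an edge of $\GG$ is an edge of some $\GG_{f_i}$ and hence gets distinct values in coordinate $i$.

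To pass from finitely many colours to $d+1$, I would use that any Borel graph equipped with a Borel proper colouring by a finite set of colours has a Borel maximal independent set $I$, built greedily along the colour classes: put all vertices of the first colour into $I$, then all vertices of the next colour that have no neighbour already in $I$, and so on, each stage being a Borel condition; the resulting $I$ is independent and maximal. Now iterate: let $I_1$ be a Borel maximal independent set of $\GG$, let $I_2$ be a Borel maximal independent set of the graph induced on $V\setminus I_1$ (which again has a finite Borel colouring, by restriction), and continue. By maximality, every vertex outside $I_1\cup\dots\cup I_k$ has lost at least $k$ of its $\GG$-neighbours, so the graph induced on $V\setminus(I_1\cup\dots\cup I_k)$ has maximum degree at most $d-k$; in particular $V\setminus(I_1\cup\dots\cup I_d)$ is $\GG$-independent, and $I_1,\dots,I_d,\,V\setminus(I_1\cup\dots\cup I_d)$ is a Borel partition of $V$ into $d+1$ independent sets, which is exactly $\chi_{\mathcal B}(\GG)\le\Delta(\GG)+1$. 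Everything here apart from the $\GG_f$-lemma is a soft reduction.
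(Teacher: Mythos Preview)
The paper does not prove this theorem; it is quoted from \cite{KechrisSoleckiTodorcevic99} with a terminal \qed, so there is no paper-side argument to compare against. Your overall architecture --- produce some Borel proper colouring, build Borel maximal independent sets greedily along its colour classes, peel off $\Delta$ of them to reach $\Delta+1$ colours, and handle edges via the line graph --- is correct and is precisely the Kechris--Solecki--Todorcevic route.

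The gap is in your ``key ingredient''. The assertion that $\chi_{\mathcal B}(\GG_f)\le 3$ for an \emph{arbitrary} Borel $f$ is false: the shift $S(A)=A\setminus\{\min A\}$ on $[\mathbb N]^\omega$ is aperiodic, so $\GG_S$ is exactly a ``Borel forest carrying a distinguished forward direction'' in your sense, yet $\chi_{\mathcal B}(\GG_S)=\aleph_0$ (no finite Borel colouring exists, by Galvin--Prikry). So the step you single out as the main obstacle genuinely fails as stated. Your particular $f_i$ are bounded-to-one, and for such $f$ the bound $\chi_{\mathcal B}(\GG_f)\le 3$ does turn out to hold, but establishing it is not the soft orbit-analysis you sketch; the most transparent route is \emph{via} the bounded-degree theorem together with the KST dichotomy $\chi_{\mathcal B}(\GG_f)\in\{1,2,3,\aleph_0\}$, which makes your argument circular. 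The fix is to drop the $\GG_f$ detour: take a countable base $\{U_n\}$ for a compatible Polish topology on $V$ and colour $x$ by the least $n$ with $x\in U_n$ and $U_n$ disjoint from the finite set of $\GG$-neighbours of $x$. This is a Borel proper $\mathbb N$-colouring, and since your greedy MIS construction works verbatim from a countable (not just finite) colouring, your iteration then goes through unchanged.
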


Remarkably, the upper bounds of Theorem~\ref{th:KST} are best possible, even if we insist that each connectivity component is a tree and, for the Borel chromatic index, that
the graph comes with a Borel bi-partition with no edge inside a part:

\begin{theorem}[Marks~\cite{Marks16}] For every $d\ge 3$, there are $d$-regular acyclic Borel graphs $\GG$ and $\mathcal{H}$ such that
	\begin{enumerate}[(i),nosep]
		\item\label{it:Marks1}
		$\chi_{\mathcal B}(\GG)=d+1$,
		\item\label{it:Marks2} $\chi_{\mathcal B}(\mathcal{H})=2$ and $\chi'_{\mathcal B}(\mathcal{H})=2d-1$.\qed
	\end{enumerate}
\end{theorem}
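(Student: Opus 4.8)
My plan is to follow the \emph{determinacy method} of Marks~\cite{Marks16}. First one realises both graphs explicitly. Let $\Gamma = \mathbb{Z}/2\mathbb{Z} \ast \dots \ast \mathbb{Z}/2\mathbb{Z}$ be the free product of $d$ copies of the two-element group, with the standard involutive generators $s_1,\dots,s_d$; its left Cayley graph is the $d$-regular tree $T_d$. Write $\mathrm{Free}(2^\Gamma)$ for the free part of the Bernoulli shift action $\Gamma\curvearrowright 2^\Gamma$, a standard Borel space on which $\Gamma$ acts freely. Let $\GG$ be the Borel graph on $\mathrm{Free}(2^\Gamma)$ with edges $\{x,\, s_i\cdot x\}$: since the action is free and the $s_i$ are distinct involutions, $\GG$ is $d$-regular, and each connected component, being the Schreier graph of a free orbit, is isomorphic to $T_d$, so $\GG$ is acyclic. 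For $\mathcal H$, I would take the ``bipartite double'': vertex set $\mathrm{Free}(2^\Gamma)\times\{0,1\}$ with $(x,0)\sim(s_i\cdot x,1)$ for $i=1,\dots,d$. Again $\mathcal H$ is $d$-regular and acyclic (a non-backtracking closed walk would produce a non-trivial element of $\Gamma$, given by a reduced word, fixing a point of the free part, which is impossible), and $(x,\epsilon)\mapsto\epsilon$ is a Borel proper $2$-colouring, so $\chi_{\mathcal B}(\mathcal H)=2$. The upper bounds $\chi_{\mathcal B}(\GG)\le d+1$ and $\chi'_{\mathcal B}(\mathcal H)\le 2d-1$ are immediate from Theorem~\ref{th:KST}.

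All the work is in the matching lower bounds, proved by a common scheme. Suppose, for contradiction, that $c$ is a Borel proper vertex colouring of $\GG$ with $d$ colours (respectively, a Borel proper edge colouring of $\mathcal H$ with $2d-2$ colours). For each index $i$ one builds a two-player infinite game $\mathbb{G}_i$ in which the players alternately specify complementary parts of the shift-data of a point of $\mathrm{Free}(2^\Gamma)$ as one walks out along a ray of $T_d$, the winner being declared according to the pattern of values that $c$ takes along that ray. Since $c$ is Borel, the payoff set is a Borel subset of the Polish space of runs of $\mathbb{G}_i$, so by Martin's Borel determinacy theorem $\mathbb{G}_i$ is determined.

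One then analyses who wins. If the ``spoiler'' player has a winning strategy in $\mathbb{G}_i$ for \emph{every} $i$, one runs these $d$ strategies against one another, with moves indexed by the vertices (or the $\langle s_i\rangle$-cosets) of $T_d$, so that their outputs are mutually consistent and assemble into a single genuine point $x\in\mathrm{Free}(2^\Gamma)$; by the choice of payoffs, $c$ is then forced to give $x$ the same colour as one of its neighbours (respectively, to give two edges at a common vertex the same colour), contradicting properness. If instead the other player wins some $\mathbb{G}_i$, one composes that strategy with a shifted copy of itself to build, in the same fashion, a point exhibiting a colour conflict. Either way no such $c$ exists, so $\chi_{\mathcal B}(\GG)\ge d+1$ and $\chi'_{\mathcal B}(\mathcal H)\ge 2d-1$, which together with the upper bounds proves the theorem.

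The delicate point --- and what I expect to be the main obstacle --- is the precise design of the games $\mathbb{G}_i$. One must arrange simultaneously that (a) the winning condition reads off only countably much information from $c$ along a single ray, so that Borel determinacy genuinely applies; (b) the move structure is compatible with the group operation, so that strategies run in parallel never clash and really do glue into a well-defined element of the \emph{free} part of the shift --- this is where acyclicity of $T_d$ is used, and it is essentially the same phenomenon as the impossibility of a Borel ``parity'' selection on $\mathrm{Free}(2^\Gamma)$; and (c) the bookkeeping translating ``Player $X$ wins'' into an explicit monochromatic pair is tight enough to exclude exactly $d$ colours for $\GG$ and exactly $2d-2$ for $\mathcal H$, whereas a naive game would only rule out far fewer. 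Calibrating these thresholds is precisely what makes the greedy bounds of Theorem~\ref{th:KST} sharp.
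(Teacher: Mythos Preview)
The paper does not contain a proof of this statement: it is quoted as a result of Marks~\cite{Marks16} and closed with a \qed\ symbol, so there is nothing to compare your proposal against. The theorem is cited purely as context, to show that the greedy bounds of Theorem~\ref{th:KST} are sharp in the Borel category.

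That said, your sketch is a faithful outline of Marks' determinacy method for part~(i): the graph $\GG$ is indeed the Schreier graph of $(\mathbb{Z}/2\mathbb{Z})^{*d}$ acting on the free part of its Bernoulli shift, and the lower bound comes from Borel determinacy applied to suitably designed games, with the key step being that winning strategies for different generators can be played against one another to build a single point of $\mathrm{Free}(2^\Gamma)$ witnessing a conflict. You are right to flag the delicate points (the payoff design, and why the glued strategies land in the \emph{free} part), and these are exactly what occupies most of Marks' paper.

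For part~(ii) your construction of $\mathcal{H}$ as the bipartite double of $\GG$ is not quite what Marks does, and one should be a bit careful: since each component of $\GG$ is already the bipartite tree $T_d$, the bipartite double is, component by component, two disjoint copies of $T_d$, so one has to check that the determinacy argument for the edge-colouring bound genuinely transfers to this Borel structure rather than collapsing. Marks' own examples for~(ii) are built directly from free products of larger groups so that the edge-colouring game can be analysed without passing through a double cover. Your high-level plan is sound, but the specific realisation of $\mathcal{H}$ would need to be revisited if you were to write out a full proof.
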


Note that if $d=2$, then the Borel graph from Example~\ref{ex:translation} satisfies Property~\ref{it:Marks1} while a 2-regular Borel graph satisfying~\ref{it:Marks2} was earlier constructed by Laczkovich~\cite{Laczkovich88}.

%3COMMENT
\comm{
Rather surprisingly, Marks~\cite{Marks16} constructed, for every $d\ge 3$, an example of a $d$-regular Borel graph $\GG$ such that $\GG$ has no cycles, and
\begin{enumerate}[(a),nosep]
	\item\label{it:Marks1}
	$\chi_{\mathcal B}(\GG)=d+1$, or
	\item\label{it:Marks2} $\chi_{\mathcal B}(\GG)=2$ and $\chi'_{\mathcal B}(\GG)=2d-1$.
	\end{enumerate}
 If $d=2$, then the irrational rotation of a circle gives an easy example of a graph that satisfies Property~\ref{it:Marks1} while a 2-regular graph satisfying~\ref{it:Marks2} was constructed by Laczkovich~\cite{Laczkovich88}. Thus 
 the upper bounds of Theorem~\ref{th:KST} are best possible, even if we insist that each connectivity component is a tree and, for the Borel chromatic index, that
 the graph comes with a Borel bi-partition with no edge inside a part.
}
 
In many applications of Borel graphs, one can ignore a null-set with respect to some given measure. This motivates the following notions. 

\begin{definition}
 Given a Borel graph $\GG=(V,\mathcal B,E)$ and a probability measure $\mu$ on $(V,\mathcal B)$, let
  \begin{itemize}
  	\item the \emph{$\mu$-measurable chromatic number $\chi_{\mu}(\mathcal B)$} be the smallest integer $k$ for which there is a Borel 
  		partition $V=V_0\cup V_1\cup\dots\cup V_k$ such that $V_i$ spans no edge in $\GG$ for each $i\in [k]:=\{1,\dots,k\}$
  		while $\mu(V_0)=0$;
  		\item the \emph{$\mu$-measurable chromatic
  			index} $\chi'_{\mu}(\mathcal B)$ be the smallest integer $k$ for which there is a Borel 
  		partition $E=E_0\cup E_1\cup\dots\cup E_k$ such that $E_i$ is a matching for each $i\in [k]$
  		while $V(E_0)$, the set of vertices covered by $E_0$, has measure zero in~$\mu$.
  	\end{itemize}
\end{definition}

Clearly, by allowing some ``errors'' (restricted to $V_0$ and $E_0$ in the above definitions) we get greater flexibility and thus $\chi_\mu(\GG)\le \chi_{\mathcal B}(\GG)$ and $\chi'_\mu(\GG)\le \chi'_{\mathcal B}(\GG)$ for every probability measure $\mu$ on $(V,\mathcal{B})$. Conley, Marks and Tucker-Drob~\cite[Theorem~1.2]{ConleyMarksTuckerdrob16} showed that $\chi_\mu(\GG)\le \Delta(\GG)$ for every probability measure $\mu$, provided $\Delta(\GG)\ge3$ and $\GG$ does not contain a clique on $\Delta(\GG)+1$ vertices, thus proving a measurable version of Brooks' theorem~\cite{Brooks41}. (See also Bernshteyn~\cite[Theorem~3.4]{Bernshteyn20arxiv} for a strengthening of this result.) Note that the case $\Delta(\GG)=2$ is special because of e.g.\ Example~\ref{ex:translation} with respect to  the Lebesgue measure; see Conley et al~\cite[Theorem~1.6]{ConleyMarksTuckerdrob16} for a characterization of $(\GG,\mu)$ with $\chi_\mu(\GG)>\Delta(\GG)=2$.

Marks~\cite[Question 4.9]{Marks16} asked if a measurable version of Vizing's theorem holds for arbitrary Borel probability measures:

\begin{question}[Marks~\cite{Marks16}]
\label{qu:Marks}
 Is it true that, for every Borel graph $\GG=(V,\mathcal B,E)$ of bounded maximum degree and every probability measure $\mu$ on $(V,\mathcal B)$, we have
 \begin{equation}\label{eq:MarksQn}
 \chi'_{\mu}(\GG)\le \Delta(\GG)+1?
\end{equation} 
\end{question}

Marks proved~\cite[Theorem 4.8]{Marks16} that this is the case for $\Delta(\GG)=3$. (And it is not hard to show that 
(\ref{eq:MarksQn}) holds when $\Delta(\GG)\le 2$.)

An important case is when the probability measure $\mu$ is \emph{$E$-invariant} (that is, every Borel partial injective map $\phi;V\to V$ with $(x,\phi(x))\in E$ for all $x$ in the domain of $\phi$ preserves the measure $\mu$), in which case we call
the quadruple $\GG=(V,\mathcal B,E,\mu)$ a \emph{graphing}.
%maybe we should mention that for example in KechrisMiller they call a graphing any Borel graph that spans given countable Borel equivalence relation (without any measure)
For example, the quadruple $([0,1),\mathcal B, E, \lambda)$ from Example~\ref{ex:translation} is a graphing. Graphings appear in descriptive combinatorics (see e.g.\ the survey by Kechris and Marks~\cite{KechrisMarks:survey}), orbit equivalence (see e.g.\ the book by Kechris and Miller~\cite{KechrisMiller:toe}), measured group theory (see e.g.\ the surveys~\cite{Furman11,Gaboriau02,Gaboriau10,Kechris:gaega,Shalom05}), sparse graph limits (see e.g.\ the book by Lov\'asz~\cite[Part~4]{Lovasz:lngl}), and have connections to many other areas. In fact, the question whether~\eqref{eq:MarksQn} holds for every graphing $\GG=(V,\mathcal B,E,\mu)$ was earlier asked by Ab\'ert~\cite[Question~35]{Abert10questions}.

Cs\'oka, Lippner and Pikhurko~\cite[Theorem~1.5]{CsokaLippnerPikhurko16} proved that, for a graphing $\GG$ of bounded maximum degree, we have $\chi_{\mu}'(\GG)\le \Delta(\GG)+1$ when $\GG$ has no odd cycles and $\chi_{\mu}'(\GG)\le \Delta(\GG)+O(\Delta(\GG)^{1/2})$ in general. In a related result, Bernshteyn~\cite[Theorem~1.3]{Bernshteyn19am} proved that $\Delta(\GG)+o(\Delta(\GG))$ colours are enough for measurable edge-colouring (even for the so-called list-colouring version) provided that the graphing $\GG$ 
%comes from a shift action. 
factors to the shift action $\Gamma\curvearrowright [0,1]^\Gamma$ of a finitely generated group~$\Gamma$.

Our main result is to prove the best possible bound on measurable chromatic index of a general graphing in terms of its maximal degree. If fact, our proof also works when we allow multiple edges so we present this more general case. Namely, a \emph{Borel multi-graph} with multiplicity at most $\pi\in \mathbb{N}$ 
is a triple $(V,\mathcal B,E)$ where $(V,\mathcal B)$ is a standard Borel space and $E$ is a Borel subset of $[V]^2\times [\pi]$. For $\{x,y\}\in [V]^2$, we view the pairs $(\{x,y\},k)\in E$ as the parallel edges with end-points $x$ and $y$. A \emph{multi-graphing} is a quadruple $(V,\mathcal B,E,\mu)$, where $(V,\mathcal B,E)$ is a Borel multi-graph and $\mu$ is a probability measure on $(V,\mathcal B)$ which is invariant with respect to the projection of $E$ onto $[V]^2$ (that is, when we replace all parallel edges by one edge). In the obvious way, we define the (Borel or measurable) chromatic index, etc. 
In this notation, we can prove the following measurable version of Vizing's theorem.

\begin{theorem}[Main Result]
	\label{th:main} For every Borel multi-graphing $\GG=(V,\mathcal B,E,\mu)$ with bounded maximum degree, it holds that $\chi_{\mu}'(\GG)\le \Delta(\GG)+\pi(\GG)$.
\end{theorem}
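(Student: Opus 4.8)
The plan is to combine the classical Vizing fan/alternating-path argument with an exhaustion scheme that handles the measure-zero ``leftover'' set at each stage. I would prove the theorem by iteratively augmenting a partial Borel proper edge-colouring $c\colon E\to[\Delta+\pi]$ so that the set of uncoloured edges shrinks to a $\mu$-null set. The key point is that in a finite multi-graph, given a proper colouring with $\Delta+\pi$ colours and an uncoloured edge $e=\{x,y\}$, one can always recolour along a Vizing fan and a single alternating (two-colour) path to extend the colouring to $e$; this is a \emph{local} operation in the sense that the set of edges whose colour changes lies within a bounded-length path/fan rooted at $e$. The difficulty in the Borel/measurable setting is that alternating paths may be bi-infinite, so one cannot literally flip them; the standard fix, going back to \cite{CsokaLippnerPikhurko16} and used by Marks for $\Delta=3$, is to \emph{truncate} long alternating paths, accepting a small error, and then use the invariance of $\mu$ together with a mass-transport / averaging argument to show the total measure of vertices that remain ``bad'' after each round is controlled and tends to zero.

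Concretely, I would first fix a Borel proper edge-colouring of $E$ with $2\Delta-1$ colours (Theorem~\ref{th:KST}), together with a Borel ordering tools (e.g.\ a maximal Borel independent set hierarchy, or a Borel proper vertex colouring from Theorem~\ref{th:KST} used to schedule ``rounds'') so that in each round an appropriately sparse Borel set of uncoloured edges can be processed \emph{simultaneously} and independently. Second, for a parameter $L\in\mathbb N$, I would perform the Vizing augmentation at each such edge but only flip alternating paths of length $\le L$; edges whose augmenting path is longer than $L$, or which conflict with a simultaneously-processed nearby edge, are declared failures and kept uncoloured for the next round. Third, and this is where invariance is essential, I would bound the $\mu$-measure of the set of vertices incident to a still-uncoloured edge after round $n$: each failure at scale $L$ can be ``charged'' to an endpoint of a long monochromatic-for-two-colours path, and since $\mu$ is $E$-invariant such paths carry mass that decays (roughly geometrically or at least to $0$) as $L\to\infty$, by a Borel--Cantelli / ergodic-theoretic estimate analogous to the ones in \cite{CsokaLippnerPikhurko16,Bernshteyn19am}. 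Iterating with $L=L_n\to\infty$ chosen fast enough, the uncoloured set converges in measure to a null set $E_0$, and its vertex set $V(E_0)$ is $\mu$-null, which is exactly what $\chi'_\mu(\GG)\le\Delta+\pi$ demands.

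The multi-graph refinement costs only $\pi$ extra colours rather than $1$: in the fan construction one must accommodate up to $\pi$ parallel edges at each pair, and the bound $2(\Delta-1)+\pi$ (not $2\Delta-1$) of forbidden colours at a partially-coloured edge in the multi-graph case, together with Gupta's version of the fan argument, yields $\Delta+\pi$ as the analogue of $\Delta+1$; I would simply run the whole scheme above with $\Delta+\pi$ colours throughout and cite the finite multi-graph Vizing--Gupta fan lemma as a black box for the local recolouring step.

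I expect the main obstacle to be the quantitative decay estimate in the third step: one must show that the measure of vertices sitting on the ``wrong end'' of an augmenting structure of length $> L$ can be made arbitrarily small uniformly, and controlling how errors from one round propagate into the next (a recoloured edge near a future failure) requires a careful choice of the round schedule (Borel colouring of an auxiliary bounded-power graph) and a union bound over the finitely many colour pairs and bounded-radius fan shapes. Making the iteration actually converge — rather than merely not increase the error — is the technical heart, and is presumably where the bulk of the paper's work lies.
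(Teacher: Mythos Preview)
Your plan correctly identifies the overall architecture --- iterative augmentation along Vizing-type chains scheduled by a Borel colouring of a bounded-power graph, followed by a Borel--Cantelli argument to get a.e.\ convergence of the sequence of partial colourings --- and you rightly flag convergence as ``the technical heart''. However, the tool you propose (a single Vizing fan plus one truncated alternating path) is quantitatively too weak to make that heart beat.

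Here is the arithmetic obstruction. Suppose $c$ is a proper partial colouring such that every Vizing chain $V_c(x,e)$ at an uncoloured edge has length at least $L$. A double-counting (mass-transport) argument using invariance of $\nu=\hat\mu$ on $E$ gives
\[
\nu(U_c)\ \le\ \frac{C(\Delta,\pi)}{L},
\]
since each coloured edge lies in at most $O_{\Delta,\pi}(1)$ Vizing chains; this is exactly Propositions~\ref{EstimateSimpleVizing}--\ref{EstimateSimpleVizingDown} and Proposition~\ref{MainEstimate}\ref{it:A}. On the other hand, passing from $c$ to the next colouring by augmenting along chains of length at most $L$ recolours a set of edges of measure at most $O(L)\cdot\nu(U_c)$. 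With $L=L_n$ growing, the Borel--Cantelli sum you need is bounded by a constant times
\[
\sum_n L_{n+1}\cdot\frac{1}{L_n},
\]
and there is no choice of $L_n\to\infty$ making this converge: the product $L\cdot(1/L)$ is of constant order. Thus with single Vizing chains you can drive $\nu(U_c)\to 0$ --- this is essentially the paper's \emph{approximate} result, Theorem~\ref{th:ap}, and indeed Section~\ref{sec:QuasiInvariant} uses only $\mathcal H_c$ --- but you cannot obtain a.e.\ convergence of the colourings, which Theorem~\ref{th:main} requires. (Your hoped-for ``geometric'' decay does not occur; the decay is only $1/L$.)

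The paper's new ingredient closing this gap is the \emph{iterated Vizing chain} of Section~\ref{ItVizingChain}: from each uncoloured edge $e$ one grows a fan and a first alternating path $P_c(x,e)$, and then from each of $\Omega(L)$ ``superb'' edges $f$ along that path one grows a \emph{second} fan and a second alternating path $P_c(x,e\leadsto f)$. The crucial counting fact (Propositions~\ref{EstimateVizing} and~\ref{EstimateVizingDown}) is that each coloured edge still lies in only $O_{\Delta,\pi}(1)$ iterated chains, while each uncoloured edge now reaches $\Omega(L^2)$ coloured edges. Hence if $c$ cannot be iteratively improved in $L$ steps one gets $\nu(U_c)\le C(\Delta,\pi)/L^2$ (Proposition~\ref{MainEstimate}\ref{it:B}), and with $L_n=2^n$ the Borel--Cantelli sum is at most a constant times $\sum_n 2^n/2^{2n}<\infty$. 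This quadratic-versus-linear decay is precisely the idea missing from your proposal; the scheduling and Borel bookkeeping you describe are correct and match the paper.
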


%For finite graphs, Shannon's theorem is an easy consequence of Vizing's theorem and the same applies to the measurable versions. 

One application of measurable versions of Vizing's theorem was observed by Cs\'oka et al~\cite{CsokaLippnerPikhurko16}. Namely, consider the smallest $k=k(d)$ such that for every graphing $\GG=(V,\mathcal B,E,\mu)$ with $\Delta(\GG)=d$ there are invertible measure-preserving maps $\phi_i:A_i\to B_i$ with $A_i$ and $B_i$ in the completion of $\mathcal B$ with respect to $\mu$ for $i=1,\dots,k$ such that 
$$
 E=\{\{x,y\}\in [V]^2:  \exists\, i\in [k]\ \phi_i(x)=y\mbox{ or } \phi_i(y)=x\}.
$$
Also, consider the function $k'(d)$ whose definition is the same as for $k(d)$ except the maps $\phi_1,\dots,\phi_k$ are additionally required to be involutions. Cs\'oka et al~\cite[Theorem~8.3]{CsokaLippnerPikhurko16} showed that  $(d+1)/2\le k(d)\le d/2+C\sqrt d$ and  $d+1\le k'(d)\le d+C\sqrt d$ for some constant $C$ and all $d\ge 2$. (Note that trivially $k(1)=k'(1)=1$.) Using Theorem~\ref{th:main}, we can improve the upper bounds as follows, in particular determining these functions exactly, except $k(d)$ for odd~$d\ge 3$ when there are two possible values.

 \begin{theorem}\label{th:k} For every $d\ge 2$, we have $k'(d)\le d+1$ and $k(d)\le \lceil(d+2)/2\rceil$. \end{theorem}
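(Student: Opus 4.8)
The plan is to deduce the two bounds directly from Theorem~\ref{th:main} applied to an auxiliary multi-graphing. First consider the bound $k'(d)\le d+1$. Given a graphing $\GG=(V,\mathcal B,E,\mu)$ with $\Delta(\GG)=d$, apply Theorem~\ref{th:main} (with $\pi(\GG)=1$, since $\GG$ is a graph) to obtain a Borel partition $E=E_0\cup E_1\cup\dots\cup E_{d+1}$ where each $E_i$ with $i\in[d+1]$ is a Borel matching and the vertex set $V(E_0)$ is $\mu$-null. For each $i\in[d+1]$, a Borel matching $E_i$ naturally defines an involution $\phi_i$: set $A_i=B_i=V(E_i)$ and let $\phi_i$ swap the two endpoints of each edge of $E_i$ (and, to be safe, declare $\phi_i$ the identity outside $V(E_i)$, or simply work on the domain $V(E_i)$). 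Since $E_i$ is a matching this is a well-defined Borel involution, and since $\mu$ is $E$-invariant it is measure-preserving. The only edges not captured are those in $E_0$, but $V(E_0)$ is $\mu$-null, so after discarding it (equivalently, after passing to the $\mu$-completion and modifying the $A_i,B_i$ on a null set) the involutions $\phi_1,\dots,\phi_{d+1}$ generate exactly~$E$. Hence $k'(d)\le d+1$.

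For the bound $k(d)\le\lceil(d+2)/2\rceil$, the idea is that a general measure-preserving map can encode \emph{two} matchings at once, so we want to edge-colour $\GG$ with roughly $d/2$ colour classes, each being a union of two matchings, i.e.\ a subgraph of maximum degree~$2$; such a subgraph is a disjoint union of paths and cycles, which a single invertible map can realise. Concretely, group the colour classes from an edge-colouring into pairs. To get the right count we should colour a suitable auxiliary multi-graph: replace $\GG$ by the multi-graphing $\GG'$ on the same vertex set in which every edge of $\GG$ is doubled, so $\Delta(\GG')=2d$ and $\pi(\GG')=2$. By Theorem~\ref{th:main}, $\chi'_\mu(\GG')\le 2d+2$, giving a Borel partition of the (doubled) edge set into $2d+2$ matchings plus a null part; pairing up the parallel copies appropriately, or rather pairing the $2d+2$ matchings into $d+1$ classes each of maximum degree~$2$ in $\GG$, and then noting that one can do slightly better by a parity/rounding argument, one arrives at $\lceil(d+2)/2\rceil$ maps. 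Each resulting degree-$\le 2$ Borel subgraph decomposes in a Borel way into its path- and cycle-components, and on each component one chooses an orientation (consistently, using a Borel linear order on $V$ or a Borel edge-colouring of the degree-$\le2$ graph, which exists by Theorem~\ref{th:KST} with only $3$ colours) to define an invertible measure-preserving partial map $\phi_i$ whose "connect $x$ to its successor" relation is exactly that subgraph. Discarding the $\mu$-null leftover as before yields $k(d)\le\lceil(d+2)/2\rceil$.

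The main obstacle I expect is the bookkeeping in the $k(d)$ bound: one must get the constant exactly right (so that $2d+2$ matchings really collapse to $\lceil(d+2)/2\rceil$ invertible maps and not one more), and one must orient each degree-$\le 2$ Borel subgraph in a Borel, and on a conull set genuinely path-/cycle-consistent, way so that a single measure-preserving bijection captures both of its constituent matchings. The orientation step is where care is needed: on a two-way infinite path there is no canonical "first" vertex, so one cannot just walk from an endpoint; instead one fixes a Borel $\mathbb Z$-valued or $\mathbb Z_3$-valued labelling along each path/cycle (e.g.\ via a Borel proper $3$-edge-colouring of the degree-$\le 2$ subgraph, which partitions it into at most $3$ Borel matchings) and uses it to choose the successor consistently; for even cycles this is clean, and odd cycles or the null exceptional set are absorbed into~$E_0$. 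Once the orientation lemma is in place, invariance of $\mu$ immediately gives that every $\phi_i$ is measure-preserving, and the rest is routine.
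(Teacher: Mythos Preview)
Your argument for $k'(d)\le d+1$ is correct and matches the paper's.

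For $k(d)$ there are two problems. First, the detour through the doubled multi-graph $\GG'$ is a red herring: applying Theorem~\ref{th:main} to $\GG'$ yields $2d+2$ matchings of the doubled graph, each of which is still just a matching of~$\GG$, so nothing is gained; and ``pairing the $2d+2$ matchings into $d+1$ classes'' overshoots the target $\lceil(d+2)/2\rceil$ badly. What you actually want is to take the $d+1$ matchings $E_1,\dots,E_{d+1}$ produced by Theorem~\ref{th:main} for $\GG$ itself and group them into $\lceil(d+1)/2\rceil\le\lceil(d+2)/2\rceil$ subgraphs of maximum degree~$2$.

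The real gap is the orientation step. A Borel proper $3$-edge-colouring of a bi-infinite path does \emph{not} produce an orientation: the colour pattern $\ldots,1,2,1,2,\ldots$ is reflection-symmetric, so no ``successor'' can be read off. Worse, a union $H=M_1\cup M_2$ of two measurable matchings need not be measurably orientable at all. On a bi-infinite component, orienting $H$ (choosing $\phi$ with out-degree~$1$) amounts to a measurable proper $2$-\emph{vertex}-colouring $\epsilon$ of $H$ via $\phi(v)=\iota_{\epsilon(v)}(v)$, where $\iota_j$ is the involution encoding~$M_j$. Take $X=[0,1)$ with Lebesgue measure, $\iota_1(x)=-x$ and $\iota_2(x)=\alpha-x\pmod 1$ for irrational~$\alpha$; these are measure-preserving involutions, $M_1,M_2$ are disjoint Borel matchings, and $H=M_1\cup M_2$ is a $2$-regular graphing with every component a bi-infinite line (since $\iota_1\iota_2$ is the irrational rotation by $-\alpha$). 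Any $\epsilon$ with $\epsilon\circ\iota_j=3-\epsilon$ is $(\iota_1\iota_2)$-invariant, hence a.e.\ constant by ergodicity, contradicting $\epsilon\circ\iota_1=3-\epsilon$. So no measurable orientation exists. (Incidentally, a union of two matchings has no odd cycles, so that case never arises.)

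The paper sidesteps this obstruction: it invokes the proof of \cite[Theorem~8.3]{CsokaLippnerPikhurko16}, which shows that the union of the $d+1$ Borel matchings can be partitioned into $\lceil(d+2)/2\rceil$ Borel subgraphs whose connectivity components are \emph{finite} paths and cycles on a $\mu$-conull $\GG$-invariant set. Once the components are finite, orienting each one in a Borel way is trivial, and the null leftover is handled by the Axiom of Choice. That reduction to finite components is the missing ingredient in your sketch.
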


One consequence of Theorem~\ref{th:k} is that every graphing with maximum degree $d$ admits a measurable orientation of edges with every out-degree at most $\lceil(d+2)/2\rceil$.
% (while a graphing with all degrees equal to $d$ shows that one cannot go lower than $\lceil d/2\rceil$). 
See Thornton~\cite{Thornton20arxiv} for a detailed study of questions of this type.
 
Unfortunately, we could not extend our proof of Theorem~\ref{th:main} to apply to non-invariant probability measures and Question~\ref{qu:Marks}, as stated, remains open for every $d\ge 4$. However, we could prove the analog of Theorem~\ref{th:main} for the following relaxation of the measurable chromatic index.
For a Borel multi-graph $\GG=(V,\mathcal B,E)$ and a probability measure $\mu$ on $(V,\mathcal B)$, the \emph{$\mu$-approximate chromatic index $\chi_{AP,\mu}'(\GG)$} is the smallest integer $k$ such that
for every $\epsilon>0$ there is a Borel set $A\subseteq V$ such that $\mu(V\setminus A)\le \epsilon$ and $\chi_{\mathcal B}'(\GG\upharpoonright A)\le k$,  where $\mathcal{G}\upharpoonright A$ denotes the Borel multi-graph on $A$ with the edge set $E\cap ([A]^2\times [\pi(\mathcal{G})])$.

For example, we have $\chi_{AP,\lambda}(\mathcal T_\alpha)=2$, where $T_\alpha$ is as in Example~\ref{ex:translation}.
% and $\lambda$ denotes the Lebesgue measure on $[0,1]$. 
(Indeed, for any $\epsilon>0$, the restriction of $\mathcal T_\alpha$ to $[0,1-\epsilon)$ has finite connectivity components and thus can be easily 2-coloured in a Borel way.) 

Bernshteyn~\cite[Theorem~1.5]{Bernshteyn19am} proved that $\chi_{AP,\mu}'(\GG)\le (1+o(1))\,\Delta(\GG)$
for every Borel graph $\GG$ and a probability measure $\mu$ on it. Here we strengthen his upper bound as follows (also extending it to multi-graphs).

\begin{theorem}\label{th:ap} Let $\GG=(V,\mathcal B,E)$ be a Borel multi-graph and let $\mu$ be a probability measure (not necessarily $E$-invariant) on $(V,\mathcal B)$. Then $\chi_{AP,\mu}'(\GG)\le \Delta(\GG)+\pi(\GG)$.
\end{theorem}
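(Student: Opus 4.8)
The plan is to reduce the approximate measurable statement to the finite Vizing theorem via a suitable Borel structuring of large pieces of the graph. The first step is to use the fact that, after removing an $\epsilon$-fraction of vertices, a bounded-degree Borel (multi-)graph can be decomposed in a Borel way into pieces that are manageable. Concretely, I would invoke a hyperfiniteness-type argument: since $\GG$ has bounded degree, the underlying Borel equivalence relation $E_\GG$ (connectedness) need not be hyperfinite in general, but we do not need full hyperfiniteness. Instead, the key tool is a Borel version of the fact that one can find, for every $\delta>0$, a Borel set $A$ with $\mu(V\setminus A)\le \epsilon$ such that $\GG\upharpoonright A$ has \emph{finite connected components}, whenever $\GG$ has a Borel toast structure or, more simply here, by iterating the Luzin--Novikov selection together with a Borel sequential compactness/marker argument. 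Actually the cleanest route: cover $E$ by countably many Borel matchings $M_1,M_2,\dots$ (possible by Theorem~\ref{th:KST}, which gives $\chi'_{\mathcal B}(\GG)\le 2\Delta-1<\infty$), pick $N$ large, and set $A$ to be $V$ minus the vertices touched by $M_j$ for $j>N$; but this does not control $\mu(V\setminus A)$ without invariance. So I would instead argue as follows.

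Fix $\epsilon>0$. Choose $n\in\mathbb N$ large (to be specified) and, using the Borel edge colouring of Theorem~\ref{th:KST} with $2\Delta-1$ colours, together with a Borel marker/hyperfiniteness argument for the bounded-degree graph $\GG$, produce a Borel set $A\subseteq V$ with $\mu(V\setminus A)\le\epsilon$ such that every connected component of $\GG\upharpoonright A$ has at most $n$ vertices. (This is the standard fact that bounded-degree Borel graphs admit, off an arbitrarily small set in any given measure, a Borel decomposition into finite components — it follows from Borel hyperfiniteness of the induced relation after removing a small set, or more directly from a greedy Borel maximal-independent-set argument applied to a high power of $\GG$.) Now on each finite component $C$ of $\GG\upharpoonright A$, the restricted multi-graph has maximum degree $\le\Delta(\GG)$ and maximum multiplicity $\le\pi(\GG)$, so by the classical Vizing theorem~\cite{Vizing64,Gupta66} it has a proper edge colouring with $\Delta(\GG)+\pi(\GG)$ colours. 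The final step is to assemble these per-component colourings into a single Borel colouring of $\GG\upharpoonright A$: since the components are Borel and finite, one fixes a Borel linear order on a transversal, enumerates each component in a Borel way, and chooses the lexicographically least proper $(\Delta+\pi)$-colouring of each component — this selection is Borel because "being a proper colouring of this finite component" is a Borel condition and the component, with its vertex enumeration, is a Borel function of the transversal point. Hence $\chi'_{\mathcal B}(\GG\upharpoonright A)\le\Delta(\GG)+\pi(\GG)$, and since $\epsilon$ was arbitrary, $\chi'_{AP,\mu}(\GG)\le\Delta(\GG)+\pi(\GG)$.

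The main obstacle I anticipate is the first step: producing the Borel set $A$ with small $\mu$-complement whose induced subgraph has finite (or at least uniformly bounded) components, \emph{without} any invariance assumption on $\mu$. Without invariance one cannot use the usual measure-preserving marker arguments, but one can still appeal to the fact that for a bounded-degree Borel graph the connectedness equivalence relation, restricted to a $\mu$-conull set, is $\mu$-hyperfinite for \emph{any} Borel probability measure $\mu$ — no, this is false in general (e.g.\ Marks-type examples), so instead the correct statement to use is weaker and sufficient: for every $\epsilon>0$ there is a Borel $A$ with $\mu(V\setminus A)\le\epsilon$ and $\GG\upharpoonright A$ of bounded component size. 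This follows by iterating a Borel maximal independent set construction in powers of $\GG$ to carve out "buffer" vertices, and then bounding, via a union bound over the finitely many buffer layers, the $\mu$-mass removed; one must be slightly careful that the buffer can be chosen with small mass, which is where a pigeonhole over $n$ possible "phases" of the marker construction is used. Once this structural lemma is in hand, the remaining steps — applying finite Vizing component-wise and making the choice Borel — are routine.
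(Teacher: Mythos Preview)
Your proposal has a genuine gap at precisely the point you flagged as the main obstacle. The structural lemma you want --- that for every Borel multi-graph $\GG$ of bounded degree, every Borel probability measure $\mu$, and every $\epsilon>0$ there is a Borel $A\subseteq V$ with $\mu(V\setminus A)\le\epsilon$ and $\GG\upharpoonright A$ having finite (or uniformly bounded) connected components --- is \emph{false}. It is equivalent to saying that the connectedness equivalence relation $F_\GG$ is $\mu$-hyperfinite, and this fails already for invariant (hence quasi-invariant) measures: take the Schreier graph of the free part of the Bernoulli shift $F_2\curvearrowright 2^{F_2}$ with the product measure. This is a $4$-regular Borel graph whose connectedness relation is not $\mu$-hyperfinite (it is not $\mu$-amenable, and by Connes--Feldman--Weiss these coincide for quasi-invariant $\mu$). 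Your ``pigeonhole over $n$ phases of a marker construction'' sketch cannot rescue this; that style of argument is exactly what proves hyperfiniteness for amenable graphs and is known not to extend beyond that setting.

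The paper's proof is entirely different and does not attempt to reduce to finite components. It passes from $\mu$ on $V$ to an $\mathcal{E}$-quasi-invariant measure $\nu$ on $E$ (Propositions~\ref{MeasureHat}, \ref{BasicQuasi}, \ref{pr:AppMeasureOnEdges}) and then works with the Radon--Nikodym cocycle $\rho$. Given $L$, it constructs by transfinite recursion (Proposition~\ref{NoImprovementQuasi}) a proper partial Borel colouring $c$ such that for $\nu$-a.e.\ uncoloured edge $e$ and each $x\in e$, the Vizing chain $V_c(x,e)$ has $\rho$-weight at least $L$. The double-counting bound $\deg_{\mathcal H_c}(f)\le(\Delta+\pi)^4$ from Proposition~\ref{EstimateSimpleVizing}, integrated against the cocycle identity, then gives $L\,\nu(U_c)\le(\Delta+\pi)^4$. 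Since $L$ is arbitrary, $\nu(U_c)$ can be made smaller than any $\epsilon$. The essential combinatorial input is the Vizing-chain machinery of Section~\ref{sec:CountMult}, not any component-finiteness.
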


By building upon some new combinatorial ideas of this paper (namely, interated Vizing's chain introduced in Section~\ref{ItVizingChain}), Bernshteyn~\cite{Bernshteyn20arxiv2} 
presented a deterministic distributed algorithm that finds a proper $(\Delta(G)+1)$-edge-colouring of an $n$-vertex graph $G$ in polynomially many in $\Delta(G)$ and $\log n$ rounds, solving an important open problem in distributed algorithms. So, rather remarkably, a concept developed for definable graphs turned out to be useful in computer science.\medskip

The paper is organized as follows.
Section~\ref{sec:CountMult} is the main technical part of the paper and develops the combinatorial theory of augmenting chains in multi-graphs needed for our measurable results.
In Section~\ref{DefinableMultigraphs} we state several equivalent versions of the definitions made in the Introduction that are more suitable for the final proof, recall several standard constructions and show that the results from Section~\ref{sec:CountMult} can be applied in the definable context.
Finally in Section~\ref{sec:Invariant} (resp.\ Section~\ref{sec:QuasiInvariant}) we combine the previous results to prove Theorems~\ref{th:main} and \ref{th:k} (resp.\ Theorem~\ref{th:ap}.)\medskip

In terms of notation, let us point that $\mathbb N:=\{0,1,2,\dots\}$ contains 0 and that the range of integer indices like $i,j,k$ starts with $0$ unless stated otherwise. Also, recall that $[k]:=\{1,\dots,k\}$.

\section{Countable Multi-Graphs}\label{sec:CountMult}

This section is the combinatorial core of our arguments.
For better readability we decide to divide it into several subsections.
Our goal is to describe a way how to modify a given partial edge colouring of $G$ to a better one (in the sense that it contains fewer uncoloured edges).
More concretely, the construction depends on a given uncoloured edge to which we assign a sequence of edges, that we call (iterated) Vizing's chain, along which we do the improvement.
The assignment is done in such a way that every coloured edge can be a member only of constantly many (iterated) Vizing's chains (constant in $\Delta(G)+\pi(G)$ that is absolute for every partial edge colourings of $G$) while the number of edges that one uncoloured edge must ``see'' in its corresponding (iterated) Vizing's chain(s) will depend on a natural number parameter $L$ that we assign to a given colouring.
Roughly speaking, this parameter measures how difficult it is to modify the given colouring to a better one and the bigger is this number the smaller is the ratio of uncoloured/coloured edges.

In Subsection~\ref{BasicChain} we introduce the main definitions and notation that will be used throughout this section.
Subsections~\ref{AlterPath}, \ref{MultiFan}, \ref{VizingChain} and~\ref{ItVizingChain} describe how to construct the (iterated) Vizing chain for a given uncoloured edge. In Subsections~\ref{SuperbEdges} and~\ref{Estimates}, we properly define and compute the ratio between coloured and uncoloured edges that was mentioned  above.

Even though we only handle one single connected (and thus countable) multi-graph $G$ in this section, the reader may realize that the local and algorithmic nature of our constructions and definitions imply that these are in fact Borel when we work with a Borel multi-graph $\mathcal{G}$ instead of $G$.
This is made precise in Subsection~\ref{HcBorel}.

\subsection{Augmenting chains}\label{BasicChain}

Let $G=(V,E)$ be a connected multi-graph with all degrees bounded by $\Delta\in \mathbb{N}$ and edge multiplicity bounded by $\pi\in \mathbb{N}$. Thus $E$ is a subset of $[V]^2\times [\pi]$ and $\deg_G(x)\le \Delta$ for every $x\in V$, where the \emph{degree} $\deg_G(x):=|\{(\{x,y\},k)\in E\}|$ is the number of edges that contain~$x$,  counted with their multiplicities. Clearly, the vertex set $V$ is countable.

We use letters $x,y,\dots$ (resp.\  $e,f,\dots$) when we speak about vertices (resp.\ edges) of $G$.
We slightly abuse the notation and write 
\begin{itemize}
	\item $x\in f$ if one vertex of $f$ is $x$ (that is, if $f=(\{x,y\},k)$ for some $y\in V$ and $k\in[\pi]$), 
	\item $\{x,y\}=f$ if the two vertices that form $f$ are $x,y$ (that is, if $f=(\{x,y\},k)$  for some $k\in[\pi]$),
	\item $f\cap e\not=\emptyset$ if there is $x\in V$ such that $x\in f$ and $x\in e$.
	\end{itemize}

 Let $N(x)\subseteq E$ be \emph{the edge neighbourhood of $x$}, i.e., $N(x)$ consists of those $f\in E$ such that $x\in f$.

A {\it chain} is a sequence $P=(e_0,\dots )$ of edges of $G$ such that for every index $i\in \mathbb{N}$ with $e_i,e_{i+1}$ being in $P$ we have $e_i\cap e_{i+1}\not=\emptyset$, that is, every two consecutive edges in $P$ intersect. 
Let $l(P)=|P|$ denote the \emph{length} of the chain $P$, i.e., the number of edges in $P$. 
Note that a chain can be finite (possibly empty) or infinite; thus $l(P)\in \mathbb{N}\cup\{\infty\}$ and, if $P$ is finite, then $P=(e_0,\dots, e_{l(P)-1})$. The convention of labeling the first edge as $e_0$ allows us to write $P=(e_i)_{i<l(P)}$, regardless of whether $P$ is finite or not.
If $l(P)=\infty$, then  we define $l(P)-1=\infty$ in order to avoid case by case statements in several places.

We call $e_{i-1}$ the \emph{$i$-th} edge of $P$.
For an edge $f$ that occurs exactly once in $P$, let its \emph{index} $i(f)$ be $i\ge 1$ such that $f=e_{i-1}$, that is, the index of the $i$-th edge is $i$. Also, for $i\le l(P)$, let $P_i:=(e_j)_{j<i}$ denote the \emph{$i$-th prefix} of~$P$ (which consists of the first $i$ edges from $P$). We have, for example, that $P_{l(P)}=P$. 
For chains $P$ and $Q$, we write $P\sqsubseteq Q$ if $P=Q_{l(P)}$, that is, $P$ is a prefix of~$Q$.
If $P$ is a finite chain with the last edge $e$ and $Q$ is a chain with the first edge $f$ and $e\cap f\not=\emptyset$, then we write $P^\frown Q$ for the chain that is the concatenation of $P$ and~$Q$.

Let us call a chain $P=(e_i)_{i<l(P)}$ a \emph{path} if $P$ is empty, or if every vertex $z\in V$ belongs to at most 2 edges from $P$ and there is a vertex that belongs only to $e_0$. (In other words, $P$ is a finite path with a fixed direction or an infinite one-sided ray, where no self-intersections are allowed.) Also, a chain  $P$ is called a \emph{cycle} if $P$ is non-empty and every vertex belongs to 0 or 2 edges of $P$. (These are just finite cycles, having some edge and direction fixed.)

When we write $f;A\to B$ we mean that $f$ is a \emph{partial function} from $A$ to $B$, that is, a function from some subset $\dom(f)$ of $A$ to $B$. Its \emph{range} is 
$$\operatorname{rng}(f):=\{f(a): a\in \dom(f)\}.$$
 A {\it partial (edge) colouring} of $G$ is a partial function $c;E\to [\Delta+\pi]$, where $[\Delta+\pi]$ is a set of colours of size $\Delta+\pi$. Usually, we denote colours by small Greek letters, $\alpha,\beta,\dots$, etc. We assume that there is some given ordering of the colours and whenever we need to chose one of the colours we always chose the minimal possibility. A partial colouring $c$ is called \emph{proper} if every two distinct edges $e,f\in \dom(c)$ with $e\cap f\not=\emptyset$ get distinct colours, that is, $c(e)\not=c(f)$.

Let $c$ be some proper partial colouring.
We say that $c$ is {\it full} if $\dom(c)=E$, that is, every edge is coloured.
We write $U_c:=E\setminus \dom(c)$ for the set of \emph{uncoloured} edges.
Also, for $x\in V$, let 
$$
 m_c(x):=[\Delta+\pi]\setminus \{c(e):e\in N(x)\cap \dom(c)\}
 $$ 
 be the set of colours that are {\it missing} at~$x$.

\begin{claim}\label{BiggerThanMultiplicity}
We have $|m_c(x)|\ge \pi$ for every $x\in V$.
\end{claim}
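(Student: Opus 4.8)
The plan is to bound the number of colours used on edges at $x$ and subtract from the total palette size $\Delta+\pi$. First I would note that the colours appearing in $N(x)\cap\dom(c)$ come from the coloured edges incident to $x$, of which there are at most $\deg_G(x)\le\Delta$; hence
\[
 |\{c(e):e\in N(x)\cap\dom(c)\}|\ \le\ |N(x)\cap\dom(c)|\ \le\ \deg_G(x)\ \le\ \Delta .
\]
Therefore $|m_c(x)|=(\Delta+\pi)-|\{c(e):e\in N(x)\cap\dom(c)\}|\ge(\Delta+\pi)-\Delta=\pi$, which is exactly the claim.

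The only subtlety worth spelling out is that the cardinality estimate $|N(x)\cap\dom(c)|\le\deg_G(x)$ is immediate from the definition $\deg_G(x)=|\{(\{x,y\},k)\in E\}|=|N(x)|$ (edges counted with multiplicity), so I would just invoke that. No case analysis on multiplicities is needed because parallel edges at $x$ already contribute separately to $\deg_G(x)$, and a properly coloured set of such parallel edges would in fact get distinct colours, but we do not even need that refinement here — the crude bound by $\deg_G(x)$ suffices.

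I do not anticipate any real obstacle; this is a one-line counting argument. The only thing to be careful about is not to over-claim: we are using $|[\Delta+\pi]|=\Delta+\pi$ and $\deg_G(x)\le\Delta$, both of which are part of the standing assumptions on $G$ recorded at the start of Subsection~\ref{BasicChain}. So the proof is simply: the coloured edges at $x$ number at most $\Delta$, they forbid at most $\Delta$ colours, and $\pi$ colours of the $\Delta+\pi$ available therefore remain missing at $x$.
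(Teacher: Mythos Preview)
Your proof is correct and follows essentially the same approach as the paper: bound the number of colours used at $x$ by $\deg_G(x)\le\Delta$ and subtract from the palette size $\Delta+\pi$. The paper's version is terser but the argument is identical.
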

\begin{proof}
There are at most $\Delta$ colours used at the vertex $x$ since $\deg_G(x)\le \Delta$.
Therefore the  number of the remaining colours must be at least~$\pi$.
\end{proof}

Next, given a proper partial colouring $c;E\to [\Delta+\pi]$, we are going to define various useful properties of a chain, each being stronger than the previous one, as follows. 

\begin{definition}\label{cAdmiss}
We say that a chain $P=(e_i)_{i<l(P)}$ is 
\begin{enumerate}[(a),nosep]
\item \emph{edge injective} if every edge appears at most once in $P$, that is, for every $0\le i<j< l(P)$ we have that $e_i\not=e_j$ as elements of $E\subseteq [V]^2\times [\pi]$,

\item \emph{$c$-shiftable} if $l(P)\ge 1$, $P$ is edge injective,  $e_0\in U_c$
and $e_j\in \dom(c)$ for every $1\le j<l(P)$ (that is, if $P$ is non-empty with no edge repeated and $e_0$ is the unique uncoloured edge of $P$);

\item \emph{$c$-proper-shiftable} if $P$ is $c$-shiftable and $c_P;E\to [\Delta+\pi]$ is a proper partial colouring, where $c_P$ is \emph{the shift of $c$ along $P$} (or \emph{$P$-shift of $c$} for short)
which is defined as 
   \begin{itemize}
   	\item $\dom(c_P)=\dom(c)\cup\{e_0\}\setminus \{e_{l(P)-1}\}$ where we put $\{e_{l(P)-1}\}=\emptyset$ if $l(P)=\infty$,
        \item $c_P(e_i)=c(e_{i+1})$ for every $i+1<l(P)$,
        \item $c_P(f)=c(f)$ for every $f\in \dom(c)\setminus P$;
    \end{itemize}
   \item \emph{$c$-augmenting} if $P$ is $c$-proper-shiftable and either $l(P)=\infty$ or $P$ is finite with $m_{c_P}(x)\cap m_{c_P}(y)\not=\emptyset$ where $x\not=y$ are the vertices of the last edge $e_{l(P)-1}$ of~$P$.
\end{enumerate}
\end{definition}

In other words, $P$ is \emph{$c$-proper-shiftable} if $P$ is non-empty, all edges in $P$ are distinct, $e_0$ is the only uncoloured edge in $P$, and if we shift the colouring $c$ down one position along $P$, then the new partial colouring $c_P$ is still proper. 
Moreover, such a chain $P$ is called \emph{$c$-augmenting} if either $P$ is infinite or $P$ is finite and its last edge $e_{l(P)-1}$ misses some colour $\beta$ at both endpoints with respect to the modified colouring $c_P$.
In the former case, the proper colouring $c_P$ colours every edge of $P$ while, in the latter case, we can achieve this
by extending $c_P$ to colour the last edge of $P$ with $\beta$.
Thus a $c$-augmenting chain $P$ gives us a way to extend a proper colouring to include a new edge $e_0$, with all modifications restricted to the edges in~$P$.
Note that the colouring $c_P$, i.e., the $P$-shift of $c$, can be defined for $c$-shiftable chains that are not necessarily $c$-proper-shiftable.
Note that a chain consisting of one uncoloured edge is always $c$-proper-shiftable 
(then $c_P$ is the same as $c$). When the partial colouring $c$ is understood, we may omit it, for example, just saying that $P$ is augmenting.

Let us state some basic properties involving the defined concepts for future reference.

\begin{claim}\label{LastNotcoloured}
Let $P=(e_j)_{j<l(P)}$ be a $c$-shiftable chain and $c_P$ be the $P$-shift of $c$.
Then $c(e_j)\not=c_P(e_j)$ for every $0<j<l(P)-1$.
Moreover, if $l(P)<\infty$ then $P\cap U_{c_P}=\{e_{l(P)-1}\}$ (that is, the last edge of $P$ is the unique edge in $P$ which is not coloured by $c_P$).\qed
\end{claim}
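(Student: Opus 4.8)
\emph{Proof proposal.} The plan is to unwind the definitions of a $c$-shiftable chain and of the $P$-shift $c_P$, using the properness of $c$ exactly once. Throughout we keep in mind that, since $P$ is $c$-shiftable, it is edge injective, $e_0\in U_c$, and $e_j\in\dom(c)$ for all $1\le j<l(P)$, and that by convention $l(P)-1=\infty$ when $l(P)=\infty$, so the first assertion is to be read for all indices $j\ge 1$ in the infinite case.

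For the first assertion, I would fix $j$ with $0<j<l(P)-1$. Then $1\le j$ and $1\le j+1<l(P)$, so both $e_j$ and $e_{j+1}$ lie in $\dom(c)$, and $c_P(e_j)=c(e_{j+1})$ directly from the definition of the $P$-shift. The edges $e_j$ and $e_{j+1}$ are consecutive in the chain $P$, hence $e_j\cap e_{j+1}\ne\emptyset$, and they are distinct because $P$ is edge injective. Since $c$ is a proper partial colouring, two distinct intersecting edges in its domain get distinct colours, so $c(e_j)\ne c(e_{j+1})=c_P(e_j)$, as desired.

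For the ``moreover'' part, assume $l(P)<\infty$ and argue by double inclusion. From the formula $\dom(c_P)=\bigl(\dom(c)\cup\{e_0\}\bigr)\setminus\{e_{l(P)-1}\}$ we read off $e_{l(P)-1}\notin\dom(c_P)$, and since $e_{l(P)-1}$ is an edge of $P$ (here $l(P)\ge 1$ is used), this gives $e_{l(P)-1}\in P\cap U_{c_P}$. Conversely, suppose $e_j\in P$ with $j<l(P)$ and $e_j\in U_{c_P}$; I claim $j=l(P)-1$. If instead $0\le j<l(P)-1$, then $j+1<l(P)$ and the definition of the $P$-shift assigns $c_P(e_j)=c(e_{j+1})$, so $e_j\in\dom(c_P)$ — contradiction. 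Hence $j=l(P)-1$, and because $P$ is edge injective this index corresponds to a single edge, so $P\cap U_{c_P}=\{e_{l(P)-1}\}$. (The boundary case $l(P)=1$, where $e_0=e_{l(P)-1}$ and $c_P$ equals $c$, is covered: then $P\cap U_{c_P}=\{e_0\}$.)

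I do not expect any genuine obstacle here: the statement is pure definition-chasing, the sole non-bookkeeping input being the properness of $c$ invoked in the first assertion. The only points requiring a little care are the boundary indices $j=0$ and $l(P)=1$ (and the infinite case $l(P)=\infty$ for the first assertion), which the argument above handles uniformly once the convention $l(P)-1=\infty$ is taken into account.
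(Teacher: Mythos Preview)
Your proof is correct and is exactly the routine definition-chasing the paper has in mind; the paper itself gives no argument (the claim is marked with a bare \qed), and your use of the standing assumption that $c$ is proper to get $c(e_j)\ne c(e_{j+1})$ for adjacent distinct edges is the only substantive step.
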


\begin{claim}\label{Split}
Let $P=(e_j)_{j<l(P)}$ be a $c$-shiftable chain and $i< l(P)$.
Let $Q:=(e_j)_{i\le j<l(P)}=(e_{i},\dots)$ be obtained from $P$ by removing the first $i$ edges. (Note that $P={P_{i}}^\frown Q$.)
Let $c_{i}$ be $P_{i+1}$-shift of $c$.
Then $Q$ is $c_i$-shiftable and the $P$-shift of $c$ is equal to the $Q$-shift of $c_i$.
\end{claim}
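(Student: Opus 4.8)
The plan is to prove Claim~\ref{Split} by unwinding the definitions of $c$-shiftability and of the $P$-shift, checking that the relevant objects agree on every edge of $E$. Write $n:=l(P)$, and recall that $P = {P_i}^\frown Q$ where $Q = (e_i, e_{i+1}, \dots)$ has length $l(Q) = n - i$, while $c_i$ is the $P_{i+1}$-shift of $c$, i.e.\ the shift of $c$ along the prefix $(e_0,\dots,e_i)$.

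First I would verify that $Q$ is $c_i$-shiftable. Edge-injectivity of $Q$ is immediate since $Q$ is a subsequence of the edge-injective chain $P$. We have $l(Q) = n - i \ge 1$ because $i < n$. Next, $e_i$ must be uncoloured by $c_i$: by definition $\dom(c_i) = \dom(c)\cup\{e_0\}\setminus\{e_i\}$ (here $e_i$ is the last edge of the prefix $P_{i+1}$, and it is genuinely removed since $P_{i+1}$ is finite of length $i+1 \le n$), so $e_i \notin \dom(c_i)$, giving $e_i \in U_{c_i}$. Finally, for $1 \le j < l(Q)$, i.e.\ $i+1 \le i+j < n$, I must check $e_{i+j} \in \dom(c_i)$: indeed $e_{i+j} \in \dom(c)$ since $P$ is $c$-shiftable and $i+j$ lies strictly between $0$ and $n$, and $e_{i+j}$ is distinct from both $e_0$ (as $i+j\ge 1$) and $e_i$ (as $i+j > i$) by edge-injectivity, hence $e_{i+j}\in\dom(c_i)$. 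So $Q$ is $c_i$-shiftable and the $Q$-shift $(c_i)_Q$ is well-defined.

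Next I would show $(c_i)_Q = c_P$ by comparing domains and values. For domains: $\dom(c_P) = \dom(c)\cup\{e_0\}\setminus\{e_{n-1}\}$ (with $\{e_{n-1}\} = \emptyset$ if $n = \infty$), while $\dom((c_i)_Q) = \dom(c_i)\cup\{e_i\}\setminus\{e_{n-1}\} = \bigl(\dom(c)\cup\{e_0\}\setminus\{e_i\}\bigr)\cup\{e_i\}\setminus\{e_{n-1}\}$; since $e_i \notin \dom(c)$ and, by edge-injectivity, $e_i \ne e_0$, re-adding $e_i$ exactly cancels its earlier removal, so the two domains coincide. (The edge case $n=\infty$ is handled uniformly by the convention $\{e_{n-1}\}=\emptyset$.) For values, take any $f$ in the common domain. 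If $f = e_j$ for some $j$ with $j+1 < n$, then $c_P(e_j) = c(e_{j+1})$; on the other hand, if $j < i$ then $e_j$ lies in the prefix $P_{i+1}$ at position $j$ with $j + 1 \le i < n$, so $(c_i)_Q(e_j) = c_i(e_j) = c(e_{j+1})$ (here $(c_i)_Q$ leaves $e_j$ untouched because $e_j \notin Q$ by edge-injectivity, and $c_i$ shifts it); if $j = i$ then $(c_i)_Q(e_i) = c_i(e_{i+1})$, and since $i+1 < n$ means $e_{i+1}\notin\{e_0,e_i\}$ we get $c_i(e_{i+1}) = c(e_{i+1})$, matching; if $j > i$ then $e_j, e_{j+1}$ both lie in $Q$ and $(c_i)_Q(e_j) = c_i(e_{j+1}) = c(e_{j+1})$ by the same index computation. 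Finally if $f \notin P$ then $f\notin Q$ and $f\ne e_i$, so $(c_i)_Q(f) = c_i(f) = c(f) = c_P(f)$. This exhausts all cases, so $(c_i)_Q = c_P$, completing the proof.

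I do not expect a genuine obstacle here — the claim is purely bookkeeping — but the one point requiring care is the interaction between the edge removed by the prefix-shift ($e_i$, removed from $\dom(c_i)$) and the edge re-added by the $Q$-shift ($e_0$ of $Q$, which is $e_i$): one must invoke edge-injectivity of $P$ to know $e_i\ne e_0$ so that the removal/re-addition genuinely cancels rather than interfering with the uncoloured edge $e_0$ of $P$. The other mild subtlety is keeping the index shifts ($j\leftrightarrow j+1$) consistent across the two descriptions of the shift and treating the $l(P)=\infty$ case via the stated convention $l(P)-1=\infty$ and $\{e_{l(P)-1}\}=\emptyset$, which makes all the above case analysis go through verbatim.
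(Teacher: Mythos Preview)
Your approach is the same as the paper's (direct unwinding of the definitions, relying on edge-injectivity), just carried out in greater detail; the paper simply invokes Claim~\ref{LastNotcoloured} to see that $e_i$ is the unique $c_i$-uncoloured edge of $P$ and then declares the rest ``an easy consequence of edge injectivity.''

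There is one slip in your domain computation. You write ``since $e_i \notin \dom(c)$ and, by edge-injectivity, $e_i \ne e_0$, re-adding $e_i$ exactly cancels its earlier removal.'' But for $i\ge 1$ the edge $e_i$ \emph{does} lie in $\dom(c)$ (this is part of $P$ being $c$-shiftable), so your stated justification is backwards. The identity $\bigl((\dom(c)\cup\{e_0\})\setminus\{e_i\}\bigr)\cup\{e_i\}=\dom(c)\cup\{e_0\}$ requires $e_i\in\dom(c)\cup\{e_0\}$, which holds precisely because $e_i\in\dom(c)$ when $i\ge 1$ (and $e_i=e_0$ when $i=0$). Similarly, when you check $c_i(e_{i+1})=c(e_{i+1})$ you should note that $e_{i+1}$ is distinct from \emph{all} of $e_0,\dots,e_i$ (not just $e_0$ and $e_i$), which again is immediate from edge-injectivity. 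With these small corrections the argument goes through.
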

\begin{proof}
It follows from Claim~\ref{LastNotcoloured} applied to the $(i+1)$-st prefix $P_{i+1}$ (and the edge-injectivity of $P$), that $e_{i}$ is the unique edge
of $P$ not coloured by $c_i$.
Since $Q$ starts with this edge and is a subsequence of $P$, it is $c_i$-shiftable.
The claim that $P$-shift of $c$ is equal to the $Q$-shift of $c_i$ is again an easy consequence of the edge injectivity of $P$.
\end{proof}

Using the notation of the Claim~\ref{Split}, we note that the partial colouring $c_i$ need not be proper even if $c$ and the $P$-shift of $c$ are proper.
However in the sequel we use Claim~\ref{Split} only in the situations where 
%we prove inductively that 
$P_i$ is $c$-proper-shiftable for every $i\le l(P)$ and therefore in those cases the $P_i$-shift of $c$ is always proper.

For the purposes of this paper, we will consider only two basic $c$-proper-shiftable chains, namely, what we call a maximal alternating path and a maximal fan.
All other $c$-proper-shiftable chains that we use here will be concatenations of these two building blocks.

\subsection{Alternating paths}\label{AlterPath}

Recall that $c;E\to [\Delta+\pi]$ is a proper partial colouring.
Let $x\in V$, let $\alpha,\beta\in [\Delta+\pi]$ be different colours and suppose that $\beta\in m_c(x)$.
Then there is a unique maximal chain $P=(e_i)_{i<l(P)}$ such that $x\in e_0$ if $l(P)>0$, $x\not\in e_1$ if $l(P)>1$, and $c(e_{i})=\alpha$ (resp.\ $c(e_{i})=\beta$) for every $i<l(P)$ that is even (resp.\ odd). Informally speaking, we start with $x$ and follow the edges coloured $\alpha$ or $\beta$ as long as possible. Since  the partial colouring $c$ is proper and $\beta$ is missing at $x$,  the colours on the chain alternate between $\alpha$ and $\beta$ (starting with $\alpha$) and we never return to a vertex we have previously visited (and thus the edges in $P$ form a path). 
We call this unique maximal chain {\it the (alternating) $\alpha/\beta$-path starting at $x\in V$} and denote it as $P_c(x,\alpha/\beta)$. 
If $P_c(x,\alpha/\beta)$ is finite and non-empty, then we call the unique $y\in V$ such that $|\{f\in P_c(x,\alpha/\beta):y\in f\}|=1$ and $y\not=x$ the {\it last vertex of $P_c(x,\alpha/\beta)$}. If $P_c(x,\alpha/\beta)$ is empty (which happens exactly when $\alpha\in m_c(x)$), then the \emph{last vertex} is~$x$.
Whenever we write $P_c(x,\alpha/\beta)$ we always assume that the condition that $\beta\in m_c(x)$ is satisfied. 

The following claim summarizes some obvious properties of $\alpha/\beta$-paths.

\begin{claim}\label{Basic}
For every $x\in V$ and $\beta\in m_c(x)$, we have:
\begin{enumerate}[(i),nosep]
    \item\label{it:Bi} $P_c(x,\alpha/\beta)$ is edge injective,
    \item\label{it:Bii} $|N(z)\cap P_c(x,\alpha/\beta)|\le 2$ for every $z\in V$ and   $P_c(x,\alpha/\beta)$ is a path,
    \item\label{it:Biii} if $d;E\to [\Delta+\pi]$ is another proper partial colouring such that
    $c\upharpoonright P_c(x,\alpha/\beta)=d\upharpoonright P_c(x,\alpha/\beta)$ and $\beta\in m_d(x)$, then $P_c(x,\alpha/\beta)$ is a prefix of $P_d(x,\alpha/\beta)$.\qed
\end{enumerate}
\end{claim}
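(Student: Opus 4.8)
\textbf{Proof sketch of Claim~\ref{Basic}.} All three parts are proved directly from the maximality and the alternating structure of $P_c(x,\alpha/\beta)$, and two of them are essentially immediate. For~\ref{it:Bi}, note that if an edge $f$ appeared twice in $P=P_c(x,\alpha/\beta)$, then some vertex $z$ of $f$ would be incident to at least three edges of $P$ (the two occurrences of $f$ are reached and left through $z$), which is impossible since at $z$ only one edge of each of the two colours $\alpha,\beta$ is present in $P$ (the colouring $c$ is proper, so $\deg$ in each colour class is at most $1$), forcing $|N(z)\cap P|\le 2$. This last inequality is exactly~\ref{it:Bii}; together with the fact that $x$ belongs to $e_0$ only (established by the construction: $\beta\in m_c(x)$ means no $\beta$-edge at $x$, and we arranged $x\notin e_1$, so $x$ is never revisited because any return would need a second edge of one of the two colours at $x$), this shows $P$ is a path. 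I would phrase~\ref{it:Bi} and~\ref{it:Bii} in one short paragraph, the main point being the local degree bound in each colour class.

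For~\ref{it:Biii}, the plan is an induction on the index showing that the $i$-th prefix $P_i$ of $P_c(x,\alpha/\beta)$ is also a prefix of $P_d(x,\alpha/\beta)$, for all $i\le l(P_c(x,\alpha/\beta))$. The base case $i=0$ is trivial, and $i=1$ uses that $\beta\in m_d(x)$ together with $c\upharpoonright P=d\upharpoonright P$: the first edge of $P_d(x,\alpha/\beta)$ is determined as the (unique, if it exists) $\alpha$-edge at $x$ under $d$, which by the agreement of $c$ and $d$ on $P$ coincides with $e_0$ whenever $P$ is non-empty. For the inductive step, suppose $P_i\sqsubseteq P_d(x,\alpha/\beta)$ with $i<l(P_c(x,\alpha/\beta))$; then $e_{i-1}$ is the last edge so far and its ``new'' endpoint $z$ is reached. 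The next edge of $P_c(x,\alpha/\beta)$ is the unique edge at $z$ coloured by the colour in $\{\alpha,\beta\}$ \emph{not} equal to $c(e_{i-1})$; since $d$ agrees with $c$ on all of $P\supseteq P_{i+1}$, this same edge is the one the $d$-path is forced to take, so $P_{i+1}\sqsubseteq P_d(x,\alpha/\beta)$. The only subtlety is the endpoint where the recursion could stop: if $P_c(x,\alpha/\beta)$ terminates at step $i$ (no further edge of the required colour at $z$ under $c$), we simply stop, having shown $P_c(x,\alpha/\beta)=P_i\sqsubseteq P_d(x,\alpha/\beta)$, which is all that is claimed. Conversely $P_d(x,\alpha/\beta)$ may extend further, which is allowed since we only assert $c\upharpoonright P$-many edges agree, not more.

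I do not expect a genuine obstacle here; the content is bookkeeping around the definition of a maximal alternating path. The one place to be slightly careful is the case distinctions in~\ref{it:Biii} when $P_c(x,\alpha/\beta)$ is empty (then the claim is vacuous, as the empty chain is a prefix of everything) or has length one, and making sure the ``unique edge of a given colour at a vertex'' statements are justified by properness of the relevant colouring each time they are invoked. I would write the whole claim in under half a page. \qed
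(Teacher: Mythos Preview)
Your proposal is correct. The paper does not actually prove this claim: it is stated with a terminal \qed and introduced as ``some obvious properties of $\alpha/\beta$-paths'', so there is no argument to compare against. Your sketch supplies exactly the kind of routine verification the authors intended to omit; the core observation for \ref{it:Bi}--\ref{it:Bii} (that properness of $c$ forces at most one edge of each of the two colours at any vertex, hence $|N(z)\cap P|\le 2$) and the step-by-step induction for \ref{it:Biii} are both sound. One minor sharpening: your derivation of \ref{it:Bi} from \ref{it:Bii} (``some vertex would be incident to at least three edges'') is a bit loose as stated, since a repeated edge does not literally create three \emph{distinct} incident edges; cleaner is to note that repeated edges must have equal colour, hence equal parity of index, and then trace the forced vertex repetition to a contradiction with $\beta\in m_c(x)$ --- but this is cosmetic and does not affect correctness.
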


The following proposition states, in particular, that if  $e=\{x,y\}$ is an uncoloured edge with colours $\alpha\not=\beta$ missing at respectively $y$ and $x$ then we can properly shift the colouring along  the $\alpha/\beta$-path starting at $x$ down to $e$; moreover, if the path does not end in $y$ then this gives an augmenting chain.
Although all claims of the proposition are fairly routine, we include a formal proof for the sake of completeness (and similar applies to a few other results stated later).

\begin{proposition}\label{AlterPathAdmiss}
Let $e=\{x,y\}$ be an uncoloured edge.
Let $\beta\in m_c(x)$ and $\alpha\in m_c(y)$ be distinct.
Let $P:=e^\frown P_c(x,\alpha/\beta)$ be the chain obtained by prepending $e$ to the alternating $\alpha/\beta$-path starting at $x$. Then $P_i$ is $c$-proper-shiftable for every $1\le i\le l(P)$.
Moreover, if $y$ is not the last vertex of $P_c(x,\alpha/\beta)$, then $P$ is $c$-augmenting.
\end{proposition}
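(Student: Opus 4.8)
The plan is to establish the two assertions separately, working by induction on $i$ for the first one and then using it to derive the second. Throughout, write $P=e^\frown P_c(x,\alpha/\beta)$ with $P_c(x,\alpha/\beta)=(f_0,f_1,\dots)$, so $e_0=e$ and $e_{j+1}=f_j$ for $j<l(P_c(x,\alpha/\beta))$.

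\emph{First assertion: each $P_i$ is $c$-proper-shiftable.} The base case $i=1$ is trivial: $P_1=(e)$ with $e\in U_c$, and $c_{P_1}=c$ is proper by hypothesis. For the inductive step, assume $P_i$ is $c$-proper-shiftable and consider $P_{i+1}=P_i{}^\frown(e_i)$. Edge-injectivity of $P_{i+1}$ follows from Claim~\ref{Basic}\ref{it:Bi} together with the fact that $e\notin P_c(x,\alpha/\beta)$ (since $e$ is uncoloured but every edge of the alternating path is coloured). The domain condition for $c$-shiftability is immediate from the definition of $c_P$. The real content is checking that $c_{P_{i+1}}$ is proper. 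Here I would use the description of $c_{P_{i+1}}$: it agrees with $c$ off $P_{i+1}$, and on $P_{i+1}$ it shifts colours down by one, leaving $e_i$ uncoloured. So a conflict in $c_{P_{i+1}}$ could only involve an edge $e_j$ ($j<i$) whose new colour $c(e_{j+1})$ clashes with a neighbouring edge. I would split into cases according to whether $j=0$ (so $e_j=e$, receiving colour $c(e_1)=c(f_0)=\alpha$, which is fine at the $x$-end since $f_0$ was the unique $\alpha$-edge at $x$, and fine at the $y$-end because $\alpha\in m_c(y)$), whether the conflicting neighbour lies on $P$ or off $P$, and the parity of $j$. The alternating structure of $P_c(x,\alpha/\beta)$ — consecutive edges share exactly one vertex, colours alternate $\alpha,\beta,\alpha,\dots$, and each internal vertex sees exactly one $\alpha$-edge and one $\beta$-edge of the path — is exactly what forces the shifted colours to remain consistent with neighbours off the path. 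This is the bookkeeping-heavy part, but it is routine given properness of $c$ and $\beta\in m_c(x)$.

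\emph{Second assertion: if $y$ is not the last vertex, then $P$ is $c$-augmenting.} If $l(P)=\infty$ there is nothing to prove beyond the first assertion, so assume $P$ is finite; then $P_c(x,\alpha/\beta)$ is a finite (possibly empty) path with last vertex $z\neq y$. We already know $P$ is $c$-proper-shiftable, so it remains to find a colour in $m_{c_P}(u)\cap m_{c_P}(v)$ where $\{u,v\}=e_{l(P)-1}$ is the last edge. If $P_c(x,\alpha/\beta)$ is empty, then $\alpha\in m_c(x)$, the last edge is $e$ itself with endpoints $x,y$, and $c_P=c$; then $\alpha\in m_c(y)$ by hypothesis and $\alpha\in m_c(x)$, so $\alpha$ works. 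If $P_c(x,\alpha/\beta)$ is nonempty, the last edge $e_{l(P)-1}=f_{m}$ is coloured $\alpha$ or $\beta$ under $c$ and becomes uncoloured under $c_P$; its endpoints are $z$ and one earlier vertex $w$. The path stopped at $z$ because the colour it would next follow ($\beta$ if $f_m$ is $\alpha$-coloured, or $\alpha$ if $\beta$-coloured) is missing at $z$, so that colour is in $m_c(z)$; after the shift it is still in $m_{c_P}(z)$ since the only edges changed at $z$ are $f_m$ (now uncoloured) and possibly its predecessor, whose colour shift I must check does not introduce that colour at $z$ — the alternating pattern guarantees it does not. At the other endpoint $w$, $f_m$ becoming uncoloured frees up its old colour, and one checks via the shift description that this colour is missing at $w$ under $c_P$; combined with what is missing at $z$, and using Claim~\ref{BiggerThanMultiplicity} if a direct match fails, one produces a common missing colour. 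The condition $z\neq y$ is used precisely so that the last edge of $P$ is genuinely an edge of the alternating path (not $e$ doubling back through $y$) and so that no earlier edge of $P$ has had its colour changed in a way that interferes.

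\emph{Main obstacle.} The genuinely delicate point is the properness check in the inductive step of the first assertion: one must rule out every possible colour clash created by the downward shift, and the argument naturally breaks into several cases (endpoint $x$ versus internal vertices, neighbours on $P$ versus off $P$, the two parities). Keeping this case analysis organized — and correctly invoking that internal vertices of an alternating path see exactly one edge of each of the two colours — is where the proof requires care; everything else is routine once properness is in hand. I would present the case analysis as a short enumerated list inside the proof to keep it readable.
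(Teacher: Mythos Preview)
Your outline is correct but more laborious than the paper's. You induct on $i$ and run a case analysis at each step; the paper instead gives a one-shot global description of the shifted colouring: for finite $i\ge 2$, the shift $c_{P_i}$ is obtained from $c$ by assigning colour $\alpha$ to $e$, uncolouring $e_{i-1}$, and \emph{swapping} $\alpha$ and $\beta$ on the intermediate edges $e_1,\dots,e_{i-2}$. With this description properness is immediate, since the set of colours present at any vertex is unchanged except at $x$ (where $\beta$ newly appears --- harmless as $\beta\in m_c(x)$), at $y$ (where $\alpha$ newly appears --- harmless as $\alpha\in m_c(y)$), and at the endpoints of the uncoloured last edge (where a colour disappears). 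No induction or case split is needed, and the infinite case follows by a limit argument. The ``swap'' viewpoint is exactly what your remark about internal vertices seeing one edge of each colour is driving at; formulating it globally is what buys the simplification.

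One slip in your second assertion: at the penultimate vertex $w$, uncolouring $f_m$ does \emph{not} free its old colour $c(f_m)$ at $w$, because the preceding edge of $P$ simultaneously acquires exactly that colour under the shift. What becomes missing at $w$ is the \emph{other} colour $\delta\in\{\alpha,\beta\}\setminus\{c(f_m)\}$, and this same $\delta$ was already missing at the terminal vertex $z$ (that is precisely why the alternating path stopped there). So the common missing colour is always $\delta$ directly, and the appeal to Claim~\ref{BiggerThanMultiplicity} is unnecessary.
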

\begin{proof}
	By the second part of Claim~\ref{Basic}, $P=(e_i)_{i<l(P)}$ is a path or a cycle (with the latter alternative taking place if and only if $y$ is the last vertex of $P_c(x,\alpha/\beta)$).
	
	%Let us show that $P=(e_0,e_1,\dots)$ is $c$-proper-shiftable.
	We have $e_0=e$ and therefore $e_0\in U_c$.
	The edge-injectivity of $P$ follows from the fact that $P$ is a path or a cycle.
	By definition, we have  for every $1\le i<l(P)$ that $e_i\in P_c(x,\alpha/\beta)\subseteq \dom(c)$. Thus $e_0$ is the only edge in $P$ which is not in $\dom(c)$, that is, $P$ is shiftable. Let $c_1=c$ and, for $2\le i\le l(P)$,  let $c_i$ be the shift of $c$ along $P_i$.

	%It remains to show that the partial colouring $c'$, the shift of $c$ along $P$, is proper.  
	
Observe that if $2\le i<\infty$, then each $c_i$ is obtained from $c$ by colouring $e_0=e$ with $\alpha$, uncolouring $e_{i-1}$, and swapping the colours $\alpha$ and $\beta$ on the intermediate edges $e_1,\dots,e_{i-2}$. Thus, when we pass from $c$ to $c_i$, the sets of used/missing colours at any vertex $v\in V$ are the same except when $v=x$ (when a new colour $\beta$ appears at $x$), $v=y$ (when the colour $\alpha$ appears at $y$) and when $v$ is in the last edge of $P_i$ (when one of the colours $\alpha$ or $\beta$ becomes missing if $v\not\in \{x,y\}$). Since $\beta\in m_c(x)$ and $\alpha\in m_c(y)$, we see that $c_i$ is a proper colouring, that is, $P_i$ is proper-shiftable whenever $i<\infty$.
This also implies that if $l(P)=\infty$, then $P_{l(P)}$ is proper-shiftable.
This is because $c_P$ can be thought of as a limit of $c_{P_i}$ for $i<l(P)=\infty$ (given any finite set of edges there is $i_0<l(P)=\infty$ such that $c_i$ agrees with $c_j$ on this set for every $i_0<i\le j\le \infty$). Since being a proper partial colouring is a local condition, the claim follows.

Finally, if $y$ is not the last vertex of $P_c(x,\alpha/\beta)$, then $P$ is a path and either $P$ is infinite or the vertices of the last edge $e$ of $P$ both miss, in the shift $c_P$, the colour in $\{\alpha,\beta\}\setminus \{c(e)\}$. In either case, the chain $P$ is augmenting.
\end{proof}

\subsection{Fan}\label{MultiFan}

As before, let $c;E\to [\Delta+\pi]$ be a proper partial colouring.
Let $e\in U_c$ and $x\in e$.
Recall that there is some fixed ordering on the set of colours $[\Delta+\pi]$.
We define the {\it maximal fan around $x$ starting at $e$}, in symbols $F_c(x,e)$, as a (finite) chain $P=(e_0,e_1,\dots,e_k)$ such that $x\in e_i$ for every $i\le k$ and if we denote the other vertex in $e_i$ by $v_i$ then the following statements are satisfied
\begin{enumerate}[(a),nosep]
    \item $e_0=e$,
    \item $P$ is edge injective,
    \item $c(e_{i+1})\in m_c(v_i)$ for every $i<k$ and $c(e_{i+1})$ is the minimal colour available in the $i$-th step,
    where we say that a colour $\alpha$ is \emph{available in the $i$-th step} if $\alpha\in m_c(v_i)$ and $\alpha\not=c(e_{j+1})$ for every $j<i$ such that $v_j=v_i$,
    \item $(e_0,\dots,e_k)$ is maximal with these properties.
\end{enumerate}
We denote the minimal colour available in the $i$-th step as $\alpha_i(c,x,e)$ or $\alpha_i$ for short if the context is understood.
This gives rise to an accompanying injective sequence of colours $(\alpha_0,\dots,\alpha_{k-1})=(c(e_1),\dots, c(e_k))$.
Note that, when $\pi\ge 2$, it is possible that $v_i=v_j$ for different indices $i,j\le k$, however it is not possible that $v_i=v_{i+1}$ for any $i+1<l(P)$.
 
In other words, we construct the fan $F_c(x,e)$ as follows. Start with $e$, denoting it as $e_0:=\{x,v_0\}$, and define $A_0:=m_c(v_0)$. (Each set $A_i$ will be exactly the set of available colours at the $i$-th step.) Suppose that we have defined $(e_0,\dots,e_{i-1})$ and non-empty sets $A_0,\dots,A_{i-1}$ for some $i\ge 1$. For $0\le j\le i-1$, let $\alpha_{j}$ be the smallest element of $A_{j}$. It will always be the case that $c(e_{j+1})=\alpha_j$ for each $0\le j\le i-2$ and we try to define $e_i$ to satisfy this condition for $j=i-1$.
If no edge at $x$ is coloured $\alpha_{i-1}$ then the current fan is maximal, we let $F_c(x,e):=(e_0,\dots,e_{i-1})$ and stop. Otherwise let $e_{i}:=\{x,v_{i}\}$ be the unique edge at $x$ coloured $\alpha_{i-1}$. If $e_{i}$ is equal to some $e_{j}$ with $j<i$, then we let $F_c(x,e)=(e_0,\dots,e_{i-1})$ and stop (without including $e_i$ into the fan). Otherwise, let $A_{i}$
be obtained from $m_c(v_{i})$  by removing $\{\alpha_j: v_j=v_{i},\ j< i\}$, that is, removing those colours that have been previously ``used'' at the current vertex~$v_{i}$.
Note that $|A_i|\ge |m_c(v_i)|-(\pi-1)>0$ so we can proceed with the next iteration step.
Since edges do not repeat, we have to stop at some point, obtaining the maximal fan $F_c(x,e)$. 

The purpose of this construction, like that in Proposition~\ref{AlterPathAdmiss}, is to generate a sequence of edges starting with a given uncoloured edge $e$ so that the shift of the current colouring $c$ along any prefix is still proper. The following claim formalizes this statement.

\begin{claim}\label{AdmissMultiFan}
Let $e\in U_c$ and $x\in e$.
Then $F_c(x,e)_i$ is $c$-proper-shiftable for every $i\le l(F_c(x,e))$.
\end{claim}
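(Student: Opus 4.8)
The plan is to verify the claim by induction on $i$, tracking carefully what happens to the colouring at each vertex as we shift along longer and longer prefixes of the fan.

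First I would set $P := F_c(x,e) = (e_0,\dots,e_k)$ and, for each $i \le l(P)$, let $c_i$ denote the $P_i$-shift of $c$ (with $c_0$ undefined or taken to be $c$ with $e_0$ still uncoloured; more precisely $P_1 = (e_0)$ and $c_1 = c$ since the shift of a one-edge chain leaves $c$ unchanged). The base case $i = 1$ is immediate: $P_1 = (e_0)$ is $c$-shiftable since $e_0 \in U_c$, and $c_{P_1} = c$ is proper by hypothesis. For the inductive step, assume $P_i$ is $c$-proper-shiftable; I want to show $P_{i+1}$ is too. Edge-injectivity of $P_{i+1}$ is built into the construction of the fan (condition (b) in the definition). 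That $e_0$ is the unique uncoloured edge of $P_{i+1}$ follows because $e_1,\dots,e_i \in \dom(c)$ by construction (each $e_{j+1}$ is the edge at $x$ coloured $\alpha_j$). So $P_{i+1}$ is $c$-shiftable, and it remains to check that $c_{i+1}$, the $P_{i+1}$-shift of $c$, is a proper partial colouring.

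The key observation — analogous to the one in the proof of Proposition~\ref{AlterPathAdmiss} — is to describe $c_{i+1}$ concretely in terms of $c$. Shifting along $P_{i+1} = (e_0, e_1, \dots, e_i)$ amounts to: colouring $e_0 = \{x,v_0\}$ with $c(e_1) = \alpha_0$, recolouring $e_j = \{x, v_j\}$ with $c(e_{j+1}) = \alpha_j$ for $1 \le j \le i-1$, and uncolouring $e_i = \{x, v_{i-1}... \}$ — here one must be slightly careful with indexing: the last edge $e_i$ of $P_{i+1}$ becomes uncoloured, and each earlier edge $e_j$ ($j < i$) gets $c(e_{j+1})$. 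All of these edges are incident to $x$, so the only vertices where the set of used colours changes are $x$ itself and the vertices $v_0, \dots, v_i$. At $x$: the multiset of colours used on edges at $x$ under $c_{i+1}$ is the same as under $c$ except that the colour $c(e_i) = \alpha_{i-1}$ has been removed (since $e_i$ is now uncoloured) — so no conflict is created at $x$. At each $v_j$ with $j \le i-1$: the colour on $e_j$ changed from (whatever it was) to $\alpha_j$; by the definition of "available in the $i$-th step", $\alpha_j \in m_c(v_j)$ and $\alpha_j$ differs from $c(e_{j'+1}) = \alpha_{j'}$ for every $j' < j$ with $v_{j'} = v_j$ — which is exactly the condition needed to ensure that the new colours placed on the various edges $e_{j'}$ sharing the endpoint $v_j$ are pairwise distinct and missing at $v_j$. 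At $v_i$: nothing changes (the edge $e_{i+1}$, if it existed, is not in $P_{i+1}$). Collecting these, $c_{i+1}$ is proper. I would also note, as in Proposition~\ref{AlterPathAdmiss}, that this handles $i = l(P)$ as well; since $P$ is finite here there is no limiting argument needed.

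The main obstacle — really the only subtle point — is bookkeeping the colours at a vertex $v_j$ that is visited \emph{more than once} by the fan (possible when $\pi \ge 2$, since then $v_{j'} = v_j$ can happen for $j' \ne j$). One must confirm that the colours $\alpha_{j'}$ assigned to the distinct parallel-ish edges $e_{j'}$ all landing at $v_j$ are distinct and all lie in $m_c(v_j)$; this is precisely what the "$\alpha \ne c(e_{j'+1})$ for every $j' < i$ with $v_{j'} = v_i$" clause in the definition of availability guarantees, together with the fact that $m_c(v_j)$ is the fixed set of colours missing at $v_j$ under $c$ and shifting only touches edges at $x$ (so $m_c(v_j)$ doesn't shrink under $c_{i+1}$ except possibly by the colour we are about to place). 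Once this is spelled out, properness of $c_{i+1}$ at $v_j$ follows, and the induction closes.
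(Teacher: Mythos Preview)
Your argument is correct and follows essentially the same approach as the paper's proof: check that no conflict arises at $x$ (since all fan edges share $x$ and the shift merely permutes the colours there), and check that at each other endpoint $v_j$ the new colours $\alpha_{j'}$ are pairwise distinct and lie in $m_c(v_j)$, which is exactly what the availability condition guarantees. One minor slip: at $x$ the set of colours is in fact \emph{unchanged} under the shift (the colour $\alpha_{i-1}$ moves from $e_i$ to $e_{i-1}$, it is not removed), but this only strengthens your conclusion; also, the induction on $i$ is harmless but unnecessary, since the same direct check works for every prefix at once, as the paper does.
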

\begin{proof} Clearly, no conflict can arise at the vertex $x$ because it belongs to every edge of the fan. So we need to consider only the other endpoints $v_i$.
If some vertex $v$ appears as $v_{i_1},\dots,v_{i_k}$ in the fan, then new colours that may be introduced at $v$ during a shift are limited to $\alpha_{i_1},\dots,\alpha_{i_k}$. Since no colour is repeated in this sequence and all of them are in $m_c(v)$, no conflict can arise at $v$ either. 
\end{proof}

Like in Proposition~\ref{AlterPathAdmiss}, if a maximal fan starting with an uncoloured edge $e$ does not allow by itself to extend the domain of the current colouring to $e$, then there is some concrete obstacle for this. Here, it is the coincidence of the minimal available colours at two distinct steps, as is shown by the following proposition.

\begin{proposition}\label{TwoIndices}
Let $e\in U_c$ and $x\in e$. Let $k=l(F_c(x,e))-1$. If $F_c(x,e)$ is not $c$-augmenting, then there is $j<k$ such $\alpha_j(c,x,e)=\alpha_k(c,x,e)$.
Moreover in such a situation we must have $v_j\not=v_{k}$.
\end{proposition}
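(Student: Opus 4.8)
The plan is to analyze what it means for $F_c(x,e)$ to fail to be $c$-augmenting and to extract the claimed coincidence of available colours by a counting/pigeonhole argument on the last vertex. Write $P = F_c(x,e) = (e_0,\dots,e_k)$ with $e_i = \{x,v_i\}$, and let $\alpha_i = \alpha_i(c,x,e)$ for $i < k$; recall $c(e_{i+1}) = \alpha_i$. By Claim~\ref{AdmissMultiFan}, $P$ is $c$-proper-shiftable, so by Definition~\ref{cAdmiss}(d), $P$ being not $c$-augmenting (it is automatically finite) means that $m_{c_P}(x) \cap m_{c_P}(v_k) = \emptyset$, where $c_P$ is the $P$-shift of $c$ and $e_k = \{x, v_k\}$ is the last edge.

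First I would pin down $m_{c_P}(x)$ and $m_{c_P}(v_k)$. When we shift along $P$, the edge $e_0$ becomes coloured (with $\alpha_0$, in fact, but what matters is that $x$ is now incident to the same \emph{set} of colours as before plus possibly none new — actually shifting just cyclically relabels colours among $e_0,\dots,e_{k-1}$ at $x$ and uncolours $e_k$), so $m_{c_P}(x) = m_c(x) \cup \{c(e_k)\} = m_c(x) \cup \{\alpha_{k-1}\}$; in particular every colour in $m_{c_P}(x)$ is either in $m_c(x)$ or equals $\alpha_{k-1}$. For $v_k$: the shift does not touch any edge at $v_k$ (the fan edges meeting $v_k$ are exactly those $e_i$ with $v_i = v_k$, and $e_k$ gets uncoloured while the others keep their colours under the shift since they are not the last edge — here I should use Claim~\ref{LastNotcoloured} and the fact that only $e_k$ among fan edges at $v_k$ is affected, being the last edge). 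Hence $m_{c_P}(v_k) \supseteq m_c(v_k)$, and more precisely $m_{c_P}(v_k) = m_c(v_k) \cup \{c(e_k)\} = m_c(v_k) \cup \{\alpha_{k-1}\}$ as well.

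Now consider $\alpha_k$, the minimal colour available at step $k$: by construction $\alpha_k \in m_c(v_k)$ and $\alpha_k \notin \{\alpha_j : j < k,\ v_j = v_k\}$. I claim $\alpha_k \in m_{c_P}(v_k)$ — this is immediate since $\alpha_k \in m_c(v_k) \subseteq m_{c_P}(v_k)$. Since $m_{c_P}(x) \cap m_{c_P}(v_k) = \emptyset$, we get $\alpha_k \notin m_{c_P}(x) \supseteq m_c(x)$, so $\alpha_k \notin m_c(x)$; thus $\alpha_k$ is a colour actually used at $x$, i.e.\ there is a (unique) edge at $x$ coloured $\alpha_k$. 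The fan stopped at step $k$ for one of two reasons: either no edge at $x$ is coloured $\alpha_k$, or that edge coincides with some earlier $e_j$. The first is now excluded, so the edge at $x$ coloured $\alpha_k$ is $e_j = \{x,v_j\}$ for some $j \le k$ with $c(e_j) = \alpha_k$. Since $c(e_{j}) = \alpha_{j-1}$ for $1\le j\le k$ and the sequence $(\alpha_0,\dots,\alpha_{k-1})$ is injective with $\alpha_k$ not among $\alpha_0,\dots,\alpha_{k-1}$... here I need to be slightly careful: the relation "$\alpha_k = c(e_j)$" with $j\ge 1$ would give $\alpha_k = \alpha_{j-1}$, contradicting injectivity and $\alpha_k \ne \alpha_i$ for $i<k$ — unless $j = 0$, where $e_0$ is uncoloured. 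The resolution is that the index $j$ in the stopping rule refers to which $e_i$ the newly-formed edge equals; so the correct statement is that $e_{k+1}$ (the edge at $x$ coloured $\alpha_k$) equals some $e_j$ with $j \le k$, and I should track the colour bookkeeping to land on $\alpha_j = \alpha_k$ for some $j < k$. The main obstacle is getting this indexing exactly right: reconciling the "stop because the new edge repeats" clause with the colour sequence so that the repeated \emph{edge} forces a repeated \emph{available colour} $\alpha_j = \alpha_k$ with $j<k$, rather than a repeated edge-colour. Once that is done, the "moreover" is easy: if $v_j = v_k$ then by definition of availability at step $k$ we excluded $\alpha_j$ (since $v_j = v_k$, $j < k$), so $\alpha_k \ne \alpha_j$, a contradiction; hence $v_j \ne v_k$.

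I would close by writing the argument cleanly in the direction: assume not $c$-augmenting $\Rightarrow$ $m_{c_P}(x)\cap m_{c_P}(v_k)=\emptyset$ $\Rightarrow$ $\alpha_k$ is used at $x$ $\Rightarrow$ the fan-stopping reason is edge-repetition, i.e.\ the edge coloured $\alpha_k$ at $x$ is some $e_{j}$ already in the fan $\Rightarrow$ (bookkeeping) $\alpha_j = \alpha_k$ with $j < k$ $\Rightarrow$ ($v_j = v_k$ impossible by the availability condition) $v_j \ne v_k$. The only genuinely delicate point, as noted, is the index/colour bookkeeping in the middle implication; everything else is a direct unwinding of the definitions of the shift $c_P$, of $m_c(\cdot)$, and of the fan construction, together with Claims~\ref{LastNotcoloured} and~\ref{AdmissMultiFan}.
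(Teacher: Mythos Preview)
Your approach is the paper's: from the failure of augmenting deduce $\alpha_k \notin m_c(x)$, so the edge at $x$ coloured $\alpha_k$ exists and, by maximality of the fan, equals some earlier $e_{j+1}$, giving $\alpha_j = c(e_{j+1}) = \alpha_k$; then the availability rule at step $k$ forbids $v_j = v_k$. Two details need fixing, though neither breaks the argument.

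First, $m_{c_P}(x) = m_c(x)$ exactly, not $m_c(x)\cup\{\alpha_{k-1}\}$: the shift merely permutes the colours $\alpha_0,\dots,\alpha_{k-1}$ among the fan edges at $x$ (edge $e_{k-1}$ receives colour $\alpha_{k-1}$ under $c_P$), so the set of colours present at $x$ is unchanged. Second, your justification that ``the shift does not touch any edge at $v_k$'' is wrong---Claim~\ref{LastNotcoloured} says precisely that every intermediate edge \emph{does} change colour---so if some $e_i$ with $i<k$ has $v_i = v_k$, its colour is altered. The correct argument for $\alpha_k \in m_{c_P}(v_k)$ is that the new colour on such an $e_i$ is $c_P(e_i)=\alpha_i$, and the availability condition defining $\alpha_k$ guarantees $\alpha_k \ne \alpha_i$ for every $i<k$ with $v_i=v_k$.

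Finally, the index bookkeeping you flag as the main obstacle dissolves once you write the repeated edge as $e_{j+1}$ for some $0\le j\le k-1$ (it cannot be $e_0$, which is uncoloured): then $c(e_{j+1})=\alpha_k$ reads directly as $\alpha_j=\alpha_k$ with $j<k$, and there is nothing more to it.
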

\begin{proof}
	Recall that the set of the colours available in the $k$-th step is non-empty. (Indeed, note that $|m_c(v_k)|\ge \pi$ by Claim~\ref{BiggerThanMultiplicity} while at most  $|\{i<k:v_i=v_{k}\}|<\pi$ further colours can be unavailable.)

Write $d$ for the $F_c(x,e)$-shift of $c$.
By the Claim~\ref{AdmissMultiFan} we have that $d$ is a proper partial colouring and by Claim~\ref{LastNotcoloured} we have $e_k\not\in \dom(d)$.
It is easy to see from the definition of $d$ and $\alpha_k$ that $\alpha_k\in m_d(e_k)$.
Since $F_c(x,e)$ is not $c$-augmenting, we have that $\alpha_k$ does not belong to $m_d(x)= m_c(x)$, that is, some edge at $x$ has $c$-colour $\alpha_k$. The only reason why this edge is not added as $e_{k+1}$ to the maximal fan $F_c(x,e)$ is that it already appears in the fan, that is, there is some $j< k$ with $\alpha_k=c(e_{j+1})=\alpha_j$, proving the first conclusion of the proposition.

Also, the case $v_j=v_k$ is impossible because the colour $\alpha_j=\alpha_k$ that was `used' at the $j$-th step is unavailable at every later moment when we visit the same vertex $v_j$ again. 

Thus the obtained index $j$ has all the required properties.
\end{proof}

\subsection{The Vizing Chain}\label{VizingChain}

As usual, let $c;E\to [\Delta+\pi]$ be a proper partial colouring. Here, for every $e\in U_c$ and $x\in e$, we define a chain $V_c(x,e)$, which we call the {\it Vizing chain}, that starts at $e$ and is always $c$-augmenting. (In particular, as we discuss at the end of this section, this suffices for establishing Vizing's theorem for finite multi-graphs.)

If the fan $F_c(x,e)$ is $c$-augmenting, then we define 
 \begin{equation}\label{eq:Wcxe0}
 V_c(x,e):=F_c(x,e).
 \end{equation}
 
Now assume that  the Vizing fan $F_c(x,e)$ is not $c$-augmenting (and let this assumption apply until the end of Section~\ref{SuperbEdges}).
Informally speaking, we consider the two special indices $j\not=k$ with the same available colour $\beta$ (whose existence is guaranteed by Proposition~\ref{TwoIndices}), fix $\alpha\in m_c(x)$, and consider two alternating $\alpha/\beta$-paths starting with $v_j$ and $v_k$. For at least one choice of $i\in\{j,k\}$, the $\alpha/\beta$-path starting at $v_i$ does not end in~$x$. Thus, if we shift colours in the fan $F_c(x,e)$ until $e_i$ is uncoloured and then shift colours down to $e_i$ along the whole alternating $\alpha/\beta$-path at $v_i$, then we extend the domain of the colouring to $e$, apart at most one edge. This exceptional edge (if exists) is the last edge of the path and can be properly coloured with one of $\alpha$ or $\beta$. 
The following proposition establishes the above claims and formally defines Vizing's chain $V_c(x,e)$ in this case.  
%We denote this chain by $V_c(x,e)$ and call it the {\it Vizing's chain}.

\begin{proposition}\label{VizingPath}
Let $e\in U_c$ and $x\in e$ be such that $F_c(x,e)$ is not $c$-augmenting. 
Let $F_c(x,e)=(e_0,\dots,e_k)$ and let $\alpha\in m_c(x)$ be the minimal colour in $m_c(x)$.
Then there is $i\in \{j,k\}$ (where $j$ is the index from Proposition~\ref{TwoIndices})
% and $k=l(F_c(x,e))-1$) 
such that if we write $\beta:=\alpha_j(c,x,e)=\alpha_k(c,x,e)$, then the chain
    \begin{equation}\label{eq:Wcxe}
    V_c(x,e):={F_c(x,e)_{i+1}}^\frown P_c(v_i,\alpha/\beta)
    \end{equation}
     is $c$-augmenting and, moreover, the path $P_c(v_i,\alpha/\beta)$ does not use the vertex~$x$.
\end{proposition}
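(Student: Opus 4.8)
The plan is to analyze the two alternating $\alpha/\beta$-paths $P_c(v_j,\alpha/\beta)$ and $P_c(v_k,\alpha/\beta)$ starting at the two ``twin'' vertices $v_j,v_k$ with the common missing colour $\beta$, and argue that at least one of them avoids $x$ (and in particular does not end at $x$); then combining it with the appropriate fan prefix yields a $c$-augmenting chain. Note that $\alpha\neq\beta$: indeed $\alpha\in m_c(x)=m_{d}(x)$ where $d$ is the $F_c(x,e)$-shift, while we showed in the proof of Proposition~\ref{TwoIndices} that some edge at $x$ has $c$-colour $\alpha_k=\beta$, so $\beta\notin m_c(x)$. Hence the notation $P_c(v_i,\alpha/\beta)$ makes sense for $i\in\{j,k\}$ provided $\beta\in m_c(v_i)$, which holds since $\beta=\alpha_j(c,x,e)=\alpha_k(c,x,e)$ is available, hence missing, at both $v_j$ and $v_k$.

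First I would establish the structural fact that $P_c(v_j,\alpha/\beta)$ and $P_c(v_k,\alpha/\beta)$ cannot both pass through $x$. Suppose for contradiction both do. Since $\alpha\in m_c(x)$, no edge at $x$ is coloured $\alpha$, so the only edge at $x$ on either path is a $\beta$-coloured edge, and there is exactly one such edge $f^\ast$ at $x$ (the edge identified in the proof of Proposition~\ref{TwoIndices}). An alternating $\alpha/\beta$-path is determined by its endpoint together with the set of $\alpha/\beta$-coloured edges; the two paths $P_c(v_j,\alpha/\beta)$ and $P_c(v_k,\alpha/\beta)$ are maximal sub-paths of the $2$-regular-away-from-endpoints ``$\alpha/\beta$-graph''. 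If both contain $x$, then since $x$ has only the single incident $\alpha/\beta$-edge $f^\ast$, the component of $x$ in the $\alpha/\beta$-graph is a path having $x$ as an endpoint, and its unique maximal alternating traversal from $v_j$ (respectively $v_k$) would terminate at $x$; but then $v_j$ and $v_k$ would lie in the same component and that component is a single path, forcing $\{v_j,v_k\}$ to be its two endpoints with $f^\ast$ adjacent to one of them — a contradiction, since $f^\ast$ is incident to $x$ which is an interior-or-endpoint vertex distinct from $v_j$ by $v_j\ne v_k$ and by the fan construction ($v_j\ne v_k$ was proved in Proposition~\ref{TwoIndices}). I would make this counting precise: the $\alpha/\beta$-component of $x$ is a path $\Pi$ with $x$ as one endpoint (as $\deg=1$ there); $\Pi$ has one other endpoint, call it $w$; a vertex of degree $2$ in $\Pi$ cannot be the start of a maximal $\alpha/\beta$-path in the sense of $P_c(\cdot,\alpha/\beta)$ unless it misses $\alpha$, but $v_j,v_k$ both miss $\beta$, not (necessarily) $\alpha$, so the starting vertices $v_j,v_k$ of these maximal paths must be endpoints of their components; hence if both $P_c(v_j,\alpha/\beta)$ and $P_c(v_k,\alpha/\beta)$ reach $x$ they both equal (as edge sets) $\Pi$, forcing $v_j=w=v_k$, contradicting $v_j\ne v_k$. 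Therefore at least one $i\in\{j,k\}$ has $x\notin\bigcup P_c(v_i,\alpha/\beta)$; fix such an $i$. This also gives the ``moreover'' clause immediately.

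Next I would verify that $V_c(x,e):={F_c(x,e)_{i+1}}^\frown P_c(v_i,\alpha/\beta)$ is $c$-augmenting. By Claim~\ref{AdmissMultiFan}, the prefix $Q:=F_c(x,e)_{i+1}$ is $c$-proper-shiftable, and by Claim~\ref{LastNotcoloured} the $Q$-shift $d:=c_Q$ leaves exactly $e_i$ uncoloured among edges of $Q$; by the fan construction $d$ differs from $c$ only on edges incident to $x$, and in particular $d$ agrees with $c$ off $N(x)$, $\alpha\in m_d(x)$ still (since $\alpha\in m_c(x)$ and no $\alpha$-edge at $x$ was created — a shift along a fan only moves colours $\alpha_0,\dots,\alpha_{i-1}$, none equal to $\alpha$ as $\alpha\in m_c(x)$ means $\alpha\notin\{c(e_1),\dots\}$... here one must be slightly careful but it is routine), and $\beta\in m_d(v_i)$ because $\beta$ was available at the $i$-th step and the shift does not change the colours missing at $v_i$ in a way that removes $\beta$ — indeed $\beta=\alpha_i(c,x,e)$ is exactly the colour we would have assigned, and $d(e_i)$ is undefined. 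Now apply Claim~\ref{Split} to peel off $Q$: $P_c(v_i,\alpha/\beta)$ starts at $e_i$, so we may regard $V_c(x,e)$ as $Q^\frown R$ with $R=P_c(v_i,\alpha/\beta)$, and the $V_c(x,e)$-shift of $c$ equals the $R$-shift of $d$. Since $x\notin\bigcup R$ by the previous paragraph, $v_i$ is not the last vertex of $P_d(v_i,\alpha/\beta)$ unless that path is empty or ends elsewhere — more precisely, because $x\notin R$, the last vertex of $R$ is some $z\ne x$ (or $R$ is infinite), so by Proposition~\ref{AlterPathAdmiss} applied with the uncoloured edge $e_i$, colours $\beta\in m_d(v_i)$ and $\alpha\in m_d(z')$ at the far endpoint (here one takes $e_i=\{v_i,v_i'\}$; one must check $\alpha\in m_d(v_i')$ — but $v_i'=x$!). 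This is the subtle point: the edge $e_i$ has endpoints $v_i$ and $x$, and $P_c(v_i,\alpha/\beta)$ is a path at $v_i$ missing $\alpha$ at its far end, so $Q^\frown R$ from $e_i$ onward is exactly the chain $e_i{}^\frown P_d(v_i,\alpha/\beta)$ to which Proposition~\ref{AlterPathAdmiss} applies with $x$ playing the role of ``$x$'' there and $v_i$ the role of ``$y$'': we need $\alpha\in m_d(v_i)$? No — Proposition~\ref{AlterPathAdmiss} needs $\beta\in m_c(x)$ and $\alpha\in m_c(y)$ where $e=\{x,y\}$ and the path starts at $x$; translating, the path starts at $v_i$, so $v_i$ plays ``$x$'' and needs $\beta\in m_d(v_i)$ (true), and the other endpoint $x$ of $e_i$ plays ``$y$'' and needs $\alpha\in m_d(x)$ (true, shown above). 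Good. Then Proposition~\ref{AlterPathAdmiss} tells us every prefix of $e_i{}^\frown P_d(v_i,\alpha/\beta)$ is $d$-proper-shiftable, and since ``$v_i$ is not the last vertex of $P_d(v_i,\alpha/\beta)$'' — equivalently $x\notin R$, which is what we proved, giving $P_d(v_i,\alpha/\beta)=R$ and $x$ not its last vertex — the chain $e_i{}^\frown P_d(v_i,\alpha/\beta)$ is $d$-augmenting. Pulling back through Claim~\ref{Split} (and using that every prefix $Q_m$ for $m\le l(Q)$ is $c$-proper-shiftable, so the hypothesis at the end of Subsection~\ref{BasicChain} on using Claim~\ref{Split} is met), the full chain $V_c(x,e)=Q^\frown R$ is $c$-proper-shiftable with $c$-shift equal to $d$-shift of $e_i{}^\frown R$, hence $c$-augmenting.

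The main obstacle I anticipate is the bookkeeping in the combinatorial lemma of the second paragraph: proving cleanly that two maximal alternating $\alpha/\beta$-paths started at the distinct vertices $v_j\ne v_k$ cannot both terminate at (or even pass through) $x$, given that $x$ misses $\alpha$ and has a unique $\beta$-edge. The key insight is that a vertex missing $\alpha$ can lie on an $\alpha/\beta$-path only as an endpoint, and there is at most one $\alpha/\beta$-edge at $x$; so the $\alpha/\beta$-component of $x$ is a single path with $x$ as an endpoint, and the two paths started at $v_j,v_k$ could both reach $x$ only if both $v_j$ and $v_k$ were the opposite endpoint of that same path, contradicting $v_j\ne v_k$. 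Everything else (properness of shifts, behaviour of missing sets under a fan-shift and a path-shift, and reassembling via Claim~\ref{Split}) is routine given Claim~\ref{AdmissMultiFan}, Claim~\ref{LastNotcoloured}, Claim~\ref{Split}, and Proposition~\ref{AlterPathAdmiss}.
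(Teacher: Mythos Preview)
Your overall strategy matches the paper's: show that the two $\alpha/\beta$-paths from $v_j$ and $v_k$ cannot both pass through $x$, pick $i\in\{j,k\}$ so that $P_c(v_i,\alpha/\beta)$ avoids $x$, then glue this path onto the fan prefix and invoke Proposition~\ref{AlterPathAdmiss} via Claim~\ref{Split}. Your argument for the first step is correct (and essentially the paper's, phrased as a degree count on the $\alpha/\beta$-component of~$x$).

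The gap is in the second step. To apply Proposition~\ref{AlterPathAdmiss} with respect to the shifted colouring $d:=c_{F_c(x,e)_{i+1}}$, you need the chain $e_i{}^\frown P_d(v_i,\alpha/\beta)$; but the Vizing chain is defined using $P_c(v_i,\alpha/\beta)$. You assert ``giving $P_d(v_i,\alpha/\beta)=R$'' as if it followed from $x\notin R$, but it does not follow directly: $c$ and $d$ agree off $N(x)$, so $P_c(v_i,\alpha/\beta)\sqsubseteq P_d(v_i,\alpha/\beta)$, yet $P_d$ could in principle extend further through an edge at $x$ whose colour changed under the shift. This is exactly the paper's equation~\eqref{eq:Pc=Pc'}, and it requires a genuine case split. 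When $i=j$ it is easy (the shift touches only colours $c(e_1),\dots,c(e_j)$, none of which is $\alpha$ or $\beta$). When $i=k$ it is not: the colour $\beta$ moves from $e_{j+1}$ to $e_j$, so one must rule out that $P_c(v_k,\alpha/\beta)$ passes through $v_j$ or $v_{j+1}$. The paper handles this by the tie-breaking convention ``if both $j,k$ work, set $i:=j$'': if $v_j$ lay on $P_c(v_k,\alpha/\beta)$ then (since $\beta\in m_c(v_j)$) it would be the last vertex, the reversed path $P_c(v_j,\alpha/\beta)$ would also avoid $x$, and the tie-break would force $i=j$, a contradiction. You never invoke this rule (you write ``fix such an $i$'' with no preference), so your argument as written cannot close the case $i=k$.

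Everything else you wrote (the checks that $\alpha\in m_d(x)$, $\beta\in m_d(v_i)$, and the use of Claims~\ref{AdmissMultiFan}, \ref{LastNotcoloured}, \ref{Split}) is fine; once~\eqref{eq:Pc=Pc'} is established the remainder goes through exactly as you outline.
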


\begin{proof}
If $F_c(x,e)$ is not $c$-augmenting, then by Proposition~\ref{TwoIndices} we have an index $j$ such that $v_j\not=v_{k}$ and $\beta=\alpha_j=\alpha_k$.
In particular, $\beta\in m_c(v_j)\cap m_c(v_k)$.
  
The $\alpha/\beta$-alternating paths $P_c(v_j,\alpha/\beta)$ and $P_c(v_k,\alpha/\beta)$ cannot both use the vertex~$x$. Otherwise, since $\alpha\in m_c(x)$, the union of these two paths will be  a connected graph with all degrees 2 apart three distinct vertices (namely $x,v_i,v_k$) of degree 1 each, which is clearly impossible.
Thus we can pick $i\in\{j,k\}$ be such that no edge of $P_c(v_i,\alpha/\beta)$ contains~$x$; if both $j$ and $k$ satisfy this, we let $i:=j$. This in particular satisfies the second claim of the proposition, namely that $P_c(v_i,\alpha/\beta)$ does not use $x$.

Denote as $c'$ the $F_c(x,e)_{i+1}$-shift of $c$.
By the proof of Claim~\ref{AdmissMultiFan}, we have that $e_i\in U_{c'}$ and $\beta\in m_{c'}(v_i)$.
Let us show that 
 \begin{equation}\label{eq:Pc=Pc'}
  P_c(v_i,\alpha/\beta)=P_{c'}(v_i,\alpha/\beta).
  \end{equation}
Here we have to distinguish two cases.

Suppose first that $i=j$. When we pass from $c$ to $c'$, we modify colours only on the edges $e_0,\dots,e_j$, all of which are incident ot $x$. None of the changed colours can be $\alpha$ (because $\alpha\in m_c(x)$) or $\beta$ (because $e_{j+1}$, the unique edge at $x$ of colour $\beta$, keeps its colour). 
%Thus $c'(f)=c(f)$ for every $f\in E$ with $c(f)\in\{\alpha,\beta\}$. 
Now,~\eqref{eq:Pc=Pc'} trivially follows. %(e.g.\ by By Claim~\ref{Basic}~(iii).

Suppose now that $i=k$. First, let us show that $P_c(v_i,\alpha/\beta)$ uses neither $v_j$ nor $v_{j+1}$.
Recall that $v_k\not=v_j$ by
Proposition~\ref{AlterPathAdmiss}.
%Note that $i\not=j$ because there is $f\in P_c(v_j,\alpha/\beta)$ such that $x\in f$.
If $P_c(v_k,\alpha/\beta)$ uses $v_j$, then $v_j$ is the last vertex of the path because $\beta\in m_c(v_j)$; however then $P_c(v_j,\alpha/\beta)$, as the reversed $P_c(v_k,\alpha/\beta)$, does not use $x$, contradicting the choice of~$i$.
Suppose next that $P_c(v_k,\alpha/\beta)$ uses $v_{j+1}$. Then the path also uses $x$ since the edge $e_{j+1}=\{x,v_{j+1}\}$ has colour $\beta$ under~$c$, again contradicting the choice of $i$. Note that when we pass from $c$ to $c'$, no re-colouring involves the colour $\alpha$ for the same reason as in the case $i=j$. Also, the colour $\beta$ is shifted only once, from $e_{j+1}$ to $e_{j}$. Since the path $P_c(v_k,\alpha/\beta)$ does not use any vertex of $e_{j+1}\cup e_{j}=\{x,v_i,v_{j+1}\}$, it equals $P_{c'}(v_k,\alpha/\beta)$. This finishes the proof of~\eqref{eq:Pc=Pc'}.

%Namely, by Claim~\ref{Basic} we have $P_c(v_k,\alpha/\beta)\sqsubseteq P_{c'}(v_k,\alpha/\beta)$ because there is no $f\in P_c(v_k,\alpha/\beta)$ such that $v_j,v_{j+1}\in f$.
%From the same reason it follows that $P_{c'}(v_k,\alpha/\beta)$ cannot be strictly longer that $P_c(v_k,\alpha/\beta)$.

Now it follows from Proposition~\ref{AlterPathAdmiss} that $P_{c'}(v_i,\alpha/\beta)$, which is equal to $P_{c}(v_i,\alpha/\beta)$ and thus avoids $x$, is  $c'$-augmenting.
By Claim~\ref{Split}, the $P_{c'}(v_i,\alpha/\beta)$-shift of $c'$ is the same as the $V_c(x,e)$-shift of $c$. By combining all this with Claim~\ref{AdmissMultiFan}, we see that $V_c(x,e)$ is
% $c$-proper-shiftable and 
$c$-augmenting.
\end{proof}

We call the index $i$ in Proposition~\ref{VizingPath} the {\it first critical index} and let
 \begin{equation}\label{eq:PcxeDef}
 P_c(x,e):=P_c(v_i,\alpha/\beta)
 \end{equation}
 (Note that  the colours $\alpha$, $\beta$ and the index $i$ are uniquely determined by $x,e$ here.)
With this notation, the corresponding Vizing chain from~\eqref{eq:Wcxe} is
$$	
V_c(x,e)={F_c(x,e)_{i+1}}^\frown P_c(x,e).
$$
 %$P_c(v_i,\alpha/\beta)$ as $P_c(x,e)$.
%With this notation we have $V_c(x,e)={F_c(x,e)_{i+1}}^\frown P_c(x,e)$, the colours $\alpha$ and $\beta$ are uniquely determined by $x,e$.
See Figure~\ref{fi:VC} for an illustration.
\begin{figure}
\includegraphics{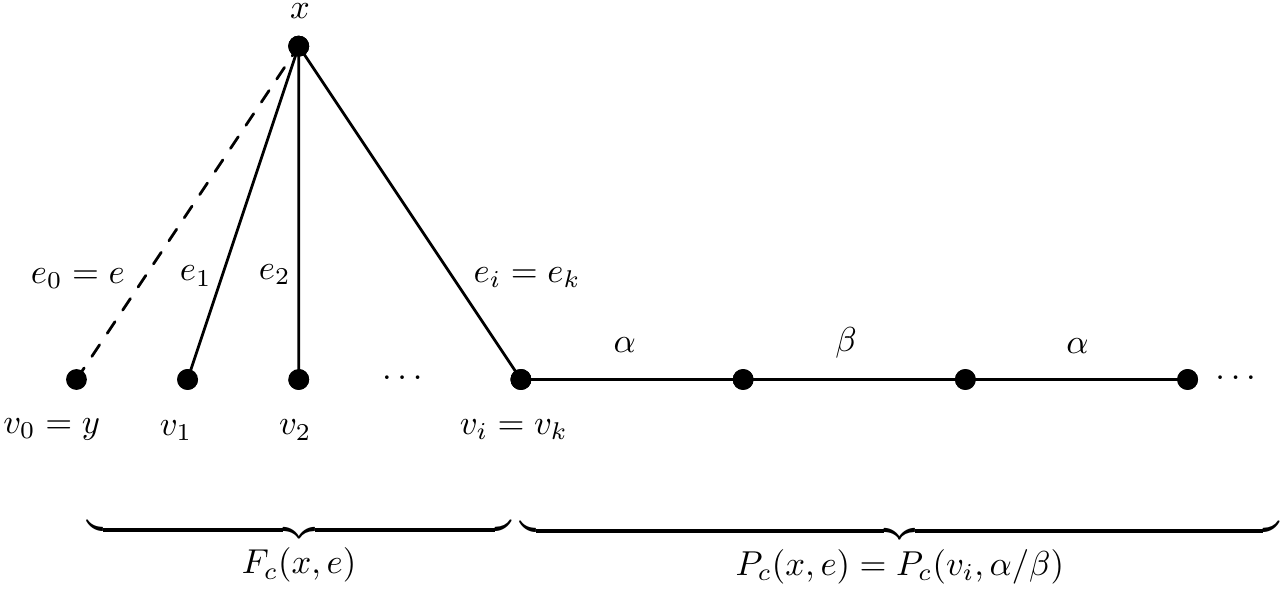}
\caption{Vizing's chain $V_c(e,x)$ with the first critical index $i=k$}
\label{fi:VC}
\end{figure}

Now, Vizing's Theorem for finite multi-graphs can be easily derived.
Suppose that $c;E\to [\Delta+\pi]$ is proper partial colouring that is maximal in the sense that we cannot find another proper partial colouring $d$ with the property that $\dom(c)\subsetneq \dom(d)$.
Suppose that there is some $e\in U_c$ and consider the Vizing chain $V:=V_c(x,e)$ for some $x\in e$.
Since the chain $V$ is edge injective, it is finite.
Thus when we pass to $c'$, the $V$-shift of $c$, the last edge  $e_{l(V)-1}$ of the chain
becomes uncoloured. Since $V$ is $c$-augmenting by Proposition~\ref{VizingPath}, there is a colour missing in $c'$ at both end-points of $e_{l(V)-1}$ and we can extend the proper colouring $c'$ to this edge. 
Then $\dom(c)\subsetneq \dom(c')$, which is a contradiction.

Note that this argument, as stated, does not work for countably infinite multi-graphs since we cannot assume the existence of such a maximal colouring $c$ and if we want to build the colouring by induction it is not clear that we end up with a full colouring, i.e., some of the edges may change their colour infinitely often.
This is caused by the fact that the lengths of the arising chains need not be uniformly bounded (or can be even infinite).
However since the condition in the definition of a proper colouring is local, an easy compactness argument shows that there exists some full proper colouring $c:E\to [\Delta+\pi]$ even in the countably infinite case.

\subsection{Iterated Vizing's chain}\label{ItVizingChain}

The last type of $c$-proper-shiftable chains that we need is the iterated version of the Vizing chain.
 Its definition requires some work and consists of three cases, appearing in~\eqref{eq:IVC1}, \eqref{eq:IVC2} and~\eqref{eq:IVC3}. Such chains are not needed for the proof of Theorem~\ref{th:ap}, the approximate version of Vizing's theorem. The reader interested only in this theorem may skip all forthcoming definitions and results where iterated chains occur.

First, let us very informally describe how we construct an iterated Vizing's chain.
As usual, we have $e\in U_c$ and $x\in e$. Recall that we assume that the fan $F_c(x,e)$ is not augmenting.
We shift the fan until the first critical index~$i$. Now, if we were to follow the above proof of Vizing's theorem, we would be augmenting the current colouring using the alternating path $P_c(x,e)$. Instead, we pick an edge $f$ on this path, shift the colouring along the path so that the selected edge $f$ becomes uncoloured (calling this colouring $c'$), and then construct the augmenting Vizing chain for $c'$ starting with $y$, the farthest end-point of $f$. The 
corresponding  iterated Vizing's chain will be the concatenation of all involved edges (namely, the fan at $x$ until the index~$i$, the first alternating path until $f$, and finally the augmenting Vizing chain for $f\in U_{c'}$ and $y\in f$).
%the fan at $y$ until its critical index and finally the second alternating path) 
However, in order  to avoid the issues when the second Vizing chain uses an edge on which $c$ and $c'$ differ, we define the fan around $y$ slightly differently in fact. Also, we find it more convenient to define the fan at $y$ without using the shift $c'$, i.e., in terms of $c$ only. 

Let us now give the proper definition as well as detailed explanations.

\begin{definition}\label{de:suitable}
We say that $f\in P_c(x,e)$ is \emph{suitable} if
\begin{enumerate}[(a),nosep]
    \item\label{it:suitable1} the graph distance of $f$ and $e$ is more than $3$, i.e., $l(P)>5$ for every path $P$ such that $e,f\in P$,
    \item\label{it:suitable2} $f$ is not the last edge of the chain $P_c(x,e)$,
    \item\label{it:suitable3} $c(f)$ is the minimal colour missing at $x$ (in our notation it is always $\alpha$).
\end{enumerate}
\end{definition}

Recall that every second edge of $P_c(x,e)$ has colour $\alpha$ under the colouring $c$ and, for such edges, the notion of being suitable is just a mild technical restriction. Trivially,
only a constant number (roughly, at most 
$2\Delta^4$) of colour-$\alpha$ edges on $P_c(x,e)$ are not suitable.
% this is used in Subsection~\ref{SuperbEdges}.
The main purpose of this definition is to make sure that the edges $e$ and $f$ are far apart and so the re-colouring of the fan around $x\in e$ does not affect the colours around $f$.
Some of the required consequences of this definition are stated in the following claim.

\begin{claim}\label{Suitable}
Let $P_c(x,e)$ be the $\alpha/\beta$-path corresponding to $e\in U_c$ and $x\in e$. Let $f\in P_c(x,e)$ be suitable.
Denote as $c_f$ the $V_c(x,e)_{i(f)}$-shift of $c$. Let $y$ be the last vertex of $P_c(x,e)_{i(f)}$ and let $z\in f\setminus \{y\}$, i.e., $z$ is the other vertex of $f$.
Then 
\begin{enumerate}[(i), nosep]
    \item\label{it:Suitable1} $m_{c_f}(y)=m_c(y)\cup\{\alpha\}$,
    \item\label{it:Suitable2} $m_{c_f}(z)=m_c(z)\cup \{\beta\}$,
    \item\label{it:Suitable3} $m_{c_f}(u)=m_c(u)$ for every $u\in g$ where $g\in N(y)\setminus f$ and $u\not=y,z$,
    \item\label{it:Suitable4} $\alpha\not\in m_{c_f}(z)$,
    \item\label{it:Suitable5} $\beta\not\in m_{c_f}(y)$.
\end{enumerate}
\end{claim}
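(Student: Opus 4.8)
The plan is to follow the colour classes around the relevant vertices under the ``rotation'' of colours that defines $c_f$, using Definition~\ref{de:suitable} to guarantee that this rotation never reaches the neighbourhood of $f$. Concretely, let $P:=P_c(x,e)=P_c(v_i,\alpha/\beta)$ and write $u_0=v_i,u_1,u_2,\dots$ for its consecutive vertices, so that its $t$-th edge (counting from $0$) joins $u_t$ and $u_{t+1}$ and is coloured $\alpha$ when $t$ is even and $\beta$ when $t$ is odd. Part~\ref{it:suitable3} of Definition~\ref{de:suitable} gives $c(f)=\alpha$, so $f=\{u_{m-1},u_m\}$ for some $m$ with $m-1$ even, and with the notation of the statement $y=u_m$ and $z=u_{m-1}$. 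Let $Q$ be the prefix of $V_c(x,e)={F_c(x,e)_{i+1}}^\frown P$ that ends with the edge $f$, so that $c_f=c_Q$; then $Q$ is $c$-proper-shiftable (every prefix of $V_c(x,e)$ is, by Proposition~\ref{VizingPath} together with Claims~\ref{AdmissMultiFan} and~\ref{Split} and Proposition~\ref{AlterPathAdmiss}), and by Claim~\ref{LastNotcoloured} the edge $f$ is the only edge of $Q$ left uncoloured by $c_Q$. Recall that $c_Q$ agrees with $c$ except that it rotates colours along $Q$: the edge $f$ becomes uncoloured, and every non-final edge of $Q$ inherits the $c$-colour of its successor in $Q$.

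Next I would establish a ``locality'' claim from the distance requirement~\ref{it:suitable1}: none of $y$, $z$, or a vertex $u$ as in~\ref{it:Suitable3} (a neighbour of $y$ along an edge $g\neq f$ with $u\neq z$) can be equal to $x$ or to any fan vertex $v_0,\dots,v_i$, since otherwise the concatenation of $e$, a fan edge at that vertex (or $e$ itself, when the vertex is $v_0$), the relevant edge at $y$, and $f$ is a chain of length at most $4$ that contains both $e$ and $f$, contradicting~\ref{it:suitable1}. As the edges changed by the rotation are exactly the edges of $Q$ — the fan edges $e_0,\dots,e_i$ and the path edges $f_0,\dots,f_{m-1}=f$ — and as $P$ is a path (so each vertex lies on at most the two path edges adjacent to it), it follows that: the only edge of $Q$ incident to $y$ is $f$; the only edges of $Q$ incident to $z$ are $f$ and $f_{m-2}$; and for such a $u$ the edges of $Q$ incident to $u$ are either none, or two \emph{consecutive} path edges $f_{t-1},f_t$ with $1\le t\le m-2$ (the value $t=0$, i.e.\ $u=v_i$, being excluded by the previous sentence).

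With this, the five assertions come out by inspection. At $y$ only $f$ changes, from $\alpha$ to uncoloured, so $m_{c_f}(y)=m_c(y)\cup\{\alpha\}$ (using $\alpha=c(f)\notin m_c(y)$), which is~\ref{it:Suitable1}; and~\ref{it:Suitable5} follows because by~\ref{it:suitable2} the path $P$ has an edge $f_m$ after $f$, this edge has colour $\beta$ (as $m$ is odd), lies outside $Q$ and is incident to $y$, so $\beta$ remains used at $y$. At $z$ the edge $f_{m-2}$ changes from $\beta$ (since $m-2$ is odd) to $c(f)=\alpha$ while $f$ becomes uncoloured; hence $\beta$, carried at $z$ under $c$ only by $f_{m-2}$, becomes missing, whereas $\alpha$ stays used (now on $f_{m-2}$), giving $m_{c_f}(z)=m_c(z)\cup\{\beta\}$ with $\beta\notin m_c(z)$, which is~\ref{it:Suitable2}, and also $\alpha\notin m_{c_f}(z)$, which is~\ref{it:Suitable4}. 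Finally, for~\ref{it:Suitable3}: if no edge of $Q$ is incident to $u$ then nothing at $u$ changes; otherwise the affected edges at $u$ are $f_{t-1}$ and $f_t$ with $t\le m-2$, neither of them the final edge $f$ of $Q$, so $f_{t-1}$ receives $c(f_t)$ and $f_t$ receives $c(f_{t+1})$, and since the $\alpha/\beta$-alternation of $P$ gives $c(f_{t+1})=c(f_{t-1})$, the pair of colours that $\{f_{t-1},f_t\}$ contributes at $u$ equals $\{\alpha,\beta\}$ both before and after the rotation, whence $m_{c_f}(u)=m_c(u)$.

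The only genuinely delicate point is this last case of~\ref{it:Suitable3}: one must notice that $u$ may legitimately sit on the alternating path $P$ close to $f$ and so be incident to recoloured edges, and then see that there the rotation merely transposes $\alpha$ and $\beta$ on two of $u$'s edges and hence preserves its set of missing colours — the contrast being the vertex $z$, where the analogous ``transposition'' degenerates because one of the two edges involved is the terminal edge $f$ of $Q$, which loses its colour altogether, and this is precisely why $\beta$ is added to the missing set at $z$ but nowhere else. Everything else is bookkeeping, kept harmless by~\ref{it:suitable1}, which quarantines the fan's re-colouring far away from $f$.
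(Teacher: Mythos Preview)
Your proof is correct and follows essentially the same approach as the paper's: both rely on the distance condition~\ref{it:suitable1} to keep the fan re-colouring away from the neighbourhood of $f$, and then observe that the remaining path shift merely swaps $\alpha$ and $\beta$ at internal path vertices. The paper organises this via the two-step decomposition $c\to c_i\to c_f$ (first the fan shift, then the path shift), whereas you analyse the combined shift $c\to c_f$ in one pass by explicitly listing which edges of $Q$ meet each of $y$, $z$, $u$; this is a presentational difference only, and your more explicit case analysis for~\ref{it:Suitable3} is arguably clearer than the paper's one-line justification.
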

\begin{proof}
Let $i$ be the first critical index of $F_c(x,e)$ and $c_i$ be the $F_c(x,e)_{i+1}$-shift of $c$.
Denote the last edge of $F_c(x,e)_{i+1}$ as $e_i$ and write $v_i$ for the other vertex in $e_i$ (other than $x$).
It follows from the Claim~\ref{Split} that $c_f$ is the $P$-shift of $c_i$ where $P:={e_i}^\frown P_c(x,e)_{i(f)}$.
Moreover we have from the proof of Proposition~\ref{VizingPath} (specifically from~\eqref{eq:Pc=Pc'}) that $P_c(x,e)=P_{c_i}(v_i,\alpha/\beta)$.

The assumption that $f$ is suitable (namely, Property~\ref{it:suitable1} from the definition) immediately implies that $m_{c}(r)=m_{c_i}(r)$ for every $r\in g$ where $g\in N(y)$ (i.e., for every neighbour vertex of $y$ including  $z$)
as well as for $r=y$.
Also we have $c(f)=c_i(f)=\alpha$ and $u\not=v_i$ for every $u$ satisfying \ref{it:Suitable3} of our claim.
Since $P$ is a path and the only colours that are modified when we pass from $c_i$ to $c_f$ are $\alpha$ and $\beta$,
we see that $m_{c_f}(r)=m_{c_i}(r)$ for every $r\in V\setminus \{x,y,z,v_i\}$.
This proves~\ref{it:Suitable3}.
Items \ref{it:Suitable1}, \ref{it:Suitable2} and~\ref{it:Suitable4} follow from the fact that $f$ is the last edge of $P$, i.e., the colour $\alpha$ at $y$ (resp.\ $\beta$ at $z$) is shifted away from this vertex.
For \ref{it:Suitable5} we only need to recall that Property~\ref{it:suitable2} from the definition of a suitable edge states that $f$ is not the last edge of $P_{c}(x,e)$.
\end{proof}

Suppose that the alternating colours in $P_c(x,e)$ are $\alpha$ and $\beta$ (starting with $\alpha\in m_c(x)$).
Let $f\in P_c(x,e)$ be suitable and $y\in V$ be the last vertex of $P_c(x,e)_{i(f)}$.
We define {\it the maximal $\alpha/\beta$-conditional fan starting at $f$}, denoted as $F_c(x,e\leadsto f)$, as a chain $P=(g_0,\dots, g_m)$ such that $y\in g_i$ for every $i\le m$ and, if we denote the other vertex of $g_i$ by $u_i$, then the following is satisfied
\begin{enumerate}[(a),nosep]
    \item\label{it:cond1} $g_0=f$,
    \item\label{it:cond2} $P$ is edge injective,
    \item\label{it:cond3} $c(g_{i+1})\in m_c(u_{i})$ and it is the minimal available colour (where a colour $\delta$ is \emph{available in the $i$-th step} if $\delta\not=c(e_{j+1})$ for every $j<i$ such that $u_j=u_i$),
    \item \label{it:cond4} $\alpha,\beta \not \in m_c(u_i)$ for every $i<m$,
    \item\label{it:cond5}  if $\alpha,\beta\not\in m_c(u_m)$, then $(g_0,\dots, g_m)$ is maximal with the properties above.
\end{enumerate}
Note that we should rather write $u_i^f$, $g_i^f$ and $y^f$ to stress that those objects depend on the choice of $f$.
This will be however omitted in the cases when we work with only one $f$.

Intuitively we define $F_c(x,e\leadsto f)$ as follows.
Consider $c'$, the $V_c(x,e)_{i(f)}$-shift of $c$.
Then use the same construction as in the case of the original fan with parameters $y$ and $f$  but with the lists $m_c(u)$ (not $m_{c'}(u)$, the point is that we want  $\beta$ not to be in the list that corresponds to $z\in f$, $z\not=y$).
The construction terminates either from the same reasons as in the original fan construction or, and this is important, if we reach a vertex $u_m$ such that $\alpha$ or $\beta$ is in $m_c(u_m)$ (note that in such a case $u_m\not=z$ since $z$ is an internal vertex of an $\alpha/\beta$-alternating path).

\begin{proposition}\label{ConditionalFanAdmiss}
Let $f\in P_c(x,e)$ be suitable.
Then $P:={V_c(x,e)_{i(f)-1}}^\frown F_c(x,e\leadsto f)$ is $c$-proper-shiftable.
\end{proposition}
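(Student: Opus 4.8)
The plan is to decompose $P$ at the index $i(f)$ into a prefix coming from the Vizing chain and a suffix coming from the conditional fan, and to verify $c$-proper-shiftability by applying Claim~\ref{Split} together with a local colour-conflict analysis at the two joints. Concretely, write $P = Q\,^\frown R$, where $Q := V_c(x,e)_{i(f)-1}$ is the prefix of the Vizing chain ending at the edge just before $f$ in $P_c(x,e)$, and $R := F_c(x,e\leadsto f)$ is the conditional fan, whose first edge is $f$. First I would note that $Q$ ends at the edge $h$ which is the $(i(f)-1)$-st edge of $P_c(x,e)$, and $h\cap f\neq\emptyset$ (they share the vertex $y$, the last vertex of $P_c(x,e)_{i(f)}$), so the concatenation $P = Q\,^\frown R$ is a legitimate chain. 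Edge-injectivity of $P$ needs a brief argument: $Q$ is edge-injective (being a prefix of the $c$-augmenting, hence edge-injective, chain $V_c(x,e)$), $R$ is edge-injective by construction (Property~\ref{it:cond2}), and $Q$ and $R$ share no edges because every edge of $R\setminus\{f\}$ is incident to $y$ but is coloured by $c$ (the fan edges are all in $\dom(c)$), whereas the edges of $Q$ incident to $y$ are exactly the two edges of $P_c(x,e)$ at $y$ — one of which is $f$'s predecessor $h$ (already in $Q$) and the ``other'' is $f$ itself (not in $Q$); Property~\ref{it:suitable1} (graph distance $>3$) guarantees there is no accidental coincidence between fan edges at $y$ and the fan edges at $x$ sitting in $Q$. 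Also $e_0 = e \in U_c$ is the unique uncoloured edge in $P$ since all subsequent edges lie in $P_c(x,e)\cup F_c(x,e)$-type sets, all contained in $\dom(c)$.

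Next I would establish properness of the shift $c_P$. By Claim~\ref{Split} applied at index $i(f)-1$ (using that every prefix $V_c(x,e)_j$ is $c$-proper-shiftable, which follows from Proposition~\ref{VizingPath} via the structure in Proposition~\ref{AlterPathAdmiss} and Claim~\ref{AdmissMultiFan}), the chain $R$ starting at $f$ is $c_f$-shiftable, where $c_f$ is the $V_c(x,e)_{i(f)}$-shift of $c$, i.e.\ the $Q\,^\frown(f)$-shift; and the $P$-shift of $c$ equals the $R$-shift of $c_f$. So it suffices to show that the $R$-shift of $c_f$ is proper. Here is where Claim~\ref{Suitable} does the work: it tells us that $c_f$ agrees with $c$ on all the relevant missing-colour lists around $y$, except that $m_{c_f}(y) = m_c(y)\cup\{\alpha\}$ and $m_{c_f}(z) = m_c(z)\cup\{\beta\}$ (where $z$ is the other endpoint of $f$), and $\alpha\notin m_{c_f}(z)$, $\beta\notin m_{c_f}(y)$. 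The conditional fan $R$ was designed precisely so that shifting it along introduces, at each secondary vertex $u_i$ of the fan, only the colour $c(g_{i+1}) = \alpha_i$, and by Property~\ref{it:cond3} these colours are distinct among repeated occurrences of the same $u_i$ and lie in $m_c(u_i)$; Property~\ref{it:cond4} ensures $\alpha,\beta\notin m_c(u_i)$ for $i<m$, so none of the shifted colours can be $\alpha$ or $\beta$, hence no conflict with the modified lists at $y$ or $z$. Thus the argument of Claim~\ref{AdmissMultiFan} goes through verbatim with $c$ replaced by $c_f$: no conflict at $y$ (it lies on every fan edge), and no conflict at any $u_i$. The only new thing to check, compared to the plain fan, is that the shift at $f$ itself — uncolouring $f$ and recolouring it — does not create a conflict; but $\alpha_0 = c(g_1) \in m_c(z) \subseteq m_{c_f}(z)$ by construction, and this colour is not used at $y$ in $c_f$ because $\alpha_0 \in m_{c}(z)$ means... actually $f$ loses its colour $\alpha$ and the conflict check at $g_1$'s endpoint is covered by the generic $u_i$ analysis.

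The step I expect to be the main obstacle is the bookkeeping around the first joint, namely verifying that the transition from $c$ to $c_f$ genuinely localizes: one must be sure that recolouring the initial fan $F_c(x,e)$ at $x$ and shifting along the first portion of $P_c(x,e)$ up to $f$ does not disturb any missing-colour list that the conditional fan $R$ relies on. This is exactly the content of Claim~\ref{Suitable}, and the suitability hypotheses (Properties~\ref{it:suitable1}–\ref{it:suitable3} of Definition~\ref{de:suitable}) were introduced to make it true — the distance-$>3$ condition keeps the $y$-neighbourhood untouched, the ``not the last edge'' condition keeps $\beta\notin m_{c_f}(y)$, and $c(f)=\alpha$ pins down which colours get shifted. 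So the real work is just quoting Claim~\ref{Suitable} correctly and then observing that, with those list modifications in hand, the conditional fan's defining properties \ref{it:cond1}–\ref{it:cond5} are precisely the ones needed to rerun the plain-fan properness argument. I would therefore structure the proof as: (1) decompose $P = Q\,^\frown R$ and check it is a chain with $e_0$ its unique uncoloured edge; (2) check edge-injectivity using suitability Property~\ref{it:suitable1}; (3) invoke Claim~\ref{Split} to reduce to showing the $R$-shift of $c_f$ is proper; (4) invoke Claim~\ref{Suitable} to describe $c_f$ near $y,z$; (5) rerun the Claim~\ref{AdmissMultiFan} argument with these modified lists, using fan-properties \ref{it:cond3}–\ref{it:cond4} to rule out $\alpha,\beta$ among the shifted colours. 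This yields that $P$ is $c$-proper-shiftable.
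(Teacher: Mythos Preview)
Your approach is essentially correct and reaches the same conclusion, but it differs from the paper's in one key respect. The paper does not verify properness of the $R$-shift of $c_f$ by a direct conflict analysis. Instead, it reorders the colours so that $\beta$ becomes largest, forms the ordinary fan $Q := F_{c_f}(y,f)$ with respect to this new order, and proves that the conditional fan $F_c(x,e\leadsto f)$ is a \emph{prefix} of $Q$. Then Claim~\ref{AdmissMultiFan}, applied to $Q$ and hence to each of its prefixes, gives $c_f$-proper-shiftability immediately. Establishing the prefix relation requires a short case analysis on whether $u_j = u_0$, using that $m_{c_f}(u_0) = m_c(u_0)\cup\{\beta\}$ has at least $\pi+1$ elements so that the artificially demoted $\beta$ is never the minimal available colour. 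Your direct verification is also valid and arguably more elementary; the paper's route has the conceptual payoff of exhibiting the conditional fan as a genuine fan in disguise, which explains why Properties~\ref{it:cond3}--\ref{it:cond5} were set up the way they were.

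One small slip in your edge-injectivity argument: the predecessor $h$ of $f$ on $P_c(x,e)$ shares the vertex $z$ with $f$, not $y$ (recall $y$ is the \emph{last} vertex of $P_c(x,e)_{i(f)}$, i.e.\ the endpoint of $f$ farther from the start of the path). In fact no edge of $Q = V_c(x,e)_{i(f)-1}$ contains $y$ at all: the fan edges at $x$ miss $y$ by the distance condition~\ref{it:suitable1}, and the path portion of $Q$ terminates at $z$. Since every edge of $R$ contains $y$, edge-disjointness of $Q$ and $R$ follows at once. Also, your invocation of Property~\ref{it:cond4} (that $\alpha,\beta\notin m_c(u_i)$ for $i<m$) is not actually needed for properness here; all you need is $c(g_{i+1})\in m_c(u_i)\subseteq m_{c_f}(u_i)$ together with distinctness of these colours at repeated occurrences of the same $u_i$, and both follow from~\ref{it:cond3} and Claim~\ref{Suitable}.
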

\begin{proof}
Clearly, $P$ is shiftable so we only need to show that the partial colouring $c'$, the $P$-shift of $c$, is proper.
It follows from Proposition~\ref{VizingPath} that the shift $c_f$ of $c$ along $V_c(x,e)_{i(f)}$ is a proper partial colouring.
Let $y\in f$ be the last vertex of $P_c(x,e)_{i(f)}$.
Suppose that we keep the same order on the colours except we make the colour $\beta$ be the biggest in this order, and with this new ordering we define $Q:=F_{c_f}(y,f)$.
If we show that $F_c(x,e\leadsto f)\sqsubseteq Q$, then this finishes the proof because it follows from Claim~\ref{AdmissMultiFan} that the prefix $Q_i$ is $c_f$-proper-shiftable for every $i\le l(Q)$.  

Recall that the elements of $F_c(x,e\leadsto f)$ are denoted as $(g_0,\dots,g_m)$ and $g_i=\{y,u_i\}$ for $i\le m$.

By Claim~\ref{Suitable} we have for every $j\le m=l(F_c(x,e\leadsto f))-1$  that $m_c(u_j)=m_{c_f}(u_j)$ if $u_j\not=u_0$ and that $m_c(u_j)\cup\{\beta\}=m_{c_f}(u_j)$ if $u_j=u_0$.
Note that the edges where $c$ and $c_f$ differ are restricted to those containing $x$
and to the edges of the $\alpha/\beta$-alternating path until the vertex $y$. Thus, since $y$ is suitable, we have
 $c(g)=c_f(g)$ for every $g\in N(y)\setminus \{f\}$.

Suppose on the contrary that there is $j<m$ such that $F_c(x,e\leadsto f)_{j+1}\sqsubseteq Q$ but $F_c(x,e\leadsto f)_{j+2}\not\sqsubseteq Q$.
Thus $g_{j+1}$ is the first edge of $F_c(x,e\leadsto f)$ which either is not present or is in a different position in $Q$.

Suppose first that $u_j\not=u_0$. We have $m_{c_f}(u_j)=m_c(u_j)$. Also,  $\alpha,\beta\not\in m_c(u_j)$ by Property~\ref{it:cond4} of the definition of $F_c(x,e\leadsto f)$.
%, since $j<l(F_c(x,e\leadsto f))-1$.
Therefore the minimal available colour in the $j$-th step is in both cases the same, denote it as $\delta$.
Thus $c(g_{j+1})=\delta$. Since $c(g_{j+1})=c_f(g_{j+1})$, we must have that the $(j+2)$-nd edge of $Q$ is
also $g_{j+1}$, a contradiction.

Suppose now that $u_j=u_0$. Consider the step when we construct the $(j+2)$-nd edge of $Q$, after having
constructed $(g_0,\dots,g_j)\sqsubseteq Q$.
Here we have $|m_{c_f}(u_j)|\ge \pi +1$.
Since $|\{k<j:u_k=u_0\}|<\pi$ we can find at least two available colours.
It follows from the re-ordering of the colours that the minimal colour, call it $\delta$, is not~$\beta$. Also, $\delta\in m_{c_f}(u_j)$ cannot be equal to $\alpha$ which is present at $u_j=u_0=z$ in~$c_f$ by Claim~\ref{Suitable}\ref{it:Suitable4}. The same $\delta$ is the minimal available colour at the $j$-th step for $F_c(x,e\leadsto f)$ because $m_c(z)=m_{c_f}(z)\setminus\{\beta\}$ by Claim~\ref{Suitable}\ref{it:Suitable2}. Thus, we have again that $c(g_{j+1})=\delta$ and, since $c_f(g_{j+1})=c(g_{j+1})$, that  the edge $g_{j+1}$ is also included as
the $(j+2)$-nd edge into $Q$. This contradicts the choice of~$j$.
\end{proof}

We distinguish three types of suitable edges.
We say that a suitable $f\in P_c(x,e)$ is
\begin{itemize}
    \item of {\it Type $0$} if ${V_c(x,e)_{i(f)-1}}^\frown F_c(x,e\leadsto f)$ is augmenting,
    \item of {\it Type I} if it is not of Type $0$ and $\beta\in m_c(u_m)$ (recall that $g_m=\{y,u_m\}$ is the last edge of $F_c(x,e\leadsto f)$),
    \item of {\it Type II} if it is not of Type $0$ or Type I.
\end{itemize}

Let us first make the following easy observation.

\begin{claim}\label{cl:AlphaT0}
If $f$ is a suitable edge and $\alpha\in m_c(u_m)$, then $f$ is of Type $0$.
\end{claim}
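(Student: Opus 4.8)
\textbf{Plan of proof for Claim~\ref{cl:AlphaT0}.}

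The goal is to show that if $f$ is suitable and $\alpha \in m_c(u_m)$, where $g_m = \{y, u_m\}$ is the last edge of the conditional fan $F_c(x,e\leadsto f)$, then $P := {V_c(x,e)_{i(f)-1}}^\frown F_c(x,e\leadsto f)$ is $c$-augmenting (i.e.\ $f$ is of Type $0$). By Proposition~\ref{ConditionalFanAdmiss}, $P$ is already known to be $c$-proper-shiftable, so all that remains is to exhibit a colour missing, in the shift $c_P$, at \emph{both} endpoints of the last edge $g_m$ of $P$. Let $c_P$ denote the $P$-shift of $c$; by Claim~\ref{LastNotcoloured} the last edge $g_m$ is the unique uncoloured edge of $P$ under $c_P$, so $c_P$ restricted to $P\setminus\{g_m\}$ is a proper colouring of those edges.

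The key step is to identify what colours $c_P$ leaves missing at $y$ and at $u_m$. First I would argue that $\alpha \in m_{c_P}(y)$: passing from $c$ to $c_f$ (the shift along $V_c(x,e)_{i(f)}$) puts $\alpha$ into $m_{c_f}(y)$ by Claim~\ref{Suitable}\ref{it:Suitable1}, and then the further shift along $F_c(x,e\leadsto f)$ up to $g_m$ only re-colours edges of the conditional fan, which are all incident to $y$; none of those edges receives colour $\alpha$, because the available colours $c(g_{i+1})$ lie in $m_c(u_i)$ and by Property~\ref{it:cond4} in the definition of the conditional fan $\alpha\notin m_c(u_i)$ for every $i<m$. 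So no edge at $y$ gets colour $\alpha$ in the shift, hence $\alpha$ stays missing at $y$ under $c_P$. Second, I would argue that $\alpha \in m_{c_P}(u_m)$: by hypothesis $\alpha \in m_c(u_m)$, and (using the argument in the proof of Proposition~\ref{ConditionalFanAdmiss}, or directly Claim~\ref{Suitable}) the list at $u_m$ is essentially unaffected when we shift along $P$ — the only edge at $u_m$ whose colour changes in the shift is $g_m$ itself, which becomes uncoloured; in particular no new colour is introduced at $u_m$, so $m_c(u_m)\subseteq m_{c_P}(u_m)$ and therefore $\alpha\in m_{c_P}(u_m)$. (Here I'd need the small bookkeeping point that $u_m \neq z$, which holds because $z$ is an internal vertex of the $\alpha/\beta$-alternating path so $\alpha \notin m_c(z)$, contradicting $\alpha \in m_c(u_m)$ if they coincided; and $u_m \neq x$ since the conditional fan edges avoid $x$ — this follows because $f$ is suitable and far from $e$, and $y$, being on $P_c(x,e)$ beyond distance $3$, is not adjacent to $x$.)

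Combining the two points, $\alpha$ is a common missing colour at both endpoints of the last edge $g_m$ in $c_P$, so $P$ is $c$-augmenting by Definition~\ref{cAdmiss}(d), i.e.\ $f$ is of Type $0$. The main thing to be careful about is the disjointness/locality bookkeeping: one must make sure that the vertex $y$ and all of $u_0,\dots,u_m$ are far enough from the edges re-coloured in the fan-at-$x$ part and in the $\alpha/\beta$-path part of $V_c(x,e)_{i(f)}$, so that "the only edge at $u_m$ whose colour changes is $g_m$" is actually correct; but this is exactly what Property~\ref{it:suitable1} of suitability (graph distance of $f$ and $e$ greater than $3$) together with Claim~\ref{Suitable} is designed to guarantee, so the argument should go through smoothly without any genuinely hard step.
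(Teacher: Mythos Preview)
Your proposal is correct and follows essentially the same approach as the paper's proof: both show that $\alpha$ is a common missing colour at $y$ and $u_m$ under the $P$-shift $c_P$, using Claim~\ref{Suitable} to control the first part of the shift (from $c$ to $c_f$) and Property~\ref{it:cond4} of the conditional fan to control the second part. The paper packages the second step slightly more concisely by observing that \emph{no} edge of colour $\alpha$ or $\beta$ is touched in the shift from $c_f$ to $c_P$ (since every $c(g_{j+1})\in m_c(u_j)$ and $\alpha,\beta\notin m_c(u_j)$ for $j<m$), which immediately preserves $\alpha\in m_{c_f}(y)\cap m_{c_f}(u_m)$; your edge-by-edge check at $u_m$ reaches the same conclusion, and your claim that ``the only edge at $u_m$ whose colour changes is $g_m$'' is indeed correct, since $\alpha\in m_c(u_m)$ together with Property~\ref{it:cond4} forces $u_j\neq u_m$ for all $j<m$, and $\alpha\in m_c(u_m)$ also prevents $u_m$ from lying on the $\alpha/\beta$-path prefix.
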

\begin{proof}
Let $c'$ be the ${V_c(x,e)_{i(f)-1}}^\frown F_c(x,e\leadsto f)$-shift of $c$.
It follows from the Claim~\ref{Split} that $c'$ is also the $F_c(x,e\leadsto f)$-shift of $c_f$.
By Claim~\ref{Suitable}\ref{it:Suitable1}, where we have $f=\{y,z\}$, we have $m_{c_f}(y)=m_c(y)\cup\{\alpha\}$. By Items~\ref{it:Suitable2}
and~\ref{it:Suitable4} of Claim~\ref{Suitable}, we have $\alpha\not \in m_c(z)$ and thus $u_m$ cannot be equal to $u_0=z$.
Claim~\ref{Suitable}\ref{it:Suitable3} gives that $m_{c_f}(u_m)=m_c(u_m)$; in particular, $\alpha\in m_{c_f}(u_m)$.
Note that during the shift from $c_f$ to $c'$ no colours $\alpha$ and $\beta$ are changed.
This implies that $\alpha\in m_{c'}(u_m)\cap m_{c'}(y)$, as required.
\end{proof}

Our aim is now to define the {\it iterated Vizing chain} $W_c(x,e\leadsto f)$ for every $e\in U_c$, $x\in e$ and some suitable $f\in P_c(x,e)$.
We handle each type separately.

\medskip\noindent{\bf Type 0.}
If $f\in P_c(x,e)$ is a suitable edge of Type $0$ then we put
\begin{equation}\label{eq:IVC1}
 W_c(x,e\leadsto f):={V_c(x,e)_{i(f)-1}}^\frown F_c(x,e\leadsto f).
 \end{equation}
 \comm{
Note that if $\alpha\in m_c(u_m)$, then $f$ is of Type $0$.
To see this denote as $c'$ the $W_c(x,e\leadsto f)$-shift of $c$.
It follows from the Claim~\ref{Split} that $c'$ is also the $F_c(x,e\leadsto f)$-shift of $c_f$.
By Claim~\ref{Suitable} we have $m_{c_f}(y)=m_c(y)\cup\{\alpha\}$ and $u_m\not=u_0$ because $\alpha\not \in m_c(u_0)$.
Another use of Claim~\ref{Suitable} yields $m_{c_f}(u_m)=m_c(u_m)$, especially $\alpha\in m_{c_f}(u_m)$.
It remains to realize that during the shift from $c_f$ to $c'$ no colours $\alpha$ and $\beta$ are changed.
This implies that $\alpha\in m_{c'}(u_m)\cap m_{c'}(y)$.
}

\medskip\noindent{\bf Type I.}
Suppose that a suitable edge $f$ is of Type I.
By the definition we have $\beta\in m_c(u_m)$ and $|F_c(x,e\leadsto f)|=m+1$. Also,
$\alpha\not\in m_c(u_m)$ by Claim~\ref{cl:AlphaT0}.
We call $m$ the {\it second critical index}.
Note that $u_0\not=u_m$ because, since $f$ is suitable, we must have $\beta\not\in m_c(u_0)$ (as $|P_c(x,e)|\ge 3$
by Definition~\ref{de:suitable}\ref{it:suitable1}).

\comm{Our aim is to define \comment{OP: we should be precise when $W_c(x,e\leadsto f)$ is / is not defined}
\begin{equation}\label{eq:IVC2}
 W_c(x,e\leadsto f)={V_c(x,e)_{i(f)-1}}^\frown {F_c(x,e\leadsto f)}^\frown P_{c}(u_m,\alpha/\beta).
 \end{equation}
In order to do that we need to rule out some edges.
}

\begin{definition}\label{de:SuperbI}
We say that a suitable $f\in V_c(x,e)$ of Type I is \emph{superb of Type I} if, in the above notation,
$P_c(u_m,\alpha/\beta)=P_{c_f}(u_m,\alpha/\beta)$. In this case we define 
$$P_c(x,e\leadsto f):=P_c(u_m,\alpha/\beta)
$$
 and
\begin{equation}\label{eq:IVC2}
 W_c(x,e\leadsto f):={V_c(x,e)_{i(f)-1}}^\frown {F_c(x,e\leadsto f)}^\frown P_{c}(u_m,\alpha/\beta).
 \end{equation}
%\begin{itemize}
	%\item $P_c(u_m,\alpha/\beta)=P_{c_f}(u_m,\alpha/\beta)$.
%\end{itemize}
\end{definition}

\begin{figure}
\includegraphics{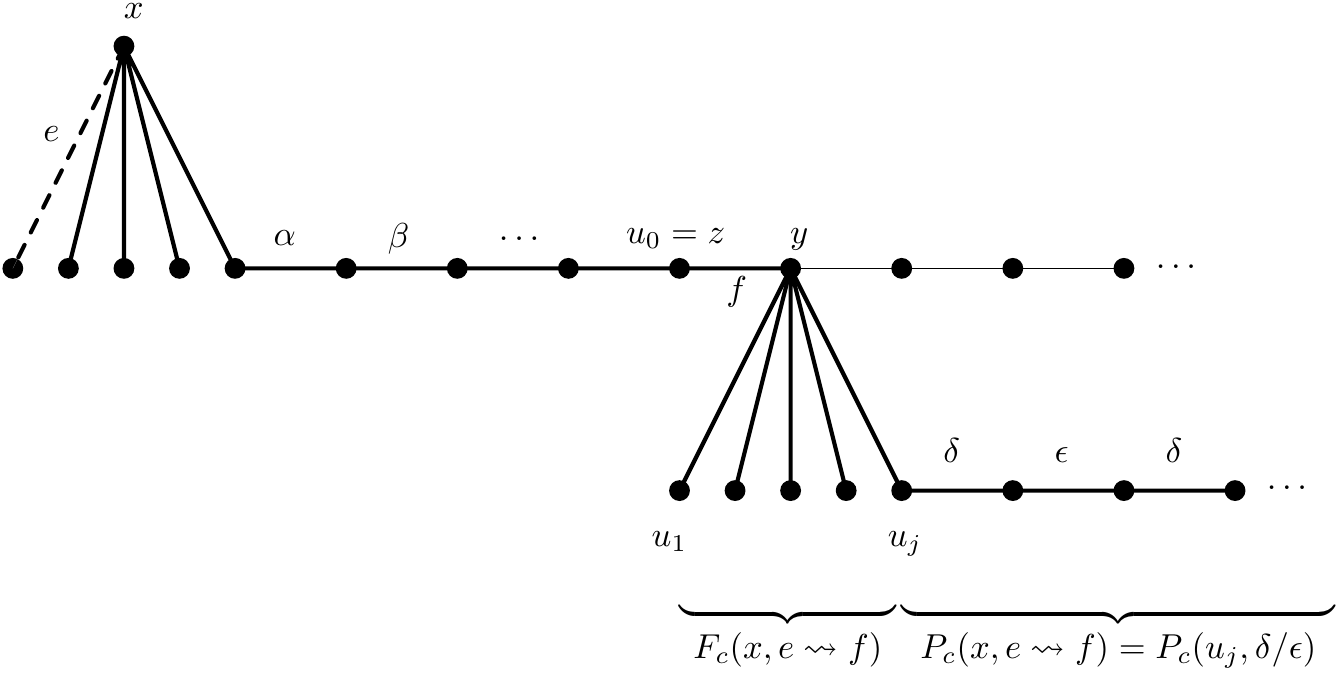}
\caption{Iterated Vizing's chain $W_c(x,e\leadsto f)$ (edges in bold) for a suitable edge $f$ of Type I or II 
%; for a Type I edge $f$ we have $\{\epsilon,\delta\}=\{\alpha,\beta\}$
}
\label{fi:IVC}
\end{figure}

For an illustration, see Figure~\ref{fi:IVC} (where, for a Type I edge $f$, we have $\epsilon=\alpha$ and $\delta=\beta$).
Note that $W_c(x,e\leadsto f)$ is undefined if $f$ is a suitable but not superb edge of Type~I. 
The requirement that $f$ is superb is needed for the following result.

\begin{proposition}\label{TypeI}
Let $f\in V_c(x,e)$ be superb of Type I.
Then $W_c(x,e\leadsto f)$ is $c$-augmenting.
\end{proposition}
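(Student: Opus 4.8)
The plan is to reduce the claim to a sequence of invocations of the results already established for alternating paths and fans, tracking carefully where the two colourings $c$ and $c_f$ agree. First I would recall, via Claim~\ref{Split}, that the chain $W_c(x,e\leadsto f)$ decomposes as ${V_c(x,e)_{i(f)-1}}^\frown F_c(x,e\leadsto f)^\frown P_c(u_m,\alpha/\beta)$, where the prefix ${V_c(x,e)_{i(f)-1}}^\frown F_c(x,e\leadsto f)$ is exactly the Type~0/I chain whose $c$-shift is $c_f$-related; indeed $c_f$ is the $V_c(x,e)_{i(f)}$-shift of $c$ and, by Proposition~\ref{ConditionalFanAdmiss}, the chain $P:={V_c(x,e)_{i(f)-1}}^\frown F_c(x,e\leadsto f)$ is $c$-proper-shiftable. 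The key point of superbness of Type~I is that $P_c(u_m,\alpha/\beta)=P_{c_f}(u_m,\alpha/\beta)$, which lets me carry the analysis over to $c_f$ without worrying about discrepancies on the alternating path.

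Next I would verify that $P_{c_f}(u_m,\alpha/\beta)$ is genuinely augmentable as an alternating-path continuation. By Claim~\ref{Suitable} and the definition of a Type~I superb edge we have $\beta\in m_c(u_m)$, hence $\beta\in m_{c_f}(u_m)$ since, by Claim~\ref{Suitable}\ref{it:Suitable3}, $m_{c_f}(u_m)=m_c(u_m)$ (using $u_m\ne u_0=z$, which holds because $\beta\notin m_c(z)$ as $f$ is suitable). Also $\alpha\notin m_c(u_m)$ by Claim~\ref{cl:AlphaT0}, so $\alpha\ne\beta$ and the alternating $\alpha/\beta$-path starting at $u_m$ is non-degenerate. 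Now consider the colouring $c_f':=c_{F_c(x,e\leadsto f)\text{-shift of }c_f}$, i.e.\ the shift of $c_f$ along $F_c(x,e\leadsto f)$ (equivalently, by Claim~\ref{Split}, the $P$-shift of $c$), which is proper by Proposition~\ref{ConditionalFanAdmiss}. Under this shift $g_m$ becomes uncoloured and $\beta\in m_{c_f'}(u_m)$ still holds; moreover $\alpha$ is never touched during the conditional-fan shift (the conditional fan uses lists $m_c(u_i)$ with $\alpha,\beta\notin m_c(u_i)$ for $i<m$ by Property~\ref{it:cond4}, so neither $\alpha$ nor $\beta$ is assigned along $F_c(x,e\leadsto f)$), whence $P_{c_f}(u_m,\alpha/\beta)=P_{c_f'}(u_m,\alpha/\beta)$ as well, and $u_m$ plays the role of the free endpoint of an $\alpha/\beta$-path that has not been disturbed. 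Then I would apply Proposition~\ref{AlterPathAdmiss} (with the uncoloured edge $g_m$ whose endpoints $y,u_m$ miss respectively $\alpha$ — since $y$ is the common vertex of the fan and $\alpha\in m_{c_f'}(y)$ by Claim~\ref{Suitable}\ref{it:Suitable1} together with the fact that $\alpha$ is untouched — and $\beta$) to conclude that $g_m^\frown P_{c_f'}(u_m,\alpha/\beta)$ is $c_f'$-augmenting provided its far endpoint is not $y$; the latter follows because an $\alpha/\beta$-path through $y$ would force $\alpha\notin m(y)$, contradicting $\alpha\in m_{c_f'}(y)$. (If the path is empty, $\alpha\in m_{c_f'}(u_m)$, but that case is excluded by $\alpha\notin m_c(u_m)=m_{c_f}(u_m)$ and the fact that $\alpha$ is untouched; so the path is genuinely non-empty and ends away from $y$.)

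Finally I would assemble the pieces using Claim~\ref{Split} twice: the $W_c(x,e\leadsto f)$-shift of $c$ equals the $P_{c_f'}(u_m,\alpha/\beta)$-shift of $c_f'$, which equals the $\bigl(g_m^\frown P_{c_f'}(u_m,\alpha/\beta)\bigr)$-shift of $c_f'$ relative to the uncoloured $g_m$; since the latter chain is $c_f'$-augmenting and $c_f'$ is the $\bigl({V_c(x,e)_{i(f)-1}}^\frown F_c(x,e\leadsto f)\bigr)$-shift of $c$, edge-injectivity of the whole concatenation (guaranteed because $P_c(x,e\leadsto f)=P_c(u_m,\alpha/\beta)$ avoids the earlier edges — the alternating path stays in the $\alpha/\beta$-subgraph, which is disjoint from the fan edges $g_i$ of other colours, and the suitability gap keeps it away from the $x$-fan) shows that $W_c(x,e\leadsto f)$ is $c$-proper-shiftable and that its $c$-shift colours every edge except the last, which misses a common colour at both endpoints. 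Hence $W_c(x,e\leadsto f)$ is $c$-augmenting.

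\textbf{Main obstacle.} The delicate part is the edge-injectivity and ``non-interference'' bookkeeping: one must be sure that the second alternating path $P_c(u_m,\alpha/\beta)$ does not re-enter the conditional fan $F_c(x,e\leadsto f)$, does not hit the vertex $y$, and does not run into the portion of the first Vizing chain where $c$ and $c_f$ differ — and that superbness ($P_c(u_m,\alpha/\beta)=P_{c_f}(u_m,\alpha/\beta)$) plus the suitability distance condition are exactly what make all of this work, so the proof will mostly consist of spelling out these disjointness facts carefully rather than any substantial new idea.
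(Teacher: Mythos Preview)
Your overall strategy matches the paper's: pass to $c':=c_f'$ (the $P$-shift of $c$, equivalently the $F_c(x,e\leadsto f)$-shift of $c_f$), show that $P_c(u_m,\alpha/\beta)=P_{c_f}(u_m,\alpha/\beta)=P_{c'}(u_m,\alpha/\beta)$, apply Proposition~\ref{AlterPathAdmiss} to $g_m^\frown P_{c'}(u_m,\alpha/\beta)$, and glue via Claim~\ref{Split}. Two specific steps, however, are not correct as written.

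\textbf{The ``$y$ is not the last vertex'' step.} Your justification is backwards. You argue that if the $\alpha/\beta$-path reached $y$ then $\alpha\notin m_{c_f'}(y)$, contradicting $\alpha\in m_{c_f'}(y)$. But $\alpha\in m_{c_f'}(y)$ is precisely what makes $y$ a \emph{candidate} endpoint under $c_f'$: the path could enter $y$ via a $\beta$-edge and stop there because no $\alpha$-edge is available. The correct argument (as in the paper) uses the equality $P_{c'}(u_m,\alpha/\beta)=P_c(u_m,\alpha/\beta)$ and checks the endpoint condition with respect to $c$, not $c'$: since $f$ is suitable (in particular not the last edge of $P_c(x,e)$), under $c$ the vertex $y$ carries both an $\alpha$-edge (namely $f$) and a $\beta$-edge, so $y$ cannot be the last vertex of any $c$-alternating $\alpha/\beta$-path.

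\textbf{Edge injectivity.} Your disjointness bookkeeping handles the conditional fan $F_c(x,e\leadsto f)$ (colours $\ne\alpha,\beta$) and, loosely, the $x$-fan, but the delicate case is that $P_c(u_m,\alpha/\beta)$ might share edges with $P_c(x,e)_{i(f)-1}$: those edges are precisely in the $\alpha/\beta$-subgraph, so ``stays in the $\alpha/\beta$-subgraph'' gives no separation there, and ``the suitability gap'' only controls the distance from $f$ to $x$, not the reach of the second alternating path (which can be arbitrarily long). The paper's argument covers all of $P={V_c(x,e)_{i(f)-1}}^\frown F_c(x,e\leadsto f)$ at once: if $h$ lies in both $P_c(u_m,\alpha/\beta)$ and $P$, then by superbness $c(h)=c_f(h)\in\{\alpha,\beta\}$; since the shift $c_f\to c'$ touches no edge of colour $\alpha$ or $\beta$, $c'(h)=c_f(h)=c(h)$; but $h\in P$ forces $c'(h)\ne c(h)$ by Claim~\ref{LastNotcoloured}, a contradiction. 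This is exactly where superbness does the work for injectivity, and your sketch does not supply it.
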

\begin{proof}
First we show that $W_{c}(x,e\leadsto f)$ is edge injective.
By Proposition~\ref{ConditionalFanAdmiss} we have that
$$P:={V_c(x,e)_{i(f)-1}}^\frown {F_c(x,e\leadsto f)}$$
is edge injective and moreover $c'$, the $P$-shift of $c$, is also the $F_c(x,e\leadsto f)$-shift of $c_f$.
This implies that $W_c(x,e\leadsto f)$ is not edge injective if and only if there is $h\in P_c(x,e\leadsto f)\cap P$.
The critical observation is that since $f$ is superb of Type I we must have $c(h)=c_f(h)\in \{\alpha,\beta\}$.
However the shift from $c_f$ to $c'$ does not use edges of colour $\alpha$ and $\beta$, in particular $c'(h)=c_f(h)$.
But since $h\in P$ we must have $c'(h)\not=c(h)$ by Claim~\ref{LastNotcoloured}. Therefore $W_c(x,e\leadsto f)$ is edge injective.

Similar argument shows that $P_c(u_m,\alpha/\beta)=P_{c_f}(u_m,\alpha/\beta)=P_{c'}(u_m,\alpha/\beta)$.
Namely, the first equality holds since $f$ is superb of Type I and the second equality follows from the fact that no edges with colour $\alpha$ or $\beta$ were modified in the shift from $c_f$ to $c'$.
Then Proposition~\ref{AlterPathAdmiss} together with Claim~\ref{Split} imply that $W_c(x,e\leadsto f)$ is $c$-proper-shiftable.

By Claim~\ref{Split} we have that proving that $W_c(x,e\leadsto f)$ is $c$-augmenting is the same as proving that ${g_m}^\frown P_c(x,e\leadsto f)={g_m}^\frown P_{c'}(u_m,\alpha/\beta)$ is $c'$-augmenting.
To show this we use Proposition~\ref{AlterPathAdmiss} once again, for which we need to check
that $y\in f$ is not the last vertex of ${g_m}^\frown P_c(x,e\leadsto f)$.
This is easy since $P_c(x,e\leadsto f)=P_c(u_m,\alpha/\beta)$ and we know that $y$ is not the last vertex of any $\alpha/\beta$-path (with respect to $c$) that goes through $y$ since $f$ is suitable.
\end{proof}

\medskip\noindent{\bf Type II.}
Suppose that a suitable $f\in P_c(x,e)$ is of Type II.
Let $\delta$ be the smallest colour in $m_c(y)$. The reason why we cannot extend $F_c(x,e\leadsto f)$ is the same as in Proposition~\ref{TwoIndices}, namely that there is a colour $\epsilon$ and an index $i<m=l(F_c(x,e\leadsto f))-1$ such that $\epsilon$ is the minimal colour available in both $m_c(u_i)$ and $m_c(u_m)$.
It is clear that $\delta\not=\epsilon$ because $f$ is not of Type $0$ and $\{\alpha,\beta\}\cap \{\delta,\epsilon\}=\emptyset$ because $f$ is not of Type I.

Consider now the alternating $\delta/\epsilon$-paths
$P_c(u_i,\delta/\epsilon)$ and
$P_c(u_m,\delta/\epsilon)$.
Our aim is to choose one of them, call it $Q$, and then define
$$W_c(x,e\leadsto f):={V_c(x,e)_{i(f)-1}}^\frown {F_c(x,e\leadsto f)_{j+1}}^\frown Q,$$
where $j\in \{i,m\}$, depending on the choice of $Q$, is such that $W_c(x,e\leadsto f)$ is $c$-augmenting.
As in the case of Type I, we need to rule out some edges.

\begin{definition}
We say that a suitable $f\in V_c(x,e)$ of Type II is \emph{superb of Type II} if, in the above notation, both of the following equalities hold:
\begin{itemize}
    \item $P_c(u_i,\delta/\epsilon)=P_{c_f}(u_i,\delta/\epsilon)$,
    \item $P_c(u_m,\delta/\epsilon)=P_{c_f}(u_m,\delta/\epsilon)$.
\end{itemize}
%where $m$, $i$, $\delta$, $\epsilon$ and $u_k$'s are as above.
\end{definition}

Let $f\in V_c(x,e)$ be superb of Type II.
Then we put
\begin{equation}\label{eq:IVC3}
 W_c(x,e\leadsto f):={V_c(x,e)_{i(f)-1}}^\frown {F_c(x,e\leadsto f)_{j+1}}^\frown P_c(u_{j},\delta/\epsilon),
 \end{equation}
where $j\in \{i,m\}$ is the index that satisfies the analogue of Proposition~\ref{VizingPath} with respect to $c_f$ and $F_c(x,e\leadsto f)$, i.e., there is no $h\in P_c(u_j,\delta/\epsilon)$ such that $y\in h$.
If both indices $i$ and $m$ satisfy this, then we put $j:=i$.
We call this index $j$ the {\it second critical index} and write 
$$
 P_c(x,e\leadsto f):=P_c(u_{j},\delta/\epsilon).$$
See Figure~\ref{fi:IVC} for an illustration.

\begin{proposition}
Let $f\in V_c(x,e)$ be superb of Type II.
Then $W_c(x,e\leadsto f)$ is $c$-augmenting.
\end{proposition}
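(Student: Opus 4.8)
The plan is to follow the proof of Proposition~\ref{TypeI} step by step, with the two adjustments forced by the Type~II situation: the conditional fan is truncated at the second critical index $j\in\{i,m\}$ before the alternating $\delta/\epsilon$-path is attached, and the ``colliding'' colour $\epsilon$ — unlike the colours $\alpha,\beta$ of a Type~I path — may itself occur inside $F_c(x,e\leadsto f)$, in fact $\epsilon=c(g_{i+1})$, since (as $f$ is of Type~II, not Type~$0$) $\epsilon\notin m_c(y)$ while $g_{i+1}$ is the unique $\epsilon$-coloured edge at $y$. As in Proposition~\ref{VizingPath} I would first check that $j$ is well defined: the paths $P_c(u_i,\delta/\epsilon)$ and $P_c(u_m,\delta/\epsilon)$ cannot both meet $y$, because $\delta\in m_c(y)$ forces such a path to end at $y$ through $g_{i+1}$, so both would coincide up to reversal, giving $u_i=u_m$ — impossible, since $\epsilon$ is available at $u_m$ at step $m$ but was already used there at step $i$. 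Hence $P_c(u_j,\delta/\epsilon)$ uses no edge containing $y$; I also record $\{\delta,\epsilon\}\cap\{\alpha,\beta\}=\emptyset$ and that no edge at $y$ is $\delta$-coloured.

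I would then set $P:={V_c(x,e)_{i(f)-1}}^\frown F_c(x,e\leadsto f)_{j+1}$ and let $c'$ be the $P$-shift of $c$; by Proposition~\ref{ConditionalFanAdmiss} and Claim~\ref{Split}, $P$ is edge injective and $c$-proper-shiftable, $g_j$ is its unique $c'$-uncoloured edge (Claim~\ref{LastNotcoloured}), and $c'$ is also the shift of $c_f$ along $(g_0,\dots,g_j)$, so $c_f$ and $c'$ agree off those edges, each of which contains $y$. Edge injectivity of $W_c(x,e\leadsto f)$ then reduces, as in Proposition~\ref{TypeI}, to excluding an edge $h$ lying on both $P_c(u_j,\delta/\epsilon)$ and $P$: it is not a fan edge $g_{i'}$ (those contain $y$), and if $h\in V_c(x,e)_{i(f)-1}$ then $c(h)\in\{\delta,\epsilon\}$ and, by superbness, $c_f(h)=c(h)$, contradicting Claim~\ref{LastNotcoloured} applied to $V_c(x,e)_{i(f)}$, of which $h$ is an interior edge.

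The crux, which I expect to cost the most, is to show $P_{c'}(u_j,\delta/\epsilon)=P_c(u_j,\delta/\epsilon)$. Superbness gives $P_c(u_j,\delta/\epsilon)=P_{c_f}(u_j,\delta/\epsilon)$, so only the passage from $c_f$ to $c'$ matters. As this path avoids $y$ and $c_f,c'$ differ only on $y$-edges, Claim~\ref{Basic}\ref{it:Biii} gives $P_{c_f}(u_j,\delta/\epsilon)\sqsubseteq P_{c'}(u_j,\delta/\epsilon)$; I then need the $c'$-path not to be \emph{longer}. Its last vertex $w\neq y$ could acquire a new edge in its missing colour under $c'$ only if that edge were a recoloured $g_{i'}=\{y,w\}$; but $g_{i'}$ receives colour $c(g_{i'+1})$ (and $g_0$ receives $c(g_1)$), which is never $\delta$ and is $\epsilon$ only for $g_{i+1}$. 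So the danger is confined to $w=u_i$, which forces $j=m$; but then $j=m$ was chosen because $P_c(u_i,\delta/\epsilon)$ meets $y$, so $u_i$ has a $\delta$-edge, and since $\epsilon\in m_c(u_i)$ the path $P_c(u_m,\delta/\epsilon)$ could enter $u_i$ only along that same $\delta$-edge, forcing it to share a sub-path with $P_c(u_i,\delta/\epsilon)$ and hence to have the same vertex set — contradicting $u_i\neq u_m$ and that one of the two paths ends at $y$. So no re-routing happens.

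To finish I would argue exactly as in Proposition~\ref{TypeI}: $g_j=\{y,u_j\}$ is $c'$-uncoloured, $\delta\in m_{c'}(y)$, $\epsilon\in m_{c'}(u_j)$ (the only change at $u_j$ from $c_f$ to $c'$ is on $g_j$, which is then uncoloured or of colour $\neq\epsilon$), and by the previous step $y$ is not a vertex of $P_{c'}(u_j,\delta/\epsilon)$; so Proposition~\ref{AlterPathAdmiss} applied to $c'$ shows ${g_j}^\frown P_{c'}(u_j,\delta/\epsilon)$ is $c'$-augmenting. Then Claim~\ref{Split} identifies the $W_c(x,e\leadsto f)$-shift of $c$ with the $\big({g_j}^\frown P_c(x,e\leadsto f)\big)$-shift of $c'$, which — together with edge injectivity and the $c$-proper-shiftability of $P$ — gives that $W_c(x,e\leadsto f)$ is $c$-augmenting.
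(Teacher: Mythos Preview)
Your proof is correct and follows essentially the same route as the paper's: establish edge injectivity of $W_c(x,e\leadsto f)$, show that the alternating $\delta/\epsilon$-path is unchanged when passing from $c$ (equivalently $c_f$, by superbness) to the shift $c'$, and then conclude via Proposition~\ref{AlterPathAdmiss} and Claim~\ref{Split}. The only notable difference is in the step $P_{c_f}(u_j,\delta/\epsilon)=P_{c'}(u_j,\delta/\epsilon)$: the paper dispatches this in one line by invoking the analogue of the argument behind~\eqref{eq:Pc=Pc'} in Proposition~\ref{VizingPath} (with $y$, $c_f$, $\delta$, $\epsilon$ playing the roles of $x$, $c$, $\alpha$, $\beta$), whereas you unwind that argument explicitly via Claim~\ref{Basic}\ref{it:Biii} and a case analysis on which $g_{i'}$ could acquire colour $\epsilon$. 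Your edge-injectivity argument also differs slightly: you apply Claim~\ref{LastNotcoloured} to $V_c(x,e)_{i(f)}$ (comparing $c$ with $c_f$), while the paper applies it to $P$ (comparing $c$ with $c'$); both work.

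Two small points of phrasing. First, in the ``not longer'' step, the contradiction at the end is cleaner stated as: if $w=u_i$ then $P_c(u_m,\delta/\epsilon)$ is the reverse of $P_c(u_i,\delta/\epsilon)$, so the latter avoids $y$ --- contradicting the very reason $j=m$ was chosen. Your ``contradicting $u_i\neq u_m$'' is not quite the right clause to cite. Second, ``the only change at $u_j$ from $c_f$ to $c'$ is on $g_j$'' is not literally true if some earlier $u_{i'}$ coincides with $u_j$, but the conclusion $\epsilon\in m_{c'}(u_j)$ still holds since the only $g_{i'}$ that could receive colour $\epsilon$ is $g_i$, and $u_i\neq u_j$.
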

\begin{proof}
First we show that $W_c(x,e\leadsto f)$ is edge injective.
By Proposition~\ref{ConditionalFanAdmiss} we have that
$$P:={V_c(x,e)_{i(f)-1}}^\frown {F_c(x,e\leadsto f)}_{j+1}$$
is edge injective and moreover $c'$, the $P$-shift of $c$, is also the $F_c(x,e\leadsto f)_{j+1}$-shift of $c_f$.
This implies that $W_c(x,e\leadsto f)$ is not edge injective if and only if there is $h\in P_c(x,e\leadsto f)\cap P$.
Suppose that such an $h$ exists.
We know that $c(h)=c_f(h)\in \{\epsilon,\delta\}$.
This implies that $y\in h$ or $x\in h$, i.e., it is not the case that $h\in P_c(x,e)$.
We know that $y\not \in h$ by the definition of the second critical index and $W_c(x,e\leadsto f)$.
But then we have $c_f(h)=c'(h)$ because $f$ is suitable, i.e., far away from $x$.
However since $h\in P$ we must have $c'(h)\not =c(h)$.
This is a contradiction because by the assumption we have $c(h)=c_f(h)=c'(h)$.

A similar argument shows that $P_c(u_j,\delta/\epsilon)=P_{c_f}(u_j,\delta/\epsilon)=P_{c'}(u_j,\delta/\epsilon)$.
Here the first equality is the assumption and the second follows from the choice of the second critical index (namely from~\eqref{eq:Pc=Pc'}). 
Then Proposition~\ref{AlterPathAdmiss} together with Claim~\ref{Split} imply that $W_c(x,e\leadsto f)$ is $c$-proper-shiftable.

By the Claim~\ref{Split} we have that proving that $W_c(x,e\leadsto f)$ is $c$-augmenting is the same as proving that ${g_j}^\frown P_c(x,e\leadsto f)={g_j}^\frown P_{c'}(u_j,\delta/\epsilon)$ is $c'$-augmenting.
This follows easily from Proposition~\ref{AlterPathAdmiss} because from the definition of the second critical index we know that for every $h\in P_c(x,e\leadsto f)$ we have $y\not\in h$.
This finishes the proof.
\end{proof}

\subsection{How many superb edges are there?}\label{SuperbEdges}

We conclude the definition of Vizing's and iterated Vizing's chain by an estimate on superb edges.
First we artificially extend the notion of being superb to Type $0$ in order to unify the presentation.

\begin{definition}
We call a suitable edge $f\in P_c(x,e)$ \emph{superb} if it is of Type $0$ (in which case we set
$P_c(x,e\leadsto f):=\emptyset$) or superb of Type I or superb of Type II.
\end{definition}

The main results of the previous subsection then directly imply the following proposition.

\begin{proposition}
Let $f\in P_c(x,e)$ be superb.
Then $W_c(x,e\leadsto f)$ is $c$-augmenting.\qed
\end{proposition}

%But how many edges we exclude with this notion?
The following result, where we do not try to optimise the constants, gives that there are many superb edges. 

\begin{proposition}\label{SuperbEdgesSamecolour}
For every $L\le l(P_c(x,e))$ there are colours $\gamma,\theta\in [\Delta+\pi]$ such that
$$|H_{\gamma,\theta}|\ge \frac{1}{3(\Delta+\pi)^2}\left(\frac{L}{2}-\Delta^5-1\right)-2\Delta^3,$$
 where  $H_{\gamma,\theta}=H_{\theta,\gamma}$ consists of those superb edges $f\in P_c(x,e)_L$ such that the alternating path $P_c(x,e\leadsto f)$ is coloured by $\{\gamma,\theta\}$ (or its subset).
\end{proposition}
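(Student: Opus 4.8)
The plan is to show that along the prefix $P_c(x,e)_L$ of the alternating path, a positive fraction of the colour-$\alpha$ edges are superb, and then among these superb edges a positive fraction share the same colour pair $\{\gamma,\theta\}$ for $P_c(x,e\leadsto f)$, via two successive averaging (pigeonhole) arguments. First I would count the colour-$\alpha$ edges among the first $L$ edges of $P_c(x,e)$: since colours alternate $\alpha,\beta,\alpha,\dots$, there are at least $\lfloor L/2\rfloor \ge L/2 - 1$ such edges. Then I would subtract the colour-$\alpha$ edges that fail to be suitable. By Definition~\ref{de:suitable}, only Property~\ref{it:suitable1} (being within graph distance $3$ of $e$) and Property~\ref{it:suitable2} (being the last edge) can fail for a colour-$\alpha$ edge, and as noted after that definition only roughly $2\Delta^4$ (certainly at most $\Delta^5$, with room to spare) edges lie within distance $3$ of $e$; adding the at most one ``last edge'' gives that all but at most $\Delta^5+1$ of the colour-$\alpha$ edges in $P_c(x,e)_L$ are suitable. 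Hence there are at least $L/2-\Delta^5-1$ suitable edges on $P_c(x,e)_L$.

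Next I would bound how many suitable edges fail to be \emph{superb}. A suitable edge of Type $0$ is automatically superb, so the only losses come from Type I and Type II edges that are not superb, i.e.\ where one of the alternating paths $P_c(u_m,\alpha/\beta)$ (Type I) or $P_c(u_i,\delta/\epsilon)$, $P_c(u_m,\delta/\epsilon)$ (Type II) changes when we pass from $c$ to $c_f$. The key point here — this is the crux of the counting — is that the set of edges on which $c$ and $c_f$ differ is contained in $N(x)$ together with the edges of $P_c(x,e)_{i(f)}$, i.e.\ the part of the alternating path strictly before $f$. So a suitable edge $f$ fails to be superb only if one of the (at most two) relevant alternating paths rooted near $f$ reaches back into this ``already used'' portion of $P_c(x,e)$ before $f$. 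I would argue that each edge $g$ lying before $f$ on $P_c(x,e)$ (or in $N(x)$) can be ``responsible'' for spoiling only boundedly many later suitable edges $f$: the alternating path from $u_m$ or $u_i$ that hits $g$ determines, walking backwards from $g$, a bounded-length initial segment that pins down $y^f$ (within distance controlled by $\Delta$ and the fan size $\le \Delta^2$), and hence $f$, up to a constant number of possibilities — giving at most a constant $C=C(\Delta)$ bad suitable edges per offending edge $g$. Since the offending edges for a given $f$ all lie on $P_c(x,e)_{i(f)}\subseteq P_c(x,e)_L$ or in the bounded set $N(x)$, a double-counting over the at most $L$ edges of $P_c(x,e)_L$ plus $\Delta$ edges of $N(x)$ shows the number of non-superb suitable edges is at most, say, $2\Delta^3$ (again not optimising). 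Subtracting, the number of superb edges in $P_c(x,e)_L$ is at least $L/2-\Delta^5-1-2\Delta^3$.

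Finally I would run the colour pigeonhole. For each superb edge $f$, the path $P_c(x,e\leadsto f)$ is an alternating path and hence is coloured by some unordered pair $\{\gamma,\theta\}\subseteq[\Delta+\pi]$ — for Type $0$ it is empty, and any pair works, so we may assign it arbitrarily; for Type I it is $P_c(u_m,\alpha/\beta)$ with pair $\{\alpha,\beta\}$; for Type II it is $P_c(u_j,\delta/\epsilon)$ with pair $\{\delta,\epsilon\}$. There are at most $\binom{\Delta+\pi}{2}\le (\Delta+\pi)^2/2$, hence fewer than $3(\Delta+\pi)^2$ such pairs (the slack of $3$ absorbing a further division I describe in a moment — actually $\binom{\Delta+\pi}{2}<(\Delta+\pi)^2$ suffices, but the stated constant leaves room; one should double-check whether an extra factor is genuinely needed), so by averaging some pair $\{\gamma,\theta\}$ is used by at least a $1/(3(\Delta+\pi)^2)$ fraction of the superb edges, giving the claimed bound once we also discard the at most $2\Delta^3$ additive term as in the statement. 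I expect the \textbf{main obstacle} to be making rigorous and quantitative the assertion that each offending edge $g$ on the already-used part of the path spoils only boundedly many suitable $f$'s; this requires carefully tracking, for a suitable $f$, the location of $y^f$ and the structure of the conditional fan $F_c(x,e\leadsto f)$ relative to $f$, using that fans have length at most $\Delta^2$ and that an alternating path, being locally determined, cannot ``branch''. Everything else is routine counting with deliberately loose constants.
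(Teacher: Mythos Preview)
Your overall plan (count suitable edges, subtract the non-superb ones, then pigeonhole on the colour pair) is reasonable, but the middle step has a genuine gap that makes the argument, as written, fail.

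You claim that each ``offending'' edge $g$ (one where $c$ and $c_f$ differ) can spoil only $C(\Delta)$ many suitable $f$'s, and then you sum over the $L+\Delta$ possible $g$'s to conclude there are at most $2\Delta^3$ non-superb suitable edges. But that sum gives $C(\Delta)\cdot(L+\Delta)$, which is of order $L$, not a constant --- and subtracting an $O(L)$ term from $L/2-\Delta^5-1$ leaves nothing. The double-counting simply does not yield the constant bound you assert.

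The paper avoids this by splitting into Types \emph{first}: by pigeonhole at least one third of the suitable edges are of a single Type (this is where the factor $3$ in the denominator comes from, not from the colour-pair count as you speculate). Within each Type the non-superb edges are then bounded by a genuinely constant number, but by \emph{different} mechanisms. For Type~II your intuition is essentially correct: since $\{\delta,\epsilon\}\cap\{\alpha,\beta\}=\emptyset$, the only edges where $c$ and $c_f$ differ \emph{and} carry colour $\delta$ or $\epsilon$ lie in $N(x)$, a set of bounded size; so the ``offending $g$'' pool really is constant and your per-$g$ bound finishes. For Type~I, however, the relevant path $P_c(u_m^f,\alpha/\beta)$ uses the \emph{same} colours $\alpha,\beta$ as $P_c(x,e)$, so every edge of $P_c(x,e)_{i(f)}$ is a potential offending $g$. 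What rescues the count is a structural fact you do not prove: the paper shows (Claim~\ref{cl:FSuitNotSuperb}) that if a Type~I edge $f$ is not superb then $u_m^f$ must be the last vertex of $P_c(x,e)$ or of $P_c(x,\beta/\alpha)$ --- only two specific vertices, determined by $e$ and $x$ alone and independent of $f$ --- whence $f$ lies in a set of size at most $2\Delta^2$. Establishing this requires a careful case analysis of how $P_c(u_m^f,\alpha/\beta)$ can meet the modified region, and is the real content of the proposition.

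Your idea that ``walking backwards from $g$ along the alternating path pins down $u_m^f$ at an endpoint'' is close to the truth, but the point is that for Type~I all the offending $g$'s lie on the \emph{same} maximal $\alpha/\beta$-path, so they all point to the same two endpoints; this is a global constraint on $u_m^f$, not a per-$g$ one, and it is what you need to articulate to close the gap.
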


%If $f$ is of Type 0, then we agree that $P_c(x,e\leadsto f)$ is empty and thus $f$ is in every $H_{\gamma,\theta}$

\begin{proof}
There are at most $(\frac{L}{2}+\Delta^5+1)$ edges that are not suitable.
In other words, there are at least $(\frac{L}{2}-\Delta^5-1)$ suitable edges.
It follows that at least $\frac{1}{3}$ of those edges is either of Type $0$ or of Type I or of Type II.
We examine each situation separately.

{\bf Type $\bf{0}$.}
In this case every such edge is superb, the choice of colours is irrelevant and for any $\gamma,\theta$ we have 
$$|H_{\gamma,\theta}|\ge \frac{1}{3}\left(\frac{L}{2}-\Delta^5-1\right),$$
under the assumption that at least $\frac{1}{3}$ of the edges are of Type $0$.

{\bf Type I.} Let us show that we can take $\{\alpha,\beta\}$ for $\{\gamma,\theta\}$ where, as usual, we denote the colours on $P_c(x,e)$ by $\alpha$ and $\beta$.

\begin{claim}\label{cl:FSuitNotSuperb} If a suitable edge $f\in V_c(x,e)$ is not superb then it is at distance 1 from the last vertex of $P_c(x,e)$ or $P_c(x,\beta/\alpha)$.\end{claim}

\begin{proof}[Proof of Claim] Given $f$, we use the same notation as above. In particular, $F_x(x,e\leadsto f)=(g_0,\dots,g_m)$ with $g_t=\{y,u_t\}$. We will show that, in fact, 
$u_m$ is the last vertex of $P_c(x,e)$ or $P_c(x,\beta/\alpha)$. 

By the definition we know that if a suitable $f\in V_c(x,e)$ is not superb then $P_c(u_m,\alpha/\beta)\not=P_{c_f}(u_m,\alpha/\beta)$. Let $Q$ be the maximal common prefix of these two paths.
%, that is, we take the maximal $Q\sqsubseteq P_c(u_m,\alpha/\beta),P_{c_f}(u_m,\alpha/\beta)$.
We know that we must have $u_m\not=u_0$, $\alpha\not\in m_c(u_m)=m_{c_f}(u_m)$ and $\beta\in m_c(u_m)=m_{c_f}(u_m)$. 
This implies that $P_c(u_m,\alpha/\beta)$ and $P_{c_f}(u_m,\alpha/\beta)$ start with the same edge (and colour $\alpha$) and thus $l(Q)\ge 1$ and $c\upharpoonright Q=c_f\upharpoonright Q$.
Write $w$ for the last vertex of $Q$.
There are two cases that we treat separately.

First suppose that $l(P_c(u_m,\alpha/\beta))>l(Q)$.
Denote the first new edge in $P_c(u_m,\alpha/\beta)$ as~$h$.
We must have $c(h)\in \{\alpha,\beta\}$ and $c_f(h)\not\in \{\alpha,\beta\}$ because $c_f$ is a proper partial colouring and $l(Q)\ge 1$.
Consider now the shift from $c$ to $c_f$.
The only possible edges that were coloured with $c$ by $\alpha$ or $\beta$ and changed the colour to something different from $\alpha$ and $\beta$ were $f$ and $e_{j+1}$ where the index $j$ is from Proposition~\ref{TwoIndices}.
Thus $h$ is $f$ or $e_{j+1}$. 

First we handle the case $h=f$. Recall that $f=\{y,u_0\}$ and $w$ is the last vertex of~$Q$. It is impossible that $w=u_0$ because the only edge at $N(u_0)$ with $c_f$-colour in $\{\alpha,\beta\}$ is $g:=P_c(x,e)_{i(f)-2}$, the edge before $f$ on $P_c(x,e)$; however, $c_f(g)=\alpha$ is different from $c(g)=\beta$, contradicting $g\in Q$ and $c\upharpoonright Q=c_f\upharpoonright Q$. Thus the last vertex of $Q$ is $w=y$. Since $\alpha$ is missing at
$y$ under $c_f$, we have that in fact $Q=P_{c_f}(u_m,\alpha/\beta)$. It follows that $P_{c_f}(u_m,\alpha/\beta)$ is the reversal of $P_{c_f}(y,\beta/\alpha)$ and consequently $u_m$ is the last vertex of $P_c(x,e)$, as desired.
 
Next, let us derive a contradiction by assuming that $h=e_{j+1}$. Since $\alpha\in m_c(x)$, it holds that $x$ (resp.\ $e_{j+1}$) is the last vertex (resp.\ edge) of $P_c(u_m,\alpha/\beta)$.
The first critical index of $F_c(x,e)$ cannot be $j$
%must be $k=l(F_c(x,e))-1$ 
(otherwise 
%no colour $\beta$ at $x$ would be shifted and 
$c_f(e_{j+1})=c(e_{j+1})=\beta$, contradicting the maximality of~$Q$).
%Thus $j$ is not the first critical index.
This means that $P_c(v_j,\alpha/\beta)$ contains $x$.
This can only happen when $x$ (resp.\ $e_{j+1}$) is the last vertex (resp.\ edge) of $P_c(v_j,\alpha/\beta)$.
The same holds for $P_c(u_m,\alpha/\beta)$, which implies that $u_m=v_{j}$.
But this is impossible since $f$ is suitable, i.e, far away from $e$.

The second case is when $l(P_{c_f}(u_m,\alpha/\beta))>l(Q)=l(P_c(u_m,\alpha/\beta))$.
Denote the first new edge in $P_{c_f}(u_m,\alpha/\beta)$ as $h$.
We have $c_f(h)\in \{\alpha,\beta\}$ but $c(h)\not \in \{\alpha,\beta\}$.
Consider again the shift from $c$ to $c_f$.
The only possible edges that were not coloured with $c$ by $\alpha$ or $\beta$ but  got such a colour after the shift are $e_j$ and $e_k$, where we again use the notation from Proposition~\ref{TwoIndices}.
Therefore $w$, the last vertex of $Q$, is equal to one of $v_j,x,v_k$.
First let us derive a contradiction by assuming that $w=v_t$ with
$t\in\{j,k\}$. By the assumption that $Q=P_c(u_m,\alpha/\beta)$ we have that $P_c(v_t,\alpha/\beta)$ is just reversed $P_c(u_m,\alpha/\beta)$.
If $t=j$, then this would imply that $j$ is the first critical index because $P_c(v_t,\alpha/\beta)$ being an inverse path to $P_c(u_m,\alpha/\beta)$ does not contain $x$ (because if $x$ were contained, it would be the last vertex but $x\not=u_m$ since $f$ is suitable).
Write $g$ for the first edge in $P_c(v_j,\alpha/\beta)$, i.e., $c(g)=\alpha$ and $v_j\in g$.
We also have that $g$ is the last edge of $P_c(u_m,\alpha/\beta)=Q$.
Because $f$ is suitable, the length of $P_c(v_j,\alpha/\beta)$ is at least 2 and we have that $c_f(g)=\beta$. Since $Q\sqsubseteq P_{c_f}(u_m,\alpha/\beta)$ and they both start with colour $\alpha$ we must have $\alpha=c(g)=c_f(g)=\beta$, which is a contradiction.
Suppose now that $t=k$.
Then $k$ must be the first critical index because otherwise $P_{c_f}(u_m,\alpha/\beta)$ has $v_k$ as the last vertex.
(This follows from Proposition~\ref{TwoIndices} since $v_j\not=v_k$ and if $j$ is the first critical index, then $e_k$ does not change the colour when we shift $c$ to $c_f$.)
%It follows that $k$ is the first critical index.
But then the same argument with $g$, the first edge of $P_c(v_k,\alpha/\beta)$, as above shows that this is impossible.
Thus $w=x$, that is, $x$ is the last vertex of $Q=P_c(u_m,\alpha/\beta)$.
We have that $P_c(x,\beta/\alpha)$ is just the reversed $P_c(u_m,\alpha/\beta)$ and $u_m$ is the last vertex of $P_c(x,\beta/\alpha)$, finishing the proof of the claim.\end{proof}

%We showed that if a suitable edge $f$ of Type I is not superb then it is at distance one from the last vertex of $P_c(x,e)$ or $P_c(x,\beta/\alpha)$.
By Claim~\ref{cl:FSuitNotSuperb} there are trivially at most $2\Delta^2$ choices of $f\in V_c(e,x)$ which is  suitable but not superb of Type~I.
%In other words there are only two bad vertices that can play the role of $u_m$ in such a situation.
%One can easily compute that there are at most $2\Delta$ choices for $y_f$ and therefore at most $2\Delta^2$ choices for such an $f$.
We conclude that 
$$|H_{\alpha,\beta}|\ge \frac{1}{3}\left(\frac{L}{2}-\Delta^5-1\right)-2\Delta^2,$$
under the assumption that at least $\frac{1}{3}$ of suitable vertices are of Type I.

{\bf Type II.}
There are at most $(\Delta+\pi)^2$ choices for the colours $\gamma,\theta$.
Counting argument shows that there is a choice of colours $\gamma,\theta$ such that the size of the set of those suitable edges of Type II that uses them is at least $\frac{1}{3(\Delta+\pi)^2}(\frac{L}{2}-\Delta^5-1)$.

Since $\{\alpha,\beta\}\cap \{\gamma,\theta\}=\emptyset$, the only reason why $f$ is suitable and not superb is that, up to swapping $\gamma$ and $\theta$, one of $P_{c}(u_i,\gamma/\theta)$ or $P_c(u_m,\gamma/\theta)$ visits some neighbour of $x$.
For each such neighbour $w$ we consider a maximal $\gamma/\theta$-path $Q_w$ that goes through $w$. This path is unique up to its direction.
There are two directions that we can follow along $Q_w$ and therefore there are two possible endpoints $r,s$ on $Q_w$ that can serve as $u_i^f$ or $u_m^f$.
This gives that there are  at most $2\Delta$ such vertices.
Clearly, there are at most $\Delta^2$ edges for each of these vertices that can play the role of $f$ that is suitable of Type II but not superb of Type II.
This gives that 
$$|H_{\gamma,\theta}|\ge \frac{1}{3(\Delta+\pi)^2}\left(\frac{L}{2}-\Delta^5-1\right)-2\Delta^3,$$
under the assumption that at least $\frac{1}{3}$ of suitable edges is of Type II.
\end{proof}

\subsection{Double counting}\label{Estimates}

We use double counting argument in two settings, one for Vizing's chains and one for iterated Vizing's chains.

\subsubsection{Vizing's chains}
Let $c;E\to[\Delta+\pi]$ be a proper partial colouring.

\begin{definition}\label{SimpleImprovement}
We say that $c$ \emph{cannot be improved in $L$ steps} if, for every $e\in U_c$ and $x\in e$,
the fan $F_c(x,e)$ is not $c$-augmenting and 
$$|P_c(x,e)|\ge L.$$

\end{definition}

We will use a double counting argument inside the following bipartite graph (where it is convenient to view edges as ordered pairs).

\begin{definition}
We write $\mathcal{H}_c$ for the bipartite graph with parts $U_c$ and $\dom(c)$ where
$$(e,f)\in E(\mathcal{H}_c) \ \Leftrightarrow \ (\exists x\in e) \ f\in V_c(x,e).$$
\end{definition}

The crucial observation is that the degree of every $f\in \dom(c)$ in $\mathcal{H}_c$ is bounded by a constant (that is, by a function that depends only on $\Delta+\pi$). Here, as in the rest of this section, we do not optimise constants as their values are irrelevant in our applications of the stated bounds.

\begin{proposition}\label{EstimateSimpleVizing}
Let $f\in \dom(c)$.
Then $\deg_{\mathcal{H}_c}(f)\le (\Delta+\pi)^4$.
\end{proposition}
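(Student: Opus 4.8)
The plan is to bound $\deg_{\mathcal H_c}(f)$ by counting, for a fixed coloured edge $f$, the number of pairs $(e,x)$ with $e\in U_c$, $x\in e$, such that $f$ appears in the Vizing chain $V_c(x,e)={F_c(x,e)_{i+1}}^\frown P_c(x,e)$. There are two ways $f$ can occur: either $f$ is an edge of the fan prefix $F_c(x,e)_{i+1}$, or $f$ lies on the alternating tail $P_c(x,e)=P_c(v_i,\alpha/\beta)$. I would handle these two cases separately and show each contributes at most a constant (something like $2\Delta^3$ and $(\Delta+\pi)\Delta^3$ respectively, comfortably within $(\Delta+\pi)^4$), then add.

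\textbf{Fan case.} Suppose $f\in F_c(x,e)_{i+1}$. Since every edge of a fan around $x$ contains the centre $x$, and the first edge $e_0=e$ is the unique uncoloured edge of the fan, having $f\in\dom(c)$ forces $f\ne e_0$, so $f=e_t$ for some $1\le t\le i$; in particular $x\in f$. Thus $x$ is one of the (at most two) endpoints of $f$, giving at most $2$ choices for $x$. Given $x$, I must bound the number of uncoloured edges $e\ni x$ for which $f$ shows up in $F_c(x,e)$. The key point is that $e$ is an edge at $x$ and the fan is built deterministically from $(c,x,e)$, so $e$ determines the fan; but different $e$'s could all hit $f$. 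However $e$ has an endpoint $x$, and the fan proceeds by following, at each step, the unique edge at $x$ of a prescribed colour, so the fan's edge set through $x$ is constrained: $f=e_t$ means that the colour sequence $\alpha_0,\dots,\alpha_{t-1}$ read off the fan ends so as to point to $f$. I would argue that $e$ together with $x$ lies in $N(x)$, a set of size at most $\Delta$, so there are at most $\Delta$ choices of $e$; combined with $\le 2$ choices of $x$ this is $\le 2\Delta$, which is far below the bound. (If a cleaner argument is wanted, one can just say: $e\in N(x)$ and $x\in f$, so trivially at most $2\Delta$ pairs.)

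\textbf{Alternating-path case.} Suppose $f\in P_c(x,e)=P_c(v_i,\alpha/\beta)$, so $c(f)\in\{\alpha,\beta\}$. Here $\alpha$ is determined as the minimal colour in $m_c(x)$ and $\beta=\alpha_i(c,x,e)$; moreover $v_i$ is an endpoint of $f$ or is reached from an endpoint of $f$ by following the alternating path. The crucial structural fact (Claim~\ref{Basic}\ref{it:Bii}) is that $P_c(v_i,\alpha/\beta)$ is a path, so $f$ together with a choice of one of its endpoints and a direction determines the maximal $\alpha/\beta$-path through $f$ uniquely, hence determines $v_i$ up to at most $2$ possibilities once $\alpha,\beta$ are fixed. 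So I fix the colour pair: $c(f)$ is one of $\alpha,\beta$ (fixed by $f$) and the other colour ranges over at most $\Delta+\pi$ values, giving $\le\Delta+\pi$ choices for $\{\alpha,\beta\}$; for each, $\le 2$ choices for $v_i$ (the two ends of the maximal $\alpha/\beta$-path through $f$, whichever is the ``start''); then $v_i=v_{i^f}$ must be a non-centre endpoint of a fan edge $e_i$ at $x$, and from $v_i$ and the requirement that $\alpha$ be minimal missing at $x$ I recover $x$: indeed $x\in e_i$ and $e_i$ is an edge at $v_i$, so $x$ ranges over $N(v_i)$, at most $\Delta$ choices; finally $e\in N(x)$, at most $\Delta$ choices. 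Multiplying, this case gives at most $(\Delta+\pi)\cdot 2\cdot\Delta\cdot\Delta\le 2(\Delta+\pi)\Delta^2$ pairs.

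\textbf{Conclusion and the obstacle.} Adding the two cases, $\deg_{\mathcal H_c}(f)\le 2\Delta+2(\Delta+\pi)\Delta^2\le(\Delta+\pi)^4$ (crudely, using $\Delta,\pi\ge 1$). The main obstacle — and the step to write carefully — is the second case: one must be precise that the triple of data ``colour pair $\{\alpha,\beta\}$, the vertex $v_i$, the centre $x$'' is recovered from $f$ with only constant multiplicity, using that $P_c(v_i,\alpha/\beta)$ is genuinely a path (no revisited vertices) so that $f\in P_c(v_i,\alpha/\beta)$ pins down $v_i$ to the endpoint of the unique maximal $\alpha/\beta$-walk through $f$ in the appropriate direction. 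I would also double-check the edge case where the fan is itself augmenting, so $V_c(x,e)=F_c(x,e)$ and only the fan case applies — this only makes the count smaller. Everything else is bookkeeping with the definitions of $F_c$, the critical index, and $P_c(x,e)$ from \eqref{eq:PcxeDef}.
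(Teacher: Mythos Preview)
Your proposal is correct and follows essentially the same argument as the paper: both split into the fan case (where $x\in f$ forces at most $2\Delta$ choices of $(x,e)$) and the alternating-path case (where one fixes the second colour, identifies $v_i$ as an endpoint of the maximal $\alpha/\beta$-path through $f$, then recovers $x$ as a neighbour of $v_i$ and $e\in N(x)$), yielding the same bound $2\Delta+2(\Delta+\pi)\Delta^2\le(\Delta+\pi)^4$ up to inessential constants. The paper's writeup is terser but the decomposition and the counting are identical.
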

\begin{proof}
Let $f\in \dom(c)$.
If $f\in V_c(x,e)$ for some $e\in U_c$ and $x\in e$, then either $f\in F_c(x,e)$ or $f\in P_c(x,e)$.

Suppose first that ${f\in F_c(x,e)}$.
We must have $x\in f$.
It is immediate that there are at most $2\Delta-1$ such choices of $e\in U_c$.

It remains to consider ${f\in P_c(x,e)}$.
In this case there is a colour $\delta\not=c(f)$ such that $\{\delta,c(f)\}$ are the colours in the alternating path $P_c(x,e)$.
By the definition $x$ must be a neighbour of one of the endpoints of $P_c(x,e)$ while $e$ is an edge containing~$x$.
This implies that there are at most $2(\Delta+\pi-1)\Delta^2$ choices of $e\in U_c$.

Putting all together we have
$$\deg_{\mathcal{H}_c}(f)\le 2\Delta-1+2(\Delta+\pi-1)\Delta^2\le (\Delta+\pi)^4$$
and that finishes the proof.
\end{proof}

On the other hand, if a colouring $c$ cannot be improved in a small number of steps, then every vertex in the other part $U_c$ of the graph $\mathcal{H}_c$ has large degree:

\begin{proposition}\label{EstimateSimpleVizingDown}
Suppose that $c$ cannot be improved in $L$ steps.
Then $\deg_{\mathcal{H}_c}(e)\ge L$ for every $e\in U_c$.
\end{proposition}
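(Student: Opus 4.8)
The plan is to unwind the definition of $\mathcal{H}_c$ and show that each uncoloured edge $e\in U_c$ contributes, via a single vertex $x\in e$, a Vizing chain $V_c(x,e)$ whose length is at least $L$. First I would fix $e\in U_c$ and pick either vertex $x\in e$. Since $c$ cannot be improved in $L$ steps, Definition~\ref{SimpleImprovement} tells us two things: the fan $F_c(x,e)$ is \emph{not} $c$-augmenting, so we are in the second case of the construction of the Vizing chain, and $|P_c(x,e)|\ge L$, where $P_c(x,e)$ is the alternating $\alpha/\beta$-path from~\eqref{eq:PcxeDef}.

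Next I would recall that in this case the Vizing chain is defined by $V_c(x,e)={F_c(x,e)_{i+1}}^\frown P_c(x,e)$, where $i$ is the first critical index and $F_c(x,e)_{i+1}$ is a nonempty prefix of the fan (it contains at least the edge $e_0=e$). Hence $l(V_c(x,e))=l(F_c(x,e)_{i+1})+l(P_c(x,e))\ge l(P_c(x,e))\ge L$. Every edge $f$ appearing in $V_c(x,e)$ satisfies $(e,f)\in E(\mathcal{H}_c)$ by definition of $\mathcal{H}_c$, and since the chain $V_c(x,e)$ is edge injective (it is $c$-augmenting, hence $c$-proper-shiftable, hence edge injective by Definition~\ref{cAdmiss}(b)), these are $l(V_c(x,e))\ge L$ distinct neighbours of $e$ in $\mathcal{H}_c$. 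Therefore $\deg_{\mathcal{H}_c}(e)\ge L$.

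There is essentially no obstacle here; the only point requiring a moment's care is that the definition of $\mathcal{H}_c$ counts $f$ as a neighbour of $e$ provided $f\in V_c(x,e)$ for \emph{some} $x\in e$, so producing one good $x$ suffices, and the edge injectivity of the $c$-augmenting chain $V_c(x,e)$ guarantees the $L$ witnessing edges are genuinely distinct. I would phrase the argument in three short sentences along exactly these lines.

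\begin{proof}
Fix $e\in U_c$ and pick any $x\in e$. Since $c$ cannot be improved in $L$ steps, the fan $F_c(x,e)$ is not $c$-augmenting and $|P_c(x,e)|\ge L$, where $P_c(x,e)$ is the alternating path from~\eqref{eq:PcxeDef}. Consequently the Vizing chain is $V_c(x,e)={F_c(x,e)_{i+1}}^\frown P_c(x,e)$ (with $i$ the first critical index), so that $l(V_c(x,e))\ge l(P_c(x,e))\ge L$. By Proposition~\ref{VizingPath} the chain $V_c(x,e)$ is $c$-augmenting, hence $c$-proper-shiftable, hence edge injective; thus it consists of at least $L$ pairwise distinct edges. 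Each such edge $f$ satisfies $(e,f)\in E(\mathcal{H}_c)$ by the definition of $\mathcal{H}_c$, and therefore $\deg_{\mathcal{H}_c}(e)\ge L$.
\end{proof}
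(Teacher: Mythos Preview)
Your approach is the same as the paper's (which dispatches it in one sentence), and the overall logic is right. There is one small slip, though: $\mathcal{H}_c$ is bipartite with parts $U_c$ and $\dom(c)$, so the edge $e$ itself---which is the first edge of $V_c(x,e)$ and lies in $U_c$, not $\dom(c)$---does not count as a neighbour of $e$. Your inequality $l(V_c(x,e))\ge L$ therefore only yields $\deg_{\mathcal{H}_c}(e)\ge L-1$ as written. The fix is immediate: either observe directly that $P_c(x,e)\subseteq V_c(x,e)$ already consists of at least $L$ distinct \emph{coloured} edges (this is how the paper phrases it), or sharpen your count to $l(V_c(x,e))=(i+1)+|P_c(x,e)|\ge 1+L$, noting that $e$ is the unique uncoloured edge of the $c$-shiftable chain $V_c(x,e)$.
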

\begin{proof}
This follows immediately from the assumption since $|P_c(x,e)|\ge L$ for every $e\in U_c$ and $x\in e$.
\end{proof}

The last two results together may be intuitively interpreted for finite graphs as follows: if a partial colouring cannot be improved in $L$ steps, then the ratio of uncoloured to coloured edges within Vizing's chains is at most $(\Delta+\pi)^4/L$ (from which it can be deduced that at most $O(1/L)$-fraction of all edges can be uncoloured).

\subsubsection{Iterated Vizing's chains}

\begin{definition}
We say that $c$ cannot be \emph{iteratively improved in $L$ steps} if it cannot be improved in $L$ steps and
\begin{equation}
\label{eq:IILS}
|P_c(x,e\leadsto f)|\ge L
 \end{equation}
for every $e\in U_c$ and $x\in e$, and for every edge $f\in P_c(x,e)$ that is superb and satisfies $|P_c(x,e)_{i(f)}|\le L$.
\end{definition}

Note that if $L>0$ above, then there are no superb edges of Type $0$ among the first $L$ edges of any $P_c(x,e)$.

\begin{definition}
We write $\mathcal{H}^\leadsto_c$ for the bipartite graph with parts $U_c$ and $\dom(c)$ where
$$(e,g)\in E(\mathcal{H}_c^\leadsto) \ \Leftrightarrow \ (\exists x\in e) \ (\exists f\in P_c(x,e) \ \operatorname{superb}) \ \ g\in W_c(x,e\leadsto f).$$
\end{definition}

Surprisingly, even in $\mathcal{H}_c^\leadsto$, the degree of every $g\in \dom(c)$ can be bounded by a function that depends only on $\Delta+\pi$.

\begin{proposition}\label{EstimateVizing}
Let $g\in \dom(c)$.
Then $\deg_{\mathcal{H}^\leadsto_c}(g)\le (\Delta+\pi)^9$.
\end{proposition}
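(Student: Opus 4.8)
The plan is to bound $\deg_{\mathcal{H}^\leadsto_c}(g)$ by tracing backwards: given $g\in\dom(c)$, I want to recover all triples $(x,e,f)$ with $g\in W_c(x,e\leadsto f)$, and show the number of resulting $e\in U_c$ is bounded by a function of $\Delta+\pi$ alone. Recall that $W_c(x,e\leadsto f)$ is a concatenation of three pieces: the prefix ${V_c(x,e)_{i(f)-1}}$ of the ordinary Vizing chain (which itself splits into a fan around $x$ and an $\alpha/\beta$-path $P_c(x,e)$ up to $f$), the conditional fan $F_c(x,e\leadsto f)$ around $y$, and the final alternating path $P_c(x,e\leadsto f)$. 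So $g$ lies in one of these three parts, and I handle each case separately, in each case showing that the ``data'' determining the chain is pinned down up to constantly many choices.

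First I would dispose of the easy parts. If $g$ lies in the fan around $x$ or in $P_c(x,e)_{i(f)}$, then $g$ already witnesses $(e,g)\in E(\mathcal H_c)$ via the ordinary Vizing chain (since $W_c(x,e\leadsto f)$ agrees with $V_c(x,e)$ on this initial segment up to the shift bookkeeping), so Proposition~\ref{EstimateSimpleVizing} gives at most $(\Delta+\pi)^4$ such $e$. The genuinely new cases are when $g$ lies in $F_c(x,e\leadsto f)$ or in $P_c(x,e\leadsto f)$. In those cases I would argue in two stages: first bound the number of possible ``intermediate'' data $(y,f)$ — that is, the end of the first path together with the suitable edge — as a function of $g$; and then, for each such $(y,f)$, bound the number of $(x,e)$ that could have produced it. For the second stage, note that once we know the suitable edge $f$, the colours $\alpha=c(f)$ and $\beta$ are constrained (by Definition~\ref{de:suitable}\ref{it:suitable3}, $\alpha=c(f)$, and $\beta$ is one of at most $\Delta+\pi$ colours), and $x$ must be a neighbour of an endpoint of the alternating path $P_c(x,e)$ through $f$ — this path is determined by $c$, $f$ and the colour pair $\{\alpha,\beta\}$, up to direction — so there are at most $O((\Delta+\pi)\cdot\Delta\cdot\Delta^2)$ choices of $(x,e)$ for each $f$. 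This is exactly the counting already carried out in the proof of Proposition~\ref{EstimateSimpleVizing}, just applied one level up.

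The main work, then, is the first stage: given $g$, how many pairs $(y,f)$ can have $g\in F_c(x,e\leadsto f)$ or $g\in P_c(x,e\leadsto f)$? Here I distinguish by which part contains $g$. If $g\in F_c(x,e\leadsto f)=(g_0,\dots,g_m)$ with $g_t=\{y,u_t\}$, then $y\in g$, so $y$ is one of the two endpoints of $g$; and $f=g_0$ is the unique edge at $y$ whose colour plays the role of $\alpha$ — but more carefully, once $y$ is fixed there are at most $\Delta$ choices for the edge $f\in N(y)$, and the conditional fan is then determined by $c$ alone (its construction uses only the lists $m_c(u)$, as emphasized after Definition of $F_c(x,e\leadsto f)$, except that $\beta$ is removed from the list at $u_0$; but $\beta$ is determined by the colour of the edge of $P_c(x,e)$ preceding $f$, hence by $f$ and $c$ up to $\Delta+\pi$ choices). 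So there are $O(\Delta(\Delta+\pi))$ pairs $(y,f)$ in this subcase. If instead $g\in P_c(x,e\leadsto f)=P_c(u_j,\delta/\epsilon)$ (Type I or II), then $g$ lies on a maximal alternating path in $c$; such a path through $g$ is determined by $g$, its colour $c(g)$, and the other colour of the path (at most $\Delta+\pi$ choices), and $u_j$ is one of its (at most) two endpoints; then from $u_j$ and the colour data one must recover $y$ — which is a neighbour of $u_j$, at most $\Delta$ choices — and then $f\in N(y)$, another $\Delta$ choices. So again $O((\Delta+\pi)\Delta^2)$ pairs. Multiplying the two stages and summing the three parts gives a bound that is polynomial in $\Delta+\pi$; keeping crude estimates, $(\Delta+\pi)^9$ comfortably absorbs it.

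I expect the main obstacle to be purely bookkeeping: making sure that in the backward-tracing I never implicitly use the shifted colouring $c_f$ (which depends on $x,e$ and so is not available when only $g$ is known) but only the fixed colouring $c$. This is precisely why the paper went to the trouble of defining $F_c(x,e\leadsto f)$ and requiring $f$ to be \emph{superb} — superbness guarantees $P_c(u_m,\alpha/\beta)=P_{c_f}(u_m,\alpha/\beta)$ (and similarly for Type II), and suitability guarantees $c$ and $c_f$ agree near $y$, so every piece of $W_c(x,e\leadsto f)$ beyond the initial Vizing-chain segment is in fact an object defined in terms of $c$ and the local data $(y,f,\alpha,\beta)$ or $(u_j,\delta,\epsilon)$ only. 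Once that is firmly in place, the counting is routine and the constant $(\Delta+\pi)^9$ follows by multiplying the per-level factors ($(\Delta+\pi)^4$ for the initial segment, and roughly $(\Delta+\pi)^2\cdot\Delta^3$ twice for the two further levels), none of which we try to optimise.
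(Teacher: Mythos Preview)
Your proposal is correct and follows essentially the same three-case decomposition as the paper: $g\in V_c(x,e)$, $g\in F_c(x,e\leadsto f)$, or $g\in P_c(x,e\leadsto f)$. The one simplification you miss is that, once you have recovered the suitable edge $f$, you do not need to re-do the counting of $(x,e)$ by hand (tracking $\alpha$, $\beta$, endpoints of $P_c(x,e)$, etc.): since $f\in P_c(x,e)\subseteq V_c(x,e)$, the pair $(e,f)$ is an edge of $\mathcal H_c$, so Proposition~\ref{EstimateSimpleVizing} directly bounds the number of such $e$ by $(\Delta+\pi)^4$. This makes the second-stage bookkeeping (and in particular your tracking of $\beta$ in the fan case) unnecessary, though your explicit count is of course also valid.
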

\begin{proof}
There are three possible positions for $g$, namely $g\in V_c(x,e)$, $g\in F_c(x,e\leadsto f)$ or $g\in P_c(x,e\leadsto f)$ for some $e\in U_c$, $x\in e$ and $f\in P_c(x,e)$ that is superb.

The case ${g\in V_c(x,e)}$ is examined in Proposition~\ref{EstimateSimpleVizing}, by which we have at most $(\Delta+\pi)^4$ many such $e\in U_c$.

If ${g\in F_c(x,e\leadsto f)}$, then there are at most  $2\Delta$ possibilities for such an $f$ because $f\cap g\not=\emptyset$. Another use of Proposition~\ref{EstimateSimpleVizing} then gives at most $2\Delta(\Delta+\pi)^4$ possible choices of $e\in U_c$.

Finally, suppose that ${g\in P_c(x,e\leadsto f)}$.
As in Proposition~\ref{EstimateSimpleVizing}, there must be a colour $\delta\not=c(g)$ such that $\{\delta,c(g)\}$ are colours in the alternating path $P_c(x,e\leadsto f)$.
By the definition there must be a vertex $y\in V$ that is a neighbour of one of the endpoints of $P_c(x,e\leadsto f)$ and such that $y\in f$.
This gives an estimate on possible number of such edges $f$, namely there are at most $2(\Delta+\pi-1)\Delta^2$ of them.
Using Proposition~\ref{EstimateSimpleVizing} again we have at most $(\Delta+\pi)^4$ possible edges $e\in U_c$ for each such $f$.
This implies that there are at most $2(\Delta+\pi-1)\Delta^2(\Delta+\pi)^4$ choices of $e\in U_c$ such that there are $x\in e$ and superb $f\in V_c(x,e)$ with $g\in P_c(x,e\leadsto f)$.

Putting all together we obtain that
$$\deg_{\mathcal{H}_c^\leadsto}(g)\le (\Delta+\pi)^4+2\Delta(\Delta+\pi)^4+2(\Delta+\pi-1)\Delta^2(\Delta+\pi)^4\le (\Delta+\pi)^9.$$
\end{proof}

%As in the previous case, the degree of $e\in U_c$ depends on $c$, namely on $L\in \mathbb{N}$ where
If $c$ cannot be iteratively improved in $L$ steps, then we can strengthen the conclusion of Proposition~\ref{EstimateSimpleVizingDown}
by getting a quadratic in $L$ lower bound on the degrees in $U_c$ (and this will be crucial for the proof of Theorem~\ref{th:main})

\begin{proposition}\label{EstimateVizingDown}
Suppose that $c$ cannot be iteratively improved in $L$ steps.
Then
$$\deg_{\mathcal{H}_c^\leadsto}(e)\ge \frac{L}{2\Delta}\left(\frac{1}{3(\Delta+\pi)^2}\left(\frac{L}{2}-\Delta^5-1\right)-2\Delta^3\right)$$
for every $e\in U_c$.
\end{proposition}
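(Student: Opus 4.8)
The plan is to combine the lower bound on the number of superb edges with a common colour (Proposition~\ref{SuperbEdgesSamecolour}) with the lower bound on the lengths of the second alternating paths coming from the hypothesis that $c$ cannot be iteratively improved in $L$ steps. Fix $e\in U_c$ and $x\in e$; since $c$ cannot be improved in $L$ steps, we have $l(P_c(x,e))\ge L$, so we may apply Proposition~\ref{SuperbEdgesSamecolour} with this value of $L$ to obtain colours $\gamma,\theta$ and a set $H:=H_{\gamma,\theta}$ of superb edges $f\in P_c(x,e)_L$ with $P_c(x,e\leadsto f)$ coloured by $\{\gamma,\theta\}$, where
$$
|H|\ge \frac{1}{3(\Delta+\pi)^2}\left(\frac{L}{2}-\Delta^5-1\right)-2\Delta^3.
$$

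Next I would argue that each such $f\in H$ forces many edges into $W_c(x,e\leadsto f)$. Indeed, every $f\in H$ is superb and lies among the first $L$ edges of $P_c(x,e)$, so $|P_c(x,e)_{i(f)}|\le L$; by the definition of ``cannot be iteratively improved in $L$ steps'' (specifically~\eqref{eq:IILS}), we get $|P_c(x,e\leadsto f)|\ge L$. Since $P_c(x,e\leadsto f)$ is a sub-chain of $W_c(x,e\leadsto f)$, we conclude that $W_c(x,e\leadsto f)$ contains at least $L$ edges of the alternating path $P_c(x,e\leadsto f)$, each coloured by $\gamma$ or $\theta$, and each such edge $g$ satisfies $(e,g)\in E(\mathcal H_c^\leadsto)$ via this particular $x$ and $f$.

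Finally I would count the pairs $(f,g)$ with $f\in H$ and $g\in P_c(x,e\leadsto f)$: there are at least $|H|\cdot L$ of them. The key point is that a fixed edge $g\in\dom(c)$ (necessarily of colour $\gamma$ or $\theta$, else it lies on no such path) can appear in $P_c(x,e\leadsto f)$ for only few choices of $f\in H$: since $P_c(x,e\leadsto f)$ is an alternating path in the fixed two colours $\{\gamma,\theta\}$ passing through $g$, the two maximal directions from $g$ determine two possible endpoints, and $f$ must contain one of these two endpoints' neighbours that serves as the branching vertex $y$; more crudely, $f$ is determined by $y\in f$ where $y$ ranges over a set of size at most $2\Delta$, so at most $2\Delta$ values of $f$ give rise to a given $g$. (This is exactly the local-multiplicity idea already used in Proposition~\ref{EstimateVizing} for the ``$g\in P_c(x,e\leadsto f)$'' case, now run from the side of a single $e$.) Hence the number of distinct $g\in\dom(c)$ with $(e,g)\in E(\mathcal H_c^\leadsto)$ is at least $\frac{|H|\cdot L}{2\Delta}$, which gives
$$
\deg_{\mathcal H_c^\leadsto}(e)\ge \frac{L}{2\Delta}\left(\frac{1}{3(\Delta+\pi)^2}\left(\frac{L}{2}-\Delta^5-1\right)-2\Delta^3\right),
$$
as claimed.

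The step I expect to require the most care is the last one: making precise the bound ``at most $2\Delta$ values of $f$ per $g$''. One must check that an edge $g$ lying on $P_c(x,e\leadsto f)$ really does pin down, up to $O(\Delta)$ choices, the branching vertex $y^f$ (equivalently the suitable edge $f$), using that $\{\gamma,\theta\}$ is a fixed colour pair so that the alternating path through $g$ in these colours is unique up to direction, and that $f\in N(y^f)$. The other potential subtlety is bookkeeping in the degenerate Type~$0$ case, where $P_c(x,e\leadsto f)=\emptyset$; but the remark after the definition of ``cannot be iteratively improved in $L$ steps'' notes that for $L>0$ there are no superb Type~$0$ edges among the first $L$ edges of $P_c(x,e)$, so every $f\in H$ is of Type~I or Type~II and has a genuine nonempty path $P_c(x,e\leadsto f)$, and the argument goes through.
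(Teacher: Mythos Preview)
Your proposal is correct and follows essentially the same approach as the paper's proof: fix $x\in e$, invoke Proposition~\ref{SuperbEdgesSamecolour} to get the set $H=H_{\gamma,\theta}$, use the hypothesis to get $|P_c(x,e\leadsto f)|\ge L$ for each $f\in H$, and then bound the multiplicity of each edge $g$ across the family $\{P_c(x,e\leadsto f):f\in H\}$ by $2\Delta$. The only place the paper is slightly more explicit than your sketch is in the ``at most $2\Delta$ values of $f$ per $g$'' step: the paper notes that the maximal $\gamma/\theta$-path through $g$ has two endpoints, $y^f$ must be a neighbour of one of them (at most $2\Delta$ choices), and then $f$ is \emph{uniquely} determined by $y^f$ because every suitable edge has colour $\alpha=\min m_c(x)$, so there is at most one such edge at~$y^f$; you assert ``$f$ is determined by $y\in f$'' without quite stating this reason, but you correctly flag this as the step needing care.
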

\begin{proof}
Let $e\in U_c$. Fix any $x\in e$.
By our assumption on $c$ and $L$, we have that $|P_c(x,e)|\ge L$.
Using Proposition~\ref{SuperbEdgesSamecolour} for this $L$, we find colours $\delta$ and $\epsilon$ such that
$$|H_{\delta,\epsilon}|
%\ge |H_{\delta,\epsilon}\cap P_c(x,e)_L|
\ge \frac{1}{3(\Delta+\pi)^2}\left(\frac{L}{2}-\Delta^5-1\right)-2\Delta^3.$$

Let  $f\in H_{\delta,\epsilon}$ be arbitrary. By the definition of $H_{\delta,\epsilon}$, we have $|P_c(x,e)_{i(f)}|\le L$ and $f$ is superb. By our assumption (that is, by~\eqref{eq:IILS}),
the path $P_c(x,e\leadsto f)$ has at least $L$ edges, each of which gives a neighbour of $e$ in~$\mathcal{H}_c^\leadsto$.

We have to be careful as some edges may be overcounted this way. Let us show that for any edge $g$ there is at most $2\Delta$ choices of $h\in  H_{\delta,\epsilon}$ with $P_c(x,e\leadsto h)$ containing $g$. Assume that $c(g)\in\{\delta,\epsilon\}$ as otherwise no such $h$ can exist. Let $P$ be the maximal $\delta/\epsilon$-path containing $g$ (unique up its direction).
Every path $P_c(x,e\leadsto h)$ that contains $g$ as above has to be equal to $P$ or the reversal of~$P$. Thus all possible $h$ are confined to edges at distance 1 from  one of the two endpoints of $P$. Also, since $h\in P_c(x,e)$ has to be suitable and thus must have colour $\alpha=\min m_c(x)$, there are at most $2\Delta$ choices of $h$.

\hide{
Also,  there are at most $2\Delta$ edges $h\in H_{\delta,\epsilon}$ such that $P_c(x,e\leadsto f)=P_c(x,e\leadsto h)$ or $P_c(x,e\leadsto f)$ is a reversed $P_c(x,e\leadsto h)$.
This is because a fixed path $P_c(x,e\leadsto f)$ can have at most $2$ endpoints and every neighbour of these endpoints can be element of at most one such $h$ (this is because $c$ is proper and $h$ is suitable, therefore $c(h)=c(h')$ for every $h,h'\in H_{\delta,\epsilon}$).
}

This implies that 
$|H_{\delta,\epsilon}|\cdot L\le \deg_{\mathcal{H}_c^\leadsto}(e)\cdot 2\Delta$,
finishing the proof by our lower bound on~$|H_{\delta,\epsilon}|$.
\end{proof}

Again, an intuitive interpretation is that under the assumption that $c$ cannot be iteratively improved in $L$ steps, the fraction of uncoloured edges inside iterated Vizing's chains (and thus overall) is $O(1/L^2)$.

\section{Definable Multi-Graphs}\label{DefinableMultigraphs}

In this section we unify all the definitions concerning Borel multi-graphs and definable chromatic numbers.
For the sake of completeness we include several standard constructions.
The reader that is familiar with the basic concepts of measurable graph combinatorics should only briefly check our notations and skip to the end of this section where we show in the Proposition~\ref{HisBorel} that the constructions from the Section~\ref{sec:CountMult} are in fact Borel.

Recall from the Introduction that a {\it Borel multi-graph} with multiplicity bounded by $\pi\in \mathbb{N}$ is a triple $\mathcal{G}=(V,\mathcal{B},E)$ where $(V,\mathcal{B})$ is a standard Borel space and $E\subseteq [V]^2\times [\pi]$ is a  Borel subset.
(A set $A\subseteq [V]^2$ is Borel if and only if the corresponding symmetric set 
$\bigcup_{\{x,y\}\in A} \{(x,y),(y,x)\}$ is a Borel subset of $V\times V$.
Note that the set $[V]^2$ endowed with this Borel structure is a standard Borel space.)
We say that $\mathcal{G}$ is of bounded maximum degree if there is $\Delta\in \mathbb{N}$ such that $\deg_\mathcal{G}(x)=|\{e\in E:x\in e\}|\le \Delta$.
In that case we write $\Delta(\GG),\pi(\GG)\in \mathbb{N}$ to be the maximal degree and multiplicity of $\mathcal{G}$.
We denote the equivalence relation on $V$ that is generated by the connected components of $\mathcal{G}$ as $F_\mathcal{G}$.
Note that $F_\mathcal{G}$ is a countable Borel equivalence relation (see Section~\ref{se:CBER} below).
We write $(E,\mathcal{C})$  for the standard Borel space of edges of $\mathcal{G}$, i.e., $\mathcal{C}$ is the restriction of the standard Borel structure from $[V]^2\times [\pi(\mathcal{G})]$ to $E$.
There are Borel maps $s_0,s_1:E\to V$ that assign to each edge its vertices.
(For example, fix a Borel total order on $V$ and let $s_0(e)$ and $s_1(e)$ be respectively the smaller
and the larger vertices of~$e$.)
Note that since $\mathcal{G}$ is a multi-graph, we may have distinct $e,f\in E$ with $\{s_0(e),s_1(e)\}=\{s_0(f),s_1(f)\}$.
We write $\mathcal{E}:=(E,\mathcal{C},I_\mathcal{G})$ for the the {\it intersection graph} (or {\it line graph}) on $E$, i.e., $\{e,f\}\in I_{\mathcal{G}}$ if and only if $e\not=f$ and there are $i,j\in \{0,1\}$ such that $s_i(e)=s_j(f)$.
Then $I_\mathcal{G}\subseteq [E]^2$ and $\mathcal{E}$ is a Borel graph with all degrees bounded by $2\Delta(\mathcal{G})-2$.
We write $F_\mathcal{E}$ for the countable Borel equivalence relation on $E$ that is generated by the connected components of $\mathcal{E}$. 
Usually, we do not mention the corresponding Borel $\sigma$-algebras, i.e., we write $\mathcal{G}=(V,E)$, etc.

A {\it Borel chromatic number} $\chi_\mathcal{B}(\mathcal{G})$ of a Borel multi-graph $\mathcal{G}$ is the minimal $k\in \mathbb{N}$ such that there is a full proper Borel vertex colouring $d:V\to [k]$ of $\mathcal{G}$.
Similarly we define the {\it Borel chromatic index} $\chi'_\mathcal{B}(\mathcal{G})$ as the minimal $k\in \mathbb{N}$ such that there is a full proper Borel (vertex) colouring $c:E\to [k]$ of $I_\mathcal{G}$.
It is easy to see that with our notation
$$\chi'_\mathcal{B}(\mathcal{G})=\chi_\mathcal{C}(\mathcal{E}).$$
Compare both definitions with the ones given in the Introduction.
Note also that the subscript in the definition refers to the concrete corresponding Borel $\sigma$-algebra.
However in the sequel we write simply $B$ to refer to the Borel $\sigma$-algebra, i.e., $\chi_B(\mathcal{G})$ or $\chi_B(\mathcal{E})$.
It will be always clear from the context to which Borel $\sigma$-algebra we are referring. 

\subsection{Countable Borel equivalence relations}\label{se:CBER} 

For the convenience of the reader we recall basic notions and properties of countable Borel equivalence relations that can be found for example in \cite{KechrisMiller:toe}.
A \emph{countable Borel equivalence relation} is a pair $(X,F)$ where $X$ is a standard Borel space (we do not mention the Borel $\sigma$-algebra) and $F\subseteq X\times X$ is a Borel equivalence relation with cardinality of each equivalence class at most countable, for example $(V,F_\mathcal{G})$ or $(E,F_{\mathcal{E}})$.
For a given $A\subseteq X$ we write $[A]_F$ for the {\it $F$-saturation of $A$}, i.e., $y\in [A]_F$ if there is $x\in A$ such that $(x,y)\in F$.
If $A\subseteq X$ is Borel, then so is $[A]_F$.
We write $\mathcal{P}(X)$ for the space of all Borel probability measures on $X$.
We say that $\mu\in \mathcal{P}(X)$ is {\it $F$-invariant} (or $\mathcal{G}$-invariant in the case when $F=F_\mathcal{G}$) if $\mu(A)=\mu(B)$ for every Borel subsets $A,B\subseteq X$ such that there is a Borel bijection $\phi:A\to B$ satisfying $(x,\phi(x))\in F$ for every $x\in A$.
It is a standard fact that in the situation where $\mathcal{G}=(V,E)$ and $\mu\in \mathcal{P}(V)$ the definition of $E$-invariant measure from the Introduction coincide with $\mathcal{G}$-invariant ($F_\mathcal{G}$-invariant) measure given here.
We denote the set of all $F$-invariant measures as $\mathcal{I}_F$.
We say that $\mu\in \mathcal{P}(X)$ is {\it $F$-quasi-invariant} if for every Borel set $A\subseteq X$ we have $\mu([A]_F)=0$ if and only if $\mu(A)=0$.
We denote the set of all $F$-quasi-invariant measures as $\mathcal{QI}_{F}$.

\subsection{Measures on $E$}

Let $\mathcal{G}=(V,E)$ be a Borel multi-graph with bounded maximum degree.
Let $\mu\in \mathcal{P}(V)$.
We describe a way how to produce $\hat\mu\in \mathcal{P}(E)$ that reflects some properties of $\mu$.
Let $A\subseteq E$ be Borel, i.e., the corresponding symmetrization $\{(x,y,k)\in V\times V\times [\pi(\mathcal{G})]: (\{x,y\},k)\in A\}$ is Borel in $V\times V\times [\pi(\mathcal{G})]$.
For $x\in V$,  we define 
$$e_A(x):=|\{(\{x,y\},k)\in A\}|$$
to be the number of edges in $A$ (counting their multiplicities) that contain~$x$.
By the assumption we have $e_A(x)\le \Delta(\mathcal{G})$ for every $x\in V$ and it follows from the Lusin-Novikov Uniformisation Theorem (see e.g.,~\cite[Theorem~18.10]{Kechris:cdst}) that $e_A:V\to \{0,\dots,\Delta(\mathcal{G})\}$ is a Borel function.
Note also that $e_E(x)=\deg_{\mathcal{G}}(x)$ for every $x\in V$.
Finally we put
$$\hat\mu(A):=\frac{1}{\int_V \deg_\mathcal{G}(x)d\mu}\int_V e_A(x)d\mu.$$
It is a standard fact that $\hat\mu$ is a Borel probability measure on $E$. 
Compare this definition with the measures $M_{L}$ and $M_{R}$ defined on a countable Borel equivalence relation $F$ for example in \cite[Section~8]{KechrisMiller:toe}.

\begin{proposition}\label{MeasureHat}
Let $\mu \in \mathcal{P}(V)$.
Then $\hat\mu\in \mathcal{P}(E)$ satisfies the following:
\begin{enumerate}[(i), nosep]
    \item if $\hat\mu(A)<\epsilon$, then $\mu(\{x\in V:(\exists e\in A) \ x\in e\})<\Delta(\mathcal{G})\epsilon$,
    \item if $\mu$ is $\mathcal{G}$-invariant, then $\hat\mu$ is $\mathcal{E}$-invariant,
    \item if $\mu$ is $\mathcal{G}$-quasi-invariant, then $\hat\mu$ is $\mathcal{E}$-quasi-invariant.
\end{enumerate}
\end{proposition}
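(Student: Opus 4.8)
The plan is to verify the three assertions of Proposition~\ref{MeasureHat} separately, each being a routine unwinding of the definition $\hat\mu(A)=\frac{1}{\int_V \deg_\mathcal{G}\,d\mu}\int_V e_A\,d\mu$, combined with the appropriate invariance property of $\mu$ transported through the Borel source maps $s_0,s_1:E\to V$.

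\textbf{Part (i).} Suppose $\hat\mu(A)<\epsilon$, so $\int_V e_A(x)\,d\mu < \epsilon\int_V \deg_\mathcal{G}\,d\mu \le \epsilon\,\Delta(\mathcal{G})$, using $\deg_\mathcal{G}\le\Delta(\mathcal{G})$ pointwise. Write $B:=\{x\in V: \exists e\in A,\ x\in e\}$. For every $x\in B$ we have $e_A(x)\ge 1$, so $\mathbbm{1}_B \le e_A$ pointwise, hence $\mu(B)=\int_V \mathbbm{1}_B\,d\mu \le \int_V e_A\,d\mu < \Delta(\mathcal{G})\,\epsilon$, as claimed. (Note $B$ is Borel by Lusin--Novikov, but this is already implicit in the excerpt.)

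\textbf{Part (ii).} Assume $\mu$ is $\mathcal{G}$-invariant. I must show $\hat\mu$ is $\mathcal{E}$-invariant, i.e.\ that $\hat\mu(A)=\hat\mu(A')$ whenever $\psi:A\to A'$ is a Borel bijection between Borel subsets of $E$ with $(e,\psi(e))\in I_\mathcal{G}$ for all $e\in A$ — equivalently, $\psi$ is a Borel matching of the line graph $\mathcal{E}$. Since the normalising constant is fixed, it suffices to prove $\int_V e_A\,d\mu=\int_V e_{A'}\,d\mu$. The standard device is to decompose $\psi$ into finitely many pieces on each of which the behaviour of $\psi$ relative to $s_0,s_1$ is constant: for $i,j\in\{0,1\}$ let $A_{ij}:=\{e\in A: s_i(e)=s_j(\psi(e))\}$ (choosing, say, the lexicographically least such pair to make the pieces disjoint), a Borel partition of $A$; then on $A_{ij}$ the map $e\mapsto \psi(e)$ pushes forward to a partial injection $V\to V$ whose graph lies in $F_\mathcal{G}$, obtained by further partitioning $A_{ij}$ into countably many Borel pieces on which $s_0,s_1\circ\psi$ realize the same vertex and the ``other endpoint'' maps are Borel injections (again invoking Lusin--Novikov to count fibres). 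On each such piece, $\mathcal{G}$-invariance of $\mu$ gives equality of the $\mu$-measures of the relevant vertex sets, and summing the multiplicity counts recovers $\int_V e_A\,d\mu=\int_V e_{A'}\,d\mu$. An alternative, cleaner route: observe that for any Borel $A\subseteq E$, $\int_V e_A\,d\mu=\int_E (\mathbbm{1}_{\{s_0(e)\}}+\mathbbm{1}_{\{s_1(e)\}})$-type counting can be rewritten as a sum over the two source maps, and $\mathcal{G}$-invariance is exactly the statement that these ``edge-counting'' measures are insensitive to sliding an edge along an intersecting edge; I would phrase the argument via the measures $M_L, M_R$ on $F_\mathcal{G}$ referenced from \cite[Section~8]{KechrisMiller:toe}, noting $\hat\mu$ is (a normalization of) the pushforward of such a measure under $E\to F_\mathcal{G}$ and invariance is inherited.

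\textbf{Part (iii).} Assume $\mu$ is $\mathcal{G}$-quasi-invariant; I must show $\hat\mu$ is $\mathcal{E}$-quasi-invariant, i.e.\ for Borel $A\subseteq E$, $\hat\mu([A]_{F_\mathcal{E}})=0 \iff \hat\mu(A)=0$. Since $A\subseteq [A]_{F_\mathcal{E}}$ one direction is trivial, so assume $\hat\mu(A)=0$, equivalently $e_A(x)=0$ for $\mu$-a.e.\ $x$, equivalently $\mu(V(A))=0$ where $V(A)$ is the set of vertices incident to $A$. Because any edge $F_\mathcal{E}$-equivalent to some $e\in A$ lies in the same connected component of $\mathcal{G}$ as $e$, we get $V([A]_{F_\mathcal{E}})\subseteq [V(A)]_{F_\mathcal{G}}$. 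Quasi-invariance of $\mu$ yields $\mu([V(A)]_{F_\mathcal{G}})=0$, hence $\mu(V([A]_{F_\mathcal{E}}))=0$, hence $e_{[A]_{F_\mathcal{E}}}=0$ $\mu$-a.e., hence $\hat\mu([A]_{F_\mathcal{E}})=0$. (For the converse direction one just notes $\hat\mu(A)\le\hat\mu([A]_{F_\mathcal{E}})$.)

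\textbf{Main obstacle.} The only non-cosmetic difficulty is Part (ii): one has to be careful that ``$\mathcal{E}$-invariance'' is tested against matchings of the \emph{line} graph, and translating such a matching into a $\mu$-preserving statement about $V$ requires the Lusin--Novikov decomposition of each edge-bijection piece according to which of the four endpoint-pairs $(s_i,s_j)$ it identifies, plus bookkeeping of edge multiplicities so that the contributions to $e_A$ and $e_{A'}$ match up term by term. I expect this to be the step that needs the most care, though it is entirely routine; everything in (i) and (iii) is a one-line measure-theoretic manipulation once the definitions are unwound.
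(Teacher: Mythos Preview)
Your Parts (i) and (iii) are essentially identical to the paper's proof. For Part (ii) your outline is sound, but the paper takes a cleaner route that avoids the endpoint bookkeeping you flagged as the main obstacle: rather than decomposing the bijection $\psi$ according to which pair $(s_i,s_j)$ realises the shared vertex, the paper first reduces to the case where both $A$ and $A'=\psi(A)$ are \emph{independent} sets in $\mathcal{E}$, using that $\chi_B(\mathcal{E})<\infty$ by Theorem~\ref{th:KST}. Under this reduction each vertex lies in at most one edge of $A$ (and of $A'$), so $s_0|_A$ and $s_1|_A$ are already Borel injections into $V$; then $\mu(s_0(A))=\mu(s_1(A))$ by $\mathcal{G}$-invariance gives $\int_V e_A\,d\mu=2\mu(s_0(A))$, and the single map $s_0(e)\mapsto s_0(\psi(e))$ is a Borel injection with graph in $F_\mathcal{G}$, yielding $\mu(s_0(A))=\mu(s_0(A'))$ in one stroke. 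Your Lusin--Novikov decomposition would eventually arrive at the same conclusion, but the independence reduction buys you injectivity of the source maps for free and eliminates the multiplicity accounting entirely.
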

\begin{proof}
By the definition and simple computation we have $$\hat\mu(A)=\frac{1}{\int_V \deg_\mathcal{G}(x)d\mu}\int_V e_A(x)d\mu\ge \frac{1}{\Delta(\mathcal{G})}\,\mu(\{x\in V:(\exists e\in A) \ x\in e\}).$$
This proves the first item.

To prove the second item fix some Borel injection $i:A\to B$ where $A,B\subseteq E$ are Borel.
We may suppose that each $A$ and $B$ are independent in the intersection graph $\mathcal{E}$.
This follows from the fact that the degree of $\mathcal{E}$ is bounded and therefore by Theorem~\ref{th:KST} we have $\chi_B(\mathcal{E})<\infty$.
Define $A_0=s_0(A)$ and $B_0=s_0(B)$.
We have
$\mu(A_0)=\mu(s_1(A))$ by $\mathcal{G}$-invariance of $\mu$ and therefore
$$\hat\mu(A)=\frac{1}{\int_V \deg_\mathcal{G}(x)d\mu}\int_V e_A(x)d\mu=\frac{2\mu(A_0)}{\int_V \deg_\mathcal{G}(x)d\mu}.$$
Similarly we have
$$\hat\mu(B)=\frac{2\mu(B_0)}{\int_V \deg_\mathcal{G}(x)d\mu}.$$
The map $j:A_0\to B_0$ that sends $x$ to $s_0(i(e))$ where $e\in A$ is the unique edge in $A$ such that $x=s_0(e)$ is clearly a Borel injection.
It follows that $\mu(A_0)=\mu(B_0)$ and we are done.

The third item easily follows from the first part.
Namely, suppose that $\hat\mu(A)=0$.
Then $\mu(\{x\in V:(\exists e\in A) \ x\in e\})=0$ and therefore $\mu([\{x\in V:(\exists e\in A) \  x\in e\}]_{F_\mathcal{G}})=0$ by the assumption.
This implies that $\hat{\mu}([A]_{F_{\mathcal{E}}})=0$ by the definition of $\hat\mu$.
\end{proof}

\subsection{Quasi-invariant measures}

Let $(X,F)$ be a countable Borel equivalence relation and $\mu\in \mathcal{P}(X)$.
We describe the standard construction of $\tilde\mu\in \mathcal{Q}\mathcal{I}_F$ such that $\mu\ll \tilde{\mu}$ (that is, $\mu$ is absolutely continuous with respect to $\tilde{\mu}$) and $\mu([A]_F)=\tilde\mu([A]_F)$ for every Borel set $A\subseteq X$.
First we use the Feldman-Moore theorem (see e.g.,\ \cite[Theorem~1.3]{KechrisMiller:toe}) to find a sequence $\{i_n\}_{n<\omega}$ of Borel involutions of $X$ that \emph{graph} $F$, i.e., for every $(x,y)\in F$ there is $n<\omega$ such that $i_n(x)=y$.
We always assume that $i_0=\mathrm{id}_X$ is the identity map on~$X$.
We define $\tilde{\mu}$ as
$$\tilde{\mu}(A)=\sum_{n<\omega}\frac{i_n^*\mu(A)}{2^{n+1}}$$
where $A\subseteq X$ is a Borel set and $i_n^*\mu(A)=\mu(i_n^{-1}(A))$ (that is, $i_n^*\mu$ is the push-forward of $\mu$ along~$i_n$).

\begin{proposition}\label{BasicQuasi}
Let $(X,F)$ be a countable Borel equivalence relation and $\mu\in \mathcal{P}(X)$.
Then
\begin{enumerate}[(i), nosep]
    \item\label{it:BQ1} $\tilde{\mu}\in \mathcal{P}(X)$,
    \item\label{it:BQ2} $\tilde\mu$ is $F$-quasi-invariant,
%    \item\label{it:BQ3} if $\tilde\mu(A)=0$, then $\mu(A)=0$,
    \item\label{it:BQ4} $\mu(A)\le 2\tilde\mu(A)$,
    \item\label{it:BQ5} $\tilde\mu([A]_F)=\mu([A]_F)$ for every Borel set $A\subseteq X$.
\end{enumerate}
\end{proposition}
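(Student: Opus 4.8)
The plan is to verify the four items essentially by unwinding the definition $\tilde\mu(A)=\sum_{n<\omega} 2^{-(n+1)} i_n^*\mu(A)$, exploiting that each $i_n$ is a Borel involution and $i_0=\mathrm{id}_X$. For item~\ref{it:BQ1}, each push-forward $i_n^*\mu$ is a Borel probability measure (a push-forward of a probability measure along a Borel map is a Borel probability measure), so $\tilde\mu$ is a convex combination, with weights $2^{-(n+1)}$ summing to $1$, of probability measures; countable convexity of the space of Borel probability measures gives $\tilde\mu\in\mathcal P(X)$, and countable additivity of $\tilde\mu$ follows from that of each summand together with the monotone convergence theorem (to interchange the sum over $n$ with the sum over a partition of $A$). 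Item~\ref{it:BQ4} is immediate: since $i_0=\mathrm{id}_X$, the $n=0$ term already contributes $\tfrac12\mu(A)$, hence $\tilde\mu(A)\ge \tfrac12\mu(A)$, i.e.\ $\mu(A)\le 2\tilde\mu(A)$ (and in particular $\mu\ll\tilde\mu$).

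For item~\ref{it:BQ5}, first observe that each $i_n$, being a Borel involution graphing $F$, maps any Borel set $A$ into $[A]_F$ and satisfies $i_n^{-1}([A]_F)=[A]_F$ because $[A]_F$ is $F$-invariant and $i_n$ preserves $F$-classes. Hence $i_n^*\mu([A]_F)=\mu(i_n^{-1}([A]_F))=\mu([A]_F)$ for every $n$, and summing against the weights $2^{-(n+1)}$ gives $\tilde\mu([A]_F)=\mu([A]_F)$. For item~\ref{it:BQ2}, I need $\tilde\mu([A]_F)=0\iff\tilde\mu(A)=0$ for Borel $A$ (the forward implication is trivial by monotonicity since $A\subseteq[A]_F$). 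Suppose $\tilde\mu(A)>0$; I want $\tilde\mu([A]_F)>0$. Since $\{i_n\}_{n<\omega}$ graphs $F$, we have $[A]_F=\bigcup_{n<\omega} i_n(A)$, so it suffices to show each $i_n(A)$ is Borel with $\tilde\mu(i_n(A))>0$ for at least one $n$ — in fact already $n=0$ works, as $i_0(A)=A$. More directly: $[A]_F\supseteq A$, so $\tilde\mu([A]_F)\ge\tilde\mu(A)>0$. Thus the nontrivial direction of quasi-invariance is in fact automatic from $A\subseteq[A]_F$; the point of the construction is rather the \emph{other} equivalence needed for $F$-quasi-invariance, namely that $\tilde\mu$ is genuinely invariant in the sense that $\tilde\mu(\phi(B))$ and $\tilde\mu(B)$ vanish together for Borel partial $F$-automorphisms $\phi$ — one checks that if $\tilde\mu(B)=0$ then $i_n^*\mu(B)=0$ for all $n$, whence $\mu(i_n^{-1}(B))=0$ for all $n$, and since $\phi(B)\subseteq\bigcup_n i_n(B)$ up to sets on which the relevant $i_n$ agrees with $\phi$, one gets $\tilde\mu(\phi(B))=0$ by applying the same bound to $i_m^*\tilde\mu$ and using that $i_m\circ i_n$ ranges over a graphing of $F$.

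The main obstacle I anticipate is purely bookkeeping in item~\ref{it:BQ2}: making precise the claim that a null set stays null under $F$-related rearrangements requires knowing that $\tilde\mu$ is invariant under each $i_n$ up to absolute continuity, which in turn needs the observation that $i_m^*\tilde\mu = \sum_n 2^{-(n+1)} (i_m\circ i_n)^*\mu$ and that $\{i_m\circ i_n : n<\omega\}$ still graphs $F$, so $i_m^*\tilde\mu \ll \tilde\mu$ and $\tilde\mu\ll i_m^*\tilde\mu$ for every $m$ (the Radon–Nikodym derivatives are the relevant cocycle). Once that symmetry is recorded, quasi-invariance drops out. Everything else — measurability of $e_A$-type quantities is not even needed here, only standard descriptive-set-theoretic facts about Borel images of Borel sets under injective (here, bijective) Borel maps, which are Borel by Lusin–Souslin — is routine and I would not belabour it in the write-up.
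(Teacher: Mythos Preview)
Your treatment of items~\ref{it:BQ1}, \ref{it:BQ4} and~\ref{it:BQ5} matches the paper's proof essentially verbatim; these are fine.

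Your handling of item~\ref{it:BQ2}, however, is muddled. You correctly identify the biconditional $\tilde\mu([A]_F)=0\iff\tilde\mu(A)=0$ and note that the forward implication is trivial by $A\subseteq[A]_F$. But then you write ``Suppose $\tilde\mu(A)>0$; I want $\tilde\mu([A]_F)>0$'' --- this is the \emph{contrapositive of the same trivial implication}, not the nontrivial direction. The nontrivial direction is $\tilde\mu(A)=0\Rightarrow\tilde\mu([A]_F)=0$, and your sentence ``the nontrivial direction of quasi-invariance is in fact automatic from $A\subseteq[A]_F$'' is simply false. You seem to notice something is off and then pivot to the partial-automorphism formulation, sketching a correct but unnecessarily heavy argument via the mutual absolute continuity $i_m^*\tilde\mu\sim\tilde\mu$ and the fact that $\{i_m\circ i_n:n<\omega\}$ still graphs~$F$.

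The paper's argument for~\ref{it:BQ2} is much cleaner and avoids this detour entirely by using the already-established item~\ref{it:BQ5}. Assume $\tilde\mu(A)=0$ but $\tilde\mu([A]_F)>0$. By~\ref{it:BQ5}, $\mu([A]_F)=\tilde\mu([A]_F)>0$. Since $[A]_F=\bigcup_{n<\omega} i_n^{-1}(A)$, some $\mu(i_n^{-1}(A))>0$, whence $\tilde\mu(A)\ge 2^{-(n+1)}i_n^*\mu(A)>0$, a contradiction. This is a three-line argument; your route through compositions $i_m\circ i_n$ and Radon--Nikodym derivatives works in principle but is overkill here and, as written, is entangled with the earlier confusion about which implication actually needs proof.
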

\begin{proof}
It is a standard fact that $i_n^*\mu$ is a probability Borel measure for every $n<\omega$ and thus $\tilde\mu\in \mathcal{P}(X)$.

Let $A\subseteq X$.
Then since $i_0=\mathrm{id}_X$ we have
$$\frac{\mu(A)}{2}\le \frac{\mu(A)}{2}+\sum_{0<n<\omega}\frac{i^*_n\mu(A)}{2^{n+1}}=\tilde{\mu}(A).$$
This implies Item~\ref{it:BQ4}.
%Items~\ref{it:BQ3} and~\ref{it:BQ4}.
It is easy to see that $i_n^*\mu([A]_F)=\mu([A]_F)$ for every  Borel $A\subseteq X$.
This gives Item~\ref{it:BQ5}.

It remains to show that $\tilde{\mu}$ is $F$-quasi-invariant.
Let $A\subseteq X$ be such that $\tilde{\mu}(A)=0$.
Suppose that $\mu([A]_F)=\tilde{\mu}([A]_F)\not=0$.
There must be some $n<\omega$ such that $\nu(i_n^{-1}(A))\not=0$ because $[A]_F=\bigcup_{n<\infty}i_n^{-1}(A)$.
Then we have 
$$\tilde{\mu}(A)\ge \frac{i_n^*\mu(A)}{2^{n+1}}>0,$$
which is a contradiction.
\end{proof}

\subsection{$\mathcal{H}_c$ and $\mathcal{H}^\leadsto_c$ for $\mathcal{G}$}\label{HcBorel}

Let $\mathcal{G}=(V,E)$ be a Borel multi-graph with bounded maximum degree.
Suppose that $c;E\to [\Delta(\mathcal{G})+\pi(\mathcal{G})]$ is a proper partial Borel colouring.
Note that since each connected component of $\mathcal{G}$ is countable we may use the definition from Subsection~\ref{Estimates} to define the bipartite graphs $\mathcal{H}_c$ and $\mathcal{H}_c^\leadsto$ with Borel partitions $U_c\subseteq E$ and $\dom(c)\subseteq E$.
Clearly we have $\mathcal{H}_c,\mathcal{H}_c^\leadsto\subseteq F_{\mathcal{E}}$ and the estimates on $\deg_{\mathcal{H}_c},\deg_{\mathcal{H}^\leadsto_c}$ from Propositions~\ref{EstimateSimpleVizing}, \ref{EstimateSimpleVizingDown}, \ref{EstimateVizing} and~\ref{EstimateVizingDown} are still valid.

\begin{proposition}\label{HisBorel}
The bipartite graphs $\mathcal{H}_c$ and $\mathcal{H}_c^\leadsto $ are Borel as subsets of $U_c\times \dom(c)$.
\end{proposition}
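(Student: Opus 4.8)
The plan is to show that membership in $\mathcal{H}_c$ and in $\mathcal{H}_c^\leadsto$ is decidable by a Borel procedure, exploiting the fact that both relations are contained in the countable Borel equivalence relation $F_{\mathcal{E}}$ and that each of the combinatorial objects built in Section~\ref{sec:CountMult} (alternating paths, fans, Vizing chains, iterated Vizing chains) is constructed by a purely local, deterministic algorithm from the data $(\mathcal{G},c)$ together with a choice of basepoint/orientation. The key structural input is the Lusin--Novikov uniformisation theorem: it lets us pick, in a Borel way, enumerations of the edges and of the neighbours, and more importantly it guarantees that the countably many ``$n$-step'' relations assembled below are Borel, so that their union (a countable union) is Borel as well.

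First I would fix Borel ``bookkeeping'' maps: the vertex maps $s_0,s_1:E\to V$ already introduced, and, via Lusin--Novikov applied to $I_{\mathcal{G}}\subseteq E\times E$, a Borel enumeration $(n_j)_{j<2\Delta-2}$ of the edge-neighbourhood, so that for each $e$ the set $N(s_0(e))\cup N(s_1(e))$ is listed by finitely many Borel partial functions. Since $\mathcal{G}$ has bounded degree and $c$ is Borel, all the ``elementary'' queries used inside the constructions are Borel: ``which edge at $x$ has colour $\gamma$?'' (a Borel partial function of $(x,\gamma)$ by Lusin--Novikov), ``what is $m_c(x)$?'' (a Borel map $V\to \mathcal{P}([\Delta+\pi])$), ``what is the minimal missing/available colour at the $i$-th step?'' (a Borel function of the finite data accumulated so far). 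Consequently, for each fixed $n$, the map sending $(e,x)$ with $x\in e$ to the $n$-th prefix of $V_c(x,e)$ — equivalently, the finite sequence of edges, vertices and colours produced by running the fan-then-alternating-path algorithm for $n$ steps — is a Borel partial function into the standard Borel space $E^{\le n}$ of finite edge-sequences. (When the construction terminates earlier, it stabilises; this case distinction is itself Borel.) The same applies to $P_c(x,e)_n$, and, layering one more construction on top, to the iterated chains $W_c(x,e\leadsto f)$: given $(e,x,f)$ with $f$ a suitable edge on $P_c(x,e)$ — and ``suitable'' is a Borel condition on $(e,x,f)$ by Definition~\ref{de:suitable}, being expressible through finitely many Borel queries about distance, position and colour — the type of $f$, whether it is superb, and the resulting finite chain $W_c(x,e\leadsto f)$ are all computed by finitely many Borel operations, hence depend Borel-measurably on $(e,x,f)$.

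Second, I would express membership as a countable union. For $\mathcal{H}_c$: $(e,g)\in E(\mathcal{H}_c)$ iff there exists $x\in e$ (two choices, $s_0(e)$ or $s_1(e)$) and an index $n\ge 1$ such that the $n$-th edge of $V_c(x,e)$ exists and equals $g$. For each fixed $x\in\{s_0(e),s_1(e)\}$ and each fixed $n$, the set $\{(e,g): n\text{-th edge of }V_c(x,e)\text{ exists and equals }g\}$ is Borel (it is the preimage of the diagonal under the Borel map $(e,g)\mapsto(\text{$n$-th edge of }V_c(x,e),\,g)$). Taking the union over the two basepoints and over $n\in\mathbb{N}\setminus\{0\}$ gives that $E(\mathcal{H}_c)$ is Borel in $U_c\times\dom(c)$. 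For $\mathcal{H}_c^\leadsto$ one adds an existential quantifier over the edge $f$: $(e,g)\in E(\mathcal{H}_c^\leadsto)$ iff there is $x\in e$, a superb $f\in P_c(x,e)$, and an index $n$ with the $n$-th edge of $W_c(x,e\leadsto f)$ equal to $g$. The quantifier over $f$ ranges over the edges of $P_c(x,e)$, which, crucially, lie in the $F_{\mathcal{E}}$-class of $e$; writing $f$ as the $m$-th edge of $P_c(x,e)$ makes $f$ a Borel function of $(e,x,m)$, so after this substitution the only quantifiers left are over the countable parameter set $\{s_0(e),s_1(e)\}\times\mathbb{N}\times\mathbb{N}$ of triples $(x,m,n)$, and for each fixed triple the condition ``$f:=m$-th edge of $P_c(x,e)$ is superb, and the $n$-th edge of $W_c(x,e\leadsto f)$ equals $g$'' is Borel in $(e,g)$. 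Hence $E(\mathcal{H}_c^\leadsto)$ is a countable union of Borel sets, therefore Borel.

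The main obstacle, and the only place that needs genuine care, is the bookkeeping that justifies ``for each fixed $n$, the $n$-th prefix of the chain is a Borel function of the basepoint data.'' This is where one must verify, step by step through Subsections~\ref{AlterPath}--\ref{ItVizingChain}, that every branching and every lookup in the algorithms (the choice of the minimal available colour, the detection of the first and second critical indices, the determination of the type of a suitable edge, the superbness test comparing $P_c(u_m,\alpha/\beta)$ with $P_{c_f}(u_m,\alpha/\beta)$) is performed using only finitely many Borel queries into $(\mathcal{G},c)$. The superbness test looks like it might require comparing two infinite paths, but in fact two maximal alternating paths with the same starting vertex and the same colour pair, agreeing on a common colouring, coincide up to prefix by Claim~\ref{Basic}\ref{it:Biii}; so the test reduces to checking agreement on finitely many initial edges together with a length comparison, all of which is Borel. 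Once this local-to-Borel translation is spelled out for each building block, the countable-union argument above closes the proof. I would therefore structure the write-up as: (1) fix Borel enumerations via Lusin--Novikov; (2) a lemma asserting that $(e,x)\mapsto V_c(x,e)_n$ and $(e,x,m,n)\mapsto\big(\text{$m$-th edge of }P_c(x,e),\ W_c(x,e\leadsto\cdot)_n\big)$ are Borel for each fixed $n$ (and each $m$), with the verification delegated block-by-block to the constructions of Section~\ref{sec:CountMult}; (3) the two countable-union computations for $\mathcal{H}_c$ and $\mathcal{H}_c^\leadsto$.
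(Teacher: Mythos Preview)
Your proposal is correct and takes essentially the same approach as the paper: show that the chain-building assignments $e\mapsto V_c(s_l(e),e)$ and $(e,f)\mapsto W_c(s_l(e),e\leadsto f)$ are Borel (the paper packages them as maps into the sequence space $E^{<\omega}\cup E^\omega$, you compute the $n$-th edge for each fixed $n$; these are equivalent), and then express $\mathcal{H}_c$ and $\mathcal{H}_c^\leadsto$ as countable unions over the basepoint $l\in\{0,1\}$ and the edge indices.

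One small correction worth noting: your justification of the superbness test via Claim~\ref{Basic}\ref{it:Biii} does not apply as written, since $c$ and $c_f$ genuinely differ on the edges of $V_c(x,e)_{i(f)}$, and any divergence between $P_c(u_m,\alpha/\beta)$ and $P_{c_f}(u_m,\alpha/\beta)$ need not occur within a bounded initial segment of the path. The clean fix is simply that each of these two alternating paths is a Borel function of $(e,x,f)$ into the sequence space (computed edge by edge), so their equality is the countable intersection over $n$ of the Borel conditions ``the $n$-th edges agree'' together with the Borel length comparison --- hence Borel. (The paper's own proof is also somewhat loose at this point, asserting that non-superbness is visible in a finite neighbourhood of~$f$.)
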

\begin{proof}
Intuitively this follows from the local nature of the definitions from Section~\ref{sec:CountMult}.
We comment only briefly why these are Borel.
(Also, similar arguments are used in many places in the next sections.)
Write $Y=E^{<\omega}\cup E^\omega$ for the standard Borel space of all finite and countable sequences of edges of $\mathcal{G}$.
Recall that $s_0,s_1:E\to V$ are Borel maps that assign to each edge its vertices.
The following objects are Borel:
\begin{enumerate}[(i),nosep]
	\item\label{it:HB1} $V_c(s_l(\_),\_):U_c\to Y$ where $l\in \{0,1\}$ and $V_c(s_l(e),e)$ is the Vizing chain,
	\item\label{it:HB2} $\mathrm{SPB}_l\subseteq U_c\times E$ where $l\in \{0,1\}$ and $(e,f)\in \mathrm{SPB}_l$ if $f\in V_c(s_l(e),e)$ is superb,
	\item\label{it:HB3} $W_c(s_l(\_),\_\leadsto \_):\mathrm{SPB}_l\to Y$ where $l\in \{0,1\}$ and $W_c(s_l(e),e \leadsto f)$ is the iterated Vizing chain for every $(e,f)\in \mathrm{SPB}_l$.
\end{enumerate}
Once we see this, then we are done.
For example
\begin{eqnarray*}
(e,f)\in \mathcal{H}^\leadsto_c & \Leftrightarrow & (\exists p,q\in \mathbb{N})\ (\exists l\in \{0,1\}) \ (e,V_c(s_l(e),e)(p))\in \mathrm{SPB}_l \\
&&\& \ \ W_c\left(s_l(e),e\leadsto V_c(s_l(e),e)(p)\right)(q)=f,
\end{eqnarray*}
which implies that $\mathcal{H}_c^\leadsto$ is Borel by the Lusin-Novikov Uniformisation Theorem
(\cite[Theorem~18.10]{Kechris:cdst}).
%, a special case of which states that a continuous countably-to-one image of a Borel set is Borel).
Next we comment why the objects from \ref{it:HB1}, \ref{it:HB2} and \ref{it:HB3} are Borel.

\ref{it:HB1}:
Fix $l\in \{0,1\}$ and let $e\in U_c$.
The assignment $e \mapsto F_c(s_l(e),e)$ depends only on the radius-2 neighbourhood of $e\in \mathcal{G}$ and is therefore Borel. Likewise, the indicator function of $F_c(s_l(e),e)$ being augmenting as well as the pair $j<k$  and the (smallest possible) colours $\alpha,\beta$ returned by respectively Proposition~\ref{TwoIndices} and Proposition~\ref{VizingPath} are Borel functions of~$e$.
If the first
critical index $i$ is not equal to $j$, then the alternating path $P_c(v_j,\alpha/\beta)$ ends in $s_l(e)$; so the set of the corresponding $e$ is the countable union over $m\in \mathbb{N}$, the length of this path, of locally defined and thus Borel sets.
Finally, the assignment $e\mapsto P_c(s_l(e),e)$ is clearly Borel, as it is determined by the above parameters.

\ref{it:HB2}:
Fix $l\in \{0,1\}$ and let $f\in V_c(s_l(e),e)$.
The fact that $f$ is not superb can be seen in a finite  neighbourhood around $f$ in $\mathcal{G}$.
This implies that $\mathrm{SPB}_l$ is Borel.

\ref{it:HB3}:
Fix $l\in \{0,1\}$ and let $(e,f)\in \mathrm{SPB}_l$, which is Borel by (2).
By (1) we know that the alternating colours of $P_c(s_l(e),e)$ can be computed in a Borel way from $e$.
As in~(1), the second critical index and the type of $f$ are Borel.
%as they depend only on $W_c(s_i(e),e)$ and a finite neighborhood around $f$ in $\mathcal{G}$.
Thus $(e,f)\mapsto F_c(s_l(e),e\leadsto f)$ and the (smallest possible) alternating colours of $P_c(s_l(e),e\leadsto f)$ are Borel assignments.
This allows us to construct $W_c(s_l(e),e\leadsto f)$ in a Borel fashion and we are done.
\end{proof}

\section{Invariant Measures}\label{sec:Invariant}

Let $\mathcal{G}=(V,E)$ be a Borel multi-graph of bounded maximum degree and $\mu \in \mathcal{P}(V)$.
Recall that for given $\mu\in \mathcal{P}(V)$ we defined
$\chi_\mu(\mathcal{G})$
to be the minimum value of $\chi_B(\mathcal{G}\upharpoonright C)$ where $C\subseteq V$ is a Borel $\mu$-co-null set.
Also, the measurable chromatic index
$\chi_\mu'(\mathcal{G})$ was defined analogously, except we properly colour edges instead of vertices, i.e., $\chi_\mu'(\mathcal{G})$ is the minimum over $\chi_B(\mathcal{E}\upharpoonright C)$ where $C\subseteq E$ is such that $\mu(\{x\in V:(\exists e\in E\setminus C) \ x\in e\})=0$ (see the Introduction).
In this section we prove Theorem~\ref{th:main}, our main result, and its corollary Theorem~\ref{th:k}.
As the first step we show that we can in fact work with measures on $E$ instead of measures on $V$.
This is more convenient for us since all our constructions from Section~\ref{sec:CountMult} are defined on edges rather than on vertices.

\begin{proposition}\label{MeasureOnG}
Let $\mathcal{G}=(V,E)$ be a Borel multi-graph of bounded maximum degree and $\mu\in \mathcal{P}(V)$.
Then $\chi'_\mu(\mathcal{G})\le \chi_{\hat\mu}(\mathcal{E})$.
\end{proposition}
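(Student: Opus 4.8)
The plan is to compare a measurable edge colouring of $\mathcal{G}$ (viewing edges as a subset $C$ of $E$ that is $\mu$-large in the vertex sense) with a measurable colouring of the line graph $\mathcal{E}=(E,\mathcal{C},I_\mathcal{G})$ that misses a $\hat\mu$-null set of edges. Concretely, suppose $\chi_{\hat\mu}(\mathcal{E})=k$, witnessed by a Borel set $D\subseteq E$ with $\hat\mu(E\setminus D)=0$ and a proper Borel vertex colouring of $\mathcal{E}\upharpoonright D$ with $k$ colours; equivalently, a proper Borel edge colouring $c;E\to[k]$ of the Borel multi-graph $\mathcal{G}$ whose domain is $D$. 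I want to produce from this a Borel set $C\subseteq E$ with $\mu(V(E\setminus C))=0$ and $\chi'_B(\mathcal{G}\upharpoonright C)\le k$, which is exactly the statement $\chi'_\mu(\mathcal{G})\le k$.

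First I would take $C:=D$. By Proposition~\ref{MeasureHat}(i) applied to $A:=E\setminus D$, which has $\hat\mu(A)=0<\epsilon$ for every $\epsilon>0$, we get $\mu(\{x\in V:(\exists e\in A)\ x\in e\})<\Delta(\mathcal{G})\,\epsilon$ for all $\epsilon>0$, hence $\mu(\{x\in V:(\exists e\in E\setminus D)\ x\in e\})=0$. This is precisely the condition that $E\setminus C$ covers only a $\mu$-null set of vertices. Next, the restriction of $c$ to $C=D$ is a full proper Borel edge colouring of $\mathcal{G}\upharpoonright C$ with $k$ colours: properness is inherited since $I_{\mathcal{G}\upharpoonright C}\subseteq I_\mathcal{G}$, Borelness is inherited from $c$, and the colouring is total on the edge set of $\mathcal{G}\upharpoonright C$ because $\dom(c)=D=C\supseteq E\cap([C]^2\times[\pi])$. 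Therefore $\chi'_B(\mathcal{G}\upharpoonright C)\le k$, and taking the minimum over all witnessing $k$ gives $\chi'_\mu(\mathcal{G})\le\chi_{\hat\mu}(\mathcal{E})$.

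There is one subtlety to be careful about: in the definition of $\chi'_\mu(\mathcal{G})$ the exceptional set is a set of \emph{edges} $E_0=E\setminus C$ whose covered vertices $V(E_0)$ form a $\mu$-null set, whereas $\chi_{\hat\mu}(\mathcal{E})$ allows deleting an arbitrary $\hat\mu$-null set of edges (i.e.\ a null set in the line-graph vertex space). The point of $\hat\mu$ and of Proposition~\ref{MeasureHat}(i) is exactly to bridge this: a $\hat\mu$-null set of edges automatically covers a $\mu$-null set of vertices, because $\hat\mu$ integrates the edge-count functions $e_A$ against $\mu$ and dominates $\tfrac{1}{\Delta(\mathcal{G})}\mu(\{x: (\exists e\in A)\ x\in e\})$. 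So no real obstacle arises; the only thing to check is that the translation between ``proper $k$-colouring of the line graph missing a null vertex set'' and ``$k$ proper Borel matchings covering all but a null set'' is the tautological one already recorded when $\chi'_B(\mathcal{G})=\chi_B(\mathcal{E})$ was observed. I expect the whole argument to be short, with the ``hard'' content entirely outsourced to Proposition~\ref{MeasureHat}(i).
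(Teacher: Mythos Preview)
Your proposal is correct and follows essentially the same approach as the paper: take a $\hat\mu$-co-null set of edges carrying a proper Borel $k$-colouring of the line graph, and use Proposition~\ref{MeasureHat}(i) with $\epsilon\to 0$ to conclude that the uncoloured edges cover a $\mu$-null set of vertices. The only differences are cosmetic (your $C$ and $D$ are swapped relative to the paper's), and there is a small notational slip where you write $\mathcal{G}\upharpoonright C$ and $E\cap([C]^2\times[\pi])$ with $C\subseteq E$; what you mean is $\mathcal{E}\upharpoonright C$, i.e.\ the line graph restricted to the edge set $C$, which is the formulation of $\chi'_\mu$ used in Section~\ref{sec:Invariant}.
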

\begin{proof}
Let $\chi_{\hat\mu}(\mathcal{E})=k$.
By the definition there is a Borel $\hat\mu$-co-null set $C\subseteq E$ and a full proper Borel (vertex) colouring $c:C\to [k]$ of $\mathcal{E}$.
We put $D:=\{x\in V:(\exists e\in E\setminus C) \ x\in e\}$.
Using Proposition~\ref{MeasureHat} we have $\mu(D)<\Delta(\mathcal{G})\epsilon$ for every $\epsilon>0$, i.e., $\mu(D)=0$, and the claim follows.
\end{proof}

Using Proposition~\ref{MeasureHat} and Proposition~\ref{MeasureOnG} we see that in order to prove that $\chi'_{\mu}(\mathcal{G})\le \Delta(\mathcal{G})+\pi(\mathcal{G})$ for every $\mathcal{G}$-invariant measure $\mu$ it is enough to show that $\chi_\nu(\mathcal{E})\le \Delta(\mathcal{G})+\pi(\mathcal{G})$ for every $\mathcal{E}$-invariant measure $\nu$.

\begin{proposition}\label{MainEstimate}
Let $\mathcal{G}=(V,E)$ be a Borel multi-graph of bounded maximum degree, $\nu\in \mathcal{I}_{E_\mathcal{E}}$ and $10(\Delta(\mathcal{G})+\pi(\mathcal{G}))^6<L\in \mathbb{N}$.
Suppose that $c;E\to [\Delta(\mathcal{G})+\pi(\mathcal{G})]$ is a partial Borel proper colouring.
\begin{enumerate}[(i), nosep]
    \item\label{it:A} If $c$ cannot be improved in $L$ steps, then
    $$\nu(U_c)\le \frac{(\Delta(\mathcal{G})+\pi(\mathcal{G}))^4}{L}.$$
    \item\label{it:B} If $c$ cannot be iteratively improved in $L$ steps, then
    $$\nu(U_c)\le \frac{(\Delta(\mathcal{G})+\pi(\mathcal{G}))^{15}}{L^2}.$$
\end{enumerate}
\end{proposition}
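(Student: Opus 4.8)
The plan is to run a mass-transport (double-counting) argument inside each of the two Borel bipartite graphs $\mathcal{H}_c$ and $\mathcal{H}_c^\leadsto$ from Section~\ref{sec:CountMult}, which are Borel subsets of $U_c\times\dom(c)\subseteq F_\mathcal{E}$ by Proposition~\ref{HisBorel}. The measure-theoretic core is the following standard identity: if $\mathcal{H}$ is a Borel bipartite graph with parts $A,B\subseteq E$ whose edge set, viewed as a set of ordered pairs, is contained in the countable Borel equivalence relation $F_\mathcal{E}$, and every vertex has at most countable degree, then for every $F_\mathcal{E}$-invariant $\nu\in\mathcal{P}(E)$
\[
  \int_A \deg_{\mathcal{H}}(e)\,d\nu(e)=\int_B \deg_{\mathcal{H}}(f)\,d\nu(f).
\]
To prove this I would invoke the Feldman--Moore theorem to fix Borel involutions $\{i_n\}_{n<\omega}$ graphing $F_\mathcal{E}$, and then write $E(\mathcal{H})=\bigsqcup_{n<\omega}\operatorname{graph}(\phi_n)$, where $\phi_n$ is the restriction of $i_n$ to the Borel set of those $e\in A$ for which $(e,i_n(e))\in E(\mathcal{H})$ but $(e,i_m(e))\notin E(\mathcal{H})$ for all $m<n$. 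Each $\phi_n;A\to B$ is then a Borel injection with $\operatorname{graph}(\phi_n)\subseteq F_\mathcal{E}$, so $\nu(\dom\phi_n)=\nu(\operatorname{rng}\phi_n)$ by $F_\mathcal{E}$-invariance of $\nu$; since $\deg_\mathcal{H}(e)=|\{n:e\in\dom\phi_n\}|$ for $e\in A$ and $\deg_\mathcal{H}(f)=|\{n:f\in\operatorname{rng}\phi_n\}|$ for $f\in B$, summing over $n$ yields the displayed equality.

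For part~\ref{it:A}, assume $c$ cannot be improved in $L$ steps and apply the identity to $\mathcal{H}=\mathcal{H}_c$ with $A=U_c$ and $B=\dom(c)$. By Proposition~\ref{EstimateSimpleVizingDown} the left-hand side is at least $L\cdot\nu(U_c)$, while by Proposition~\ref{EstimateSimpleVizing} the right-hand side is at most $(\Delta(\mathcal{G})+\pi(\mathcal{G}))^4\cdot\nu(\dom(c))\le(\Delta(\mathcal{G})+\pi(\mathcal{G}))^4$; hence $\nu(U_c)\le(\Delta(\mathcal{G})+\pi(\mathcal{G}))^4/L$ (the hypothesis $L>10(\Delta(\mathcal{G})+\pi(\mathcal{G}))^6$ is not needed here). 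For part~\ref{it:B}, assume $c$ cannot be iteratively improved in $L$ steps and apply the identity to $\mathcal{H}=\mathcal{H}_c^\leadsto$. By Proposition~\ref{EstimateVizingDown} the left-hand side is at least
\[
  \frac{L}{2\Delta(\mathcal{G})}\left(\frac{1}{3(\Delta(\mathcal{G})+\pi(\mathcal{G}))^2}\Bigl(\frac{L}{2}-\Delta(\mathcal{G})^5-1\Bigr)-2\Delta(\mathcal{G})^3\right)\cdot\nu(U_c),
\]
and by Proposition~\ref{EstimateVizing} the right-hand side is at most $(\Delta(\mathcal{G})+\pi(\mathcal{G}))^9$. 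A routine estimate using $L>10(\Delta(\mathcal{G})+\pi(\mathcal{G}))^6$ to absorb the lower-order terms $\Delta(\mathcal{G})^5+1$ and $2\Delta(\mathcal{G})^3$ into a fixed fraction of the leading term shows that the coefficient of $\nu(U_c)$ above is at least $L^2/(\Delta(\mathcal{G})+\pi(\mathcal{G}))^6$; combining with the upper bound $(\Delta(\mathcal{G})+\pi(\mathcal{G}))^9$ gives $\nu(U_c)\le(\Delta(\mathcal{G})+\pi(\mathcal{G}))^{15}/L^2$.

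The one genuinely delicate ingredient is the mass-transport identity, and in particular the reduction of $E(\mathcal{H})$ to a countable disjoint family of Borel \emph{injections} with graphs inside $F_\mathcal{E}$; this is standard (one may instead take a Lusin--Novikov functional decomposition and then refine each piece into finitely many Borel matchings via Theorem~\ref{th:KST}, using that $\mathcal{H}_c$ and $\mathcal{H}_c^\leadsto$ have bounded degree on the $\dom(c)$-side by Propositions~\ref{EstimateSimpleVizing} and~\ref{EstimateVizing}). Everything else is bookkeeping with the degree estimates already proved in Section~\ref{sec:CountMult}, so I expect no real difficulty beyond organising the constants in part~\ref{it:B}.
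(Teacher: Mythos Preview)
Your proposal is correct and follows essentially the same approach as the paper: the paper likewise applies Feldman--Moore to decompose $E(\mathcal{H}_c)$ and $E(\mathcal{H}_c^\leadsto)$ into countably many graphs of Borel injections inside $F_\mathcal{E}$, uses $\nu$-invariance to equate $\int_{U_c}\deg_{\mathcal{H}}\,d\nu$ with $\int_{\dom(c)}\deg_{\mathcal{H}}\,d\nu$, and then plugs in the degree bounds of Propositions~\ref{EstimateSimpleVizing}--\ref{EstimateVizingDown}, with the hypothesis $L>10(\Delta(\mathcal{G})+\pi(\mathcal{G}))^6$ used exactly as you say to clean up the constants in~\ref{it:B}. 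Your write-up is in fact slightly more explicit than the paper's about how to extract the injections from the involutions.
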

\begin{proof} In brief, the claimed inequalities follow from the degree bounds given by Propositions~\ref{EstimateSimpleVizing}--\ref{EstimateSimpleVizingDown} and~\ref{EstimateVizing}--\ref{EstimateVizingDown}, and the standard fact that `local double counting inequalities' apply to invariant measures.

Let us give a formal proof. Put $\Delta:=\Delta(\mathcal{G})$ and $\pi:=\pi(\mathcal{G})$.
Let $\mathcal{H}_c$ and $\mathcal{H}_c^\leadsto$ be Borel bipartite graphs assigned to $\mathcal{G}$ with respect to $c$ (see Proposition~\ref{HisBorel}).
Recall that $E(\mathcal{H}_c),E(\mathcal{H}_c^\leadsto)\subseteq (U_c\times \dom(c))\cap F_{\mathcal{E}}$.
By the Feldman-Moore Theorem (\cite[Theorem~1.3]{KechrisMiller:toe}) we find a sequence of Borel injective maps $\{i_n\}_{n<\omega}$ and $\{j_m\}_{m<\omega}$ such that 
\begin{enumerate}[(a)]
    \item $\dom(i_n),\dom(j_m)\subseteq U_c$ for every $n,m\in \mathbb{N}$,
    \item $\operatorname{rng}(i_n), \operatorname{rng}(j_m)\subseteq \dom(c)$ for every $n,m\in \mathbb{N}$,
    \item $(e,i_n(e))\in \mathcal{H}_c$ and $(e',j_m(e'))\in \mathcal{H}_c^\leadsto$ for every $n,m\in \mathbb{N}$ and $e\in \dom(i_n)$, $e'\in \dom(j_m)$,
    \item for every $(e,f)\in \mathcal{H}_c$ and $(e',f')\in \mathcal{H}_c^\leadsto$ there are unique $n,m\in \mathbb{N}$ such that $i_n(e)=f$ and $j_m(e')=f'$.
\end{enumerate}
The assumption that $\nu\in \mathcal{P}(E)$ is $\mathcal{E}$-invariant gives
$$\int_{U_c}\deg_{\mathcal{H}_c}(e)d\nu=\sum_{n<\omega}\nu(\dom(i_n))=$$
$$=\sum_{n<\omega}\nu(\operatorname{rng}(i_n))=\int_{\dom(c)}\deg_{\mathcal{H}_c}(f)d\nu$$
and
$$\int_{U_c}\deg_{\mathcal{H}^\leadsto_c}(e)d\nu=\sum_{m<\omega}\nu(\dom(j_m))=$$
$$=\sum_{m<\omega}\nu(\operatorname{rng}(j_m))=\int_{\dom(c)}\deg_{\mathcal{H}^\leadsto_c}(f)d\nu$$

Now we are ready to derive both parts of the proposition.

\ref{it:A}:
Using Proposition~\ref{EstimateSimpleVizing} and Proposition~\ref{EstimateSimpleVizingDown} we obtain that
$$L\nu(U_c)\le (\Delta+\pi)^4.$$

\ref{it:B}:
Using Proposition~\ref{EstimateVizing} and Proposition~\ref{EstimateVizingDown} we obtain that
$$\left(\frac{L}{2\Delta}\left(\frac{1}{3(\Delta+\pi)^2}\left(\frac{L}{2}-\Delta^5-1\right)-2\Delta^3\right)\right)\nu(U_c)\le (\Delta+\pi)^9.$$
The assumption that $10(\Delta+\pi)^6<L$ together with a simple computation gives the desired estimates.
\end{proof}

Let $\mathcal{G}=(V,E)$ be a Borel multi-graph of bounded maximum degree and put $\Delta:=\Delta(\mathcal{G})$ and $\pi:=\pi(\mathcal{G})$.
Next we describe how to construct,  given an integer $L\in \mathbb{N}$ with $L>2\Delta$ and a proper partial Borel colouring $c;E\to [\Delta+\pi]$, a proper partial Borel colouring $d;E\to [\Delta+\pi]$ such that 
\begin{enumerate}[(i), nosep]
	\item $U_c\supseteq U_d$,
	\item $\nu(\{e\in E:c(e)\not=d(e)\})\le 3L\nu(U_c)$,
	\item $d$ cannot be iteratively improved in $L$ steps. 
\end{enumerate}
We need for this construction $\nu$ to be $\mathcal{E}$-invariant. The construction is fairly standard (see, for example, Elek and Lippner~\cite{ElekLippner10}). Informally speaking, we split all potential augmenting chains of length at most $3L$ into Borel sets, each consisting of vertex-disjoint chains that can be augmented independently of each other; then we take these sets one by one, so that each appears infinitely often, iteratively augmenting all currently possible chains in each set.

Formally, we fix a sequence $\{A_n\}_{n<\omega}$ of Borel subsets of $U_c$ such that each $A_n$ is $6L$-independent in $\mathcal{E}$ and for every $e\in U_c$ there are infinitely many indices $n<\omega$ such that $e\in A_n$.
This is possible by Theorem~\ref{th:KST} since $\mathcal{E}$ has bounded maximum degree. 

We build inductively proper Borel partial colourings $c_n;E\to [\Delta+\pi]$ with the property that $U_{c_n}\supseteq U_{c_{n+1}}$ and then put 
$$d(e):=\lim_{n\to\infty}c_n(e)$$
where the limit is in the discrete finite space $[\Delta+\pi]$, i.e., the limit exists if and only if $\{c_n(e)\}_{n<\omega}$ stabilizes after finitely many steps.
We show that if $e\in \dom(c_m)$ for some $m\in \mathbb{N}$, then $\lim_{n\to\omega}c_n(e)$ exists.
This guarantees that $d$ is a proper Borel partial colouring and $U_c\supseteq U_d$.

Suppose that $c_n$ is defined.
Consider the set $C_{n+1}\subseteq A_{n+1}$ of those $e\in A_{n+1}$ such that there exists $x\in e$ for which $|P_c(x,e)|<L$ or for which there is a superb edge $f\in P_{c_n}(x,e)$ such that $|P_c(x,e)_{i(f)}|\le L$ and $|P_{c_n}(x,e\leadsto f)|\le L$.
The set $C_{n+1}$ is clearly Borel and moreover we can pick such a vertex $x\in e$ and if necessary $f\in P_c(x,e)$ in a Borel way.
Denote them as $x_e$ and if necessary $f_e$.
Define $Q(e)$ to be $V_c(x_e,e)$ in the former and $W_c(x_e,e\leadsto f_e)$ in the latter case.
Then the assignment $e\mapsto Q(e)$ is Borel.
The argument is similar to the one in Proposition~\ref{HisBorel}.
By our assumptions on the independence of $A_{n+1}$ we see that $Q(e_0)$ and $Q(e_1)$ are vertex disjoint for different $e_0,e_1\in C_{n+1}$, i.e., there is no $y\in V$ that is used by both paths simultaneously.
The shift of $c_n$ along all $\{Q(e)\}_{e\in C_{n+1}}$ simultaneously yields a proper Borel partial colouring $c_{n+1}'$.
Because every $Q(e)$ was $c_n$-augmenting we can extend $c_{n+1}'$ even further to a proper Borel partial colouring $c_{n+1}$ such that $U_{c_n}\supseteq U_{c_{n+1}}$.
%One can easily verify that the simultaneous shift and extension to $c_{n+1}$ can be made in a Borel fashion, i.e., the resulting proper partial colouring $c_{n+1}$ is Borel.

Note that for every $g\in E$ there are at most $\Delta^{3L}$ possible edges $e\in U_c$ that can cause a modification of a colour of $g$ during our construction.
Because $U_{c_n}\supseteq U_{c_{n+1}}$ we see that the colour of every $g\in E$ eventually stabilizes and therefore $d;E\to [\Delta+\pi]$ is well-defined.

Note also that if $e\in C_{n}$ for some $n<\omega$, then $e\in \dom(c_{l})$ for every $n<l<\omega$ and that the number of colours that were changed when we modified according to $e$ is at most $2\Delta+2L\le 3L$.
The invariance of $\nu$ then implies that the set of edges that changed colour during
our construction has measure at most~$3L\nu(U_c)$.

Let $e\in U_{d}$.
It follows from the construction that its $3L$-neighbourhood stabilizes after finitely many steps of our induction, say $k$ steps.
Let $k<n$ be such that $e\in A_n$.
Then $e\not\in C_n$ since $e\in U_d$.
This implies that $d$ cannot be iteratively improved in $L$ steps.

\begin{theorem}\label{MainInv}
Let $\mathcal{G}=(V,E)$ be a Borel multi-graph of bounded maximum degree and $\nu\in \mathcal{I}_{F_{\mathcal{E}}}$.
Then 
$$\chi_\nu(\mathcal{E})\le \Delta(\mathcal{G})+\pi(\mathcal{G}).$$
\end{theorem}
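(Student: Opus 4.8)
The plan is to iterate the colour-improvement construction described just before the theorem, feeding it a sequence of parameters $L_n\to\infty$, and to take a limit of the resulting colourings. First I would fix $\nu\in\mathcal I_{F_{\mathcal E}}$ and start from the empty partial colouring $c^{(0)}$ (which is trivially proper and Borel). Choose a rapidly increasing sequence of integers $L_n$ with $L_n>10(\Delta+\pi)^6$ and, say, $\sum_n 3L_n\,(\Delta+\pi)^{15}/L_{n-1}^2<\infty$; for instance $L_n$ roughly doubly exponential in $n$ works. Given a proper partial Borel colouring $c^{(n)}$, apply the construction preceding the theorem with parameter $L_{n+1}$ to obtain a proper partial Borel colouring $c^{(n+1)}$ with $U_{c^{(n)}}\supseteq U_{c^{(n+1)}}$, with $\nu(\{e:c^{(n)}(e)\neq c^{(n+1)}(e)\})\le 3L_{n+1}\,\nu(U_{c^{(n)}})$, and such that $c^{(n+1)}$ cannot be iteratively improved in $L_{n+1}$ steps.

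Next I would bound $\nu(U_{c^{(n)}})$. Since $c^{(n)}$ cannot be iteratively improved in $L_n$ steps, Proposition~\ref{MainEstimate}\ref{it:B} gives $\nu(U_{c^{(n)}})\le (\Delta+\pi)^{15}/L_n^2$, which tends to $0$. Consequently the ``mass moved at step $n+1$'' is at most $3L_{n+1}(\Delta+\pi)^{15}/L_n^2$, and by the choice of the $L_n$ this is summable. Hence, by Borel--Cantelli, for $\nu$-almost every edge $e$ the colour $c^{(n)}(e)$ changes only finitely often, so $c(e):=\lim_{n\to\infty}c^{(n)}(e)$ exists $\nu$-a.e.; on the (Borel, $\nu$-null) exceptional set one defines $c$ arbitrarily. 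The pointwise limit of proper colourings is proper wherever it is defined (properness is a condition on pairs of incident edges, each of which has eventually constant colour on the set where the limit exists), and $c$ is Borel as a pointwise limit of Borel functions. Moreover $\dom(c)\supseteq\bigcup_n\dom(c^{(n)})=E\setminus\bigcap_n U_{c^{(n)}}$, and $\nu\big(\bigcap_n U_{c^{(n)}}\big)\le\inf_n\nu(U_{c^{(n)}})=0$; so $c$ is a full proper Borel colouring of $\mathcal E$ off a $\nu$-null set, using only $\Delta+\pi$ colours. Restricting to the $\nu$-conull Borel set where $c$ is defined and proper witnesses $\chi_\nu(\mathcal E)\le\Delta(\mathcal G)+\pi(\mathcal G)$.

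The main obstacle is controlling the iteration so that colours actually stabilise $\nu$-almost everywhere: the improvement construction can in principle recolour an edge at every stage, so one needs the quantitative gain from the ``iterated Vizing's chain'' (the quadratic bound $\nu(U_c)\le (\Delta+\pi)^{15}/L^2$ of Proposition~\ref{MainEstimate}\ref{it:B}) to beat the linear-in-$L$ cost of one improvement round, and then to choose $L_n$ growing fast enough that the total recolouring mass is summable. A secondary technical point is checking that each application of the pre-theorem construction is legitimate, i.e.\ that at stage $n$ the colouring $c^{(n)}$ is indeed proper, Borel, and satisfies $L_{n+1}>2\Delta$, all of which are immediate from the inductive hypothesis and the stated properties of the construction. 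Everything else — Borelness of the limit, measure-zero bookkeeping, properness being preserved under pointwise limits — is routine once the summability is in place.
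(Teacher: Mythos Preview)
Your proposal is correct and essentially identical to the paper's proof: the paper also iterates the pre-theorem construction, applies Proposition~\ref{MainEstimate}\ref{it:B} to bound $\nu(U_{d_n})$, checks summability of the recolouring mass, and invokes Borel--Cantelli to take a pointwise limit. The only cosmetic difference is that the paper simply takes $L_n=2^n$ (which already satisfies your summability condition, since $L_{n+1}/L_n^{\,2}=2^{1-n}$), so a doubly exponential sequence is unnecessary.
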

\begin{proof}
Let us  inductively define a sequence of proper Borel partial colourings $\{d_n\}_{n<\omega}$ such that 
\begin{enumerate}[(i)]
    \item $d_n$ cannot be iteratively improved in $2^n$ steps,
    \item $U_{d_{n}}\supseteq U_{d_{n+1}}$,
    \item $\nu(\{e\in E:d_n(e)\not=d_{n+1}(e)\})<2^{n+3}\nu(U_{d_n})$.
\end{enumerate}
We can build such a sequence inductively as described after Proposition~\ref{MainEstimate}.
We define 
$$d:=\lim_{n\to\infty} d_n.$$
The limit exists for $\nu$-almost every $e\in E$ because by the third inductive property and Proposition~\ref{MainEstimate}
$$\sum_{n<\omega}2^{n+3}\nu(U_{d_n})\le \sum_{n<\omega}2^{n+3}\frac{(\Delta(\mathcal{G})+\pi(\mathcal{G}))^{15}}{2^{2n}}<\infty$$
and by the Borel-Cantelli Lemma the measure of edges that changes the colour infinitely often is $0$.
\end{proof}

\begin{proof}[Proof of Theorem~\ref{th:main}]
Use Theorem~\ref{MainInv} together with the comment before Proposition~\ref{MainEstimate}.
\end{proof}

\begin{proof}[Proof of Theorem~\ref{th:k}.] The upper bound $k'(d)\le d+1$ follows from Theorem~\ref{th:main} and the observation that a Borel matching naturally encodes a Borel involution.

%First, let us show Let us sketch the proof of the claimed upper bound 
Let us show that $k(d)\le k$, where $k:=\lceil(d+2)/2\rceil$. Let $\GG=(V,\mathcal B,E,\mu)$ be any graphing of maximum degree $d$. Let $E=E_0\cup\ldots\cup E_{d+1}$ be the Borel partition of $E$ returned by  Theorem~\ref{th:main} where $E_0$ consists of uncoloured edges and satisfies $\mu(V(E_0))=0$. By uncolouring each connectivity component containing at least one edge from $E_0$, we can additionally assume by the invariance of the measure $\mu$ that both $V(E_0)$ and $W:=V\setminus V(E_0)$
are \emph{$\mathcal{G}$-invariant} sets (that is, each is equal to its $F_\mathcal{G}$-closure).

%By the $\mathcal{G}$-invariance of $\mu$, we can find an Borel co-null set $W\subseteq V$ such that $W$ is invariant (that is, $W=[W]_{F_\mathcal{G}}$) and 
%$V(E_0)$ (resp.\ $V(E_i)$ for every $i\in [d+1]$) is a subset of $V\setminus W$ (resp.\ $W$). 
 
Consider the Borel graph $\mathcal{G}\upharpoonright W$. 
%Its edge-set is perfectly partitioned into matchings $E_1,\dots, E_{d+1}$.
The proof of Theorem 8.3 in~\cite{CsokaLippnerPikhurko16} shows how to partition its edge set (which is the union of the $d+1$ Borel matchings $E_1,\dots, E_{d+1}$) into $k$ Borel subgraphs whose connectivity components are finite cycles and finite paths, apart from a $\mathcal{G}$-invariant $\mu$-co-null set $U\subseteq W$.
Then it is easy to choose in a Borel way one of the two possible orientations of each finite cycle or finite path, thus obtaining $k$ directed Borel graphs of out-degree at most $1$ on $U$ each naturally encoding a partial Borel bijection.

It remains to find the required functions for $\mathcal{G}\upharpoonright (V(E_0)\cup U)$. Since $V(E_0)\cup U$ has measure 0, we can use the Axiom of Choice to first choose a $(d+1)$-edge-colouring and then orient each path and cycle obtained by grouping all colours into $\lceil(d+1)/2\rceil\le k$ pairs and singletons, with each group corresponding to one function.\end{proof}

\begin{remark}\label{re:lower}\rm For the sake of completeness, let us briefly present the lower bounds.  The bound $k'(d)\ge d+1$ for $d\ge 2$ follows by taking a finite graph $G=(V,E)$ with $\chi'(G)=\Delta(G)+1$ and $\Delta(G)=d$, and considering the graphing $(V,2^V,E,\mu)$ where $\mu$ is the uniform measure on $V$. The bound $k(d)\ge d/2$ follows by considering any graphing $\GG$ with positive measure of vertices having  degree $\Delta(\GG)=d$. For even $d\ge 2$, it can be improved to $d/2+1$ by taking the constructions of Laczkovich \cite{Laczkovich88} for $d=2$ and Conley and Kechris~\cite[Section~6]{ConleyKechris13} for even $d\ge 4$ of a bipartite $d$-regular graphing $\GG$ such
that every Borel matching misses a set of vertices of positive measure. 
%Also, note that trivially  $k(1)=k'(1)=1$.
\end{remark}

%A direct consequence of the
\hide{
The stated upper bound on $k(d)$ can also be derived from a result by Thornton~\cite[Corollary 2.6]{Thornton20arxiv}. In the opposite direction, Theorem~\ref{th:k} gives that the edges of every graphing of maximum degree at most $d$ can be oriented in a measurable way so that each out-degree is at most $\lceil (d+2)/2\rceil$.
%, which implies~\cite[Corollary 2.6]{Thornton20arxiv} in the special case of $d$-regular graphings.
}

\section{Approximate edge colouring}\label{sec:QuasiInvariant}

Let $\mathcal{G}=(V,E)$ be a Borel multi-graph of bounded maximum degree and $\mu \in \mathcal{P}(V)$.
Recall that
$\chi_{AP,\mu}(\mathcal{G})$
is the smallest $k\in \mathbb{N}$ such that for every $\epsilon>0$ there is a Borel set $C\subseteq V$ such that $\mu(C)>1-\epsilon$ and $\chi_B(\mathcal{G}\upharpoonright C)\le k$.
We also define
$$\chi_{AP}(\mathcal{G}):=\sup\{\chi_{AP,\mu}(\mathcal{G}):\mu\in P(V)\}.$$
Similarly we define the approximate measurable chromatic indices $\chi'_{AP,\mu}(\mathcal{G})$ and $\chi'_{AP}(\mathcal{G})$ (see the Introduction).

Recall that we write $\mathcal{QI}_{F}$ for the set of all $F$-quasi-invariant measures (where $(X,F)$ is a countable Borel equivalence relation, for example $(V,F_\mathcal{G})$).
It follows from Proposition~\ref{BasicQuasi} that
$$\chi'_{AP}(\mathcal{G})=\sup\{\chi'_{AP,\mu}:\mu\in \mathcal{QI}_{F_\mathcal{G}}\}$$
and similarly for $\chi_{AP}(\mathcal{G})$.
Namely take $\mu\in \mathcal{P}(V)$ and consider $\tilde\mu$.
Suppose that we have $\chi'_{AP,\tilde\mu}(\mathcal{G})=k$.
Then for every $\epsilon>0$ we can find a Borel set $C\subseteq V$ such that $\tilde\mu(C)>1-\frac{\epsilon}{2}$ and $\chi'_B(\mathcal{G}\upharpoonright C)\le k$.
By Proposition~\ref{BasicQuasi} we have $\mu(V\setminus C)\le 2\tilde\mu(V\setminus C)\le \epsilon$.
This implies that $\chi_{AP,\mu}(\mathcal{G})\le k$ and we are done.

As in the invariant case, it is more convenient for us to work with measures on $E$ instead of measures on $X$.

\begin{proposition}\label{pr:AppMeasureOnEdges}
Let $\mathcal{G}=(V,E)$ be a Borel multi-graph of bounded maximum degree and $\mu\in \mathcal{P}(V)$.
Then
$$\chi'_{AP,\mu}(\mathcal{G})\le \chi_{AP,\hat\mu}(\mathcal{E}).$$
\end{proposition}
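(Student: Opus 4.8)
The plan is to mirror the proof of Proposition~\ref{MeasureOnG} essentially verbatim, since the approximate setting only introduces an extra $\epsilon$ that can be absorbed into the error term. First I would set $k:=\chi_{AP,\hat\mu}(\mathcal{E})$ and fix an arbitrary $\epsilon>0$; the goal is to produce a Borel set $C'\subseteq V$ with $\mu(V\setminus C')\le\epsilon$ and $\chi'_B(\mathcal{G}\upharpoonright C')\le k$. By definition of $\chi_{AP,\hat\mu}(\mathcal{E})$ applied to the parameter $\epsilon/\Delta(\mathcal{G})$, there is a Borel set $C\subseteq E$ with $\hat\mu(E\setminus C)<\epsilon/\Delta(\mathcal{G})$ and a full proper Borel vertex colouring $c:C\to[k]$ of $\mathcal{E}\upharpoonright C$; equivalently, $c$ is a proper Borel edge colouring of the sub-multigraph of $\mathcal{G}$ carried by the edge set $C$, using $k$ colours.

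Next I would translate the ``small edge set'' bound into a ``small vertex set'' bound using Proposition~\ref{MeasureHat}(i): with $A:=E\setminus C$, the first item of that proposition gives $\mu(\{x\in V:(\exists e\in A)\ x\in e\})<\Delta(\mathcal{G})\cdot\hat\mu(A)<\epsilon$. So set $D:=\{x\in V:(\exists e\in E\setminus C)\ x\in e\}$ and $C':=V\setminus D$; this is Borel and $\mu(V\setminus C')=\mu(D)<\epsilon$. The point is that every edge of $\mathcal{G}\upharpoonright C'$, i.e.\ every $e\in E$ with both endpoints in $C'$, must lie in $C$ — otherwise an endpoint would be in $D$ — so $E\cap([C']^2\times[\pi(\mathcal{G})])\subseteq C$, and therefore the restriction of $c$ to this edge set is a proper Borel edge colouring of $\mathcal{G}\upharpoonright C'$ with $k$ colours. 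Hence $\chi'_B(\mathcal{G}\upharpoonright C')\le k$.

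Since $\epsilon>0$ was arbitrary, this shows $\chi'_{AP,\mu}(\mathcal{G})\le k=\chi_{AP,\hat\mu}(\mathcal{E})$, completing the proof. There is no real obstacle here: the only thing to be slightly careful about is the bookkeeping of the constant $\Delta(\mathcal{G})$ (choosing the threshold $\epsilon/\Delta(\mathcal{G})$ rather than $\epsilon$ when invoking the definition of the approximate chromatic index for $\mathcal{E}$), and the routine verification that restricting a proper Borel edge colouring to a Borel subgraph stays Borel and proper. Everything else is a direct combination of the definitions of $\chi'_{AP,\mu}$ and $\chi_{AP,\hat\mu}$ with Proposition~\ref{MeasureHat}.
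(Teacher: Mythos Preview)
Your proposal is correct and follows essentially the same argument as the paper: both use Proposition~\ref{MeasureHat}(i) to pass from a small-$\hat\mu$ edge set to a small-$\mu$ vertex set $D$, then observe that all edges of $\mathcal{G}\upharpoonright(V\setminus D)$ lie in the coloured set so the restriction of $c$ works. The only cosmetic difference is that you preemptively shrink the threshold to $\epsilon/\Delta(\mathcal{G})$ to land exactly at $\mu(D)<\epsilon$, whereas the paper allows $\mu(D)<\Delta(\mathcal{G})\epsilon$ and relies on $\epsilon$ being arbitrary.
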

\begin{proof}
Let $k\in \mathbb{N}$ be such that $\chi_{AP,\hat\mu}(\mathcal{E})=k$.
Pick $\epsilon>0$.
By definition there is $C\subseteq E$ such that $\hat\mu(C)<\epsilon$ and a full proper Borel colouring $c:E\setminus C\to [k]$ of $\mathcal{E}\upharpoonright (E\setminus C)$.
Put $D:=\{x\in V:(\exists e\in C) \ x\in e\}$.
Using Proposition~\ref{MeasureHat} we have $\mu(D)<\Delta(\mathcal{G})\epsilon$.
Note that $E\cap [V\setminus D]^2 \subseteq E\setminus C$.
It follows that we can restrict $c$ to get a full proper Borel edge colouring $d:E\cap [V\setminus D]^2\to [k]$.
This implies that $\chi'_{AP,\mu}(\mathcal{G})\le k$.
\end{proof}

It follows from previous observations that in order to show that 
$$\chi'_{AP}(\mathcal{G})\le \Delta(\mathcal{G})+\pi(\mathcal{G})$$
it is enough to prove the following statement.

\begin{theorem}\label{QuasiVizing}
Let $\mathcal{G}=(V,E)$ be a Borel multi-graph of bounded maximum degree and $\nu\in \mathcal{QI}_{F_{\mathcal{E}}}$.
Then 
$$\chi_{AP,\nu}(\mathcal{E})\le \Delta(\mathcal{G})+\pi(\mathcal{G}).$$
\end{theorem}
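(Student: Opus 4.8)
The plan is to reduce the quasi-invariant case to the invariant case that was already handled in Theorem~\ref{MainInv}, but locally, on a large-measure piece. Fix $\nu\in\mathcal{QI}_{F_{\mathcal E}}$ and $\epsilon>0$; set $\Delta:=\Delta(\mathcal G)$ and $\pi:=\pi(\mathcal G)$. The key point is that the double-counting arguments of Subsection~\ref{Estimates} only used $\nu$-invariance to conclude $\int_{U_c}\deg_{\mathcal H_c}\,d\nu=\int_{\dom(c)}\deg_{\mathcal H_c}\,d\nu$ (and similarly for $\mathcal H_c^{\leadsto}$), via the Feldman--Moore decomposition into Borel injections. For a merely quasi-invariant measure this equality fails, but if we restrict attention to chains that stay inside a Borel set $A$ which is ``almost invariant'' in the sense that few edges have a $3L$-neighbourhood leaving $A$, then the boundary error is controllable. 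So the first step is: given the target $L$ (chosen at the end as a function of $\Delta+\pi$ and $\epsilon$), produce a Borel set $A\subseteq E$ with $\nu(E\setminus A)<\epsilon/2$ and with the property that the $\nu$-measure of edges whose radius-$(3L+2)$ ball in $\mathcal E$ meets $E\setminus A$ is still small, say $<\epsilon$. This is a standard exhaustion/compactness argument: write $E=\bigcup_n A_n$ with $A_n\nearrow E$, note $\nu(\partial_{3L+2}A_n)\to 0$ because $\mathcal E$ has bounded degree and $\nu$ is a probability measure, and pick $A:=A_n$ for $n$ large.

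The second step is to run the \emph{same} iterative augmentation construction as in Section~\ref{sec:Invariant} (the one described after Proposition~\ref{MainEstimate}), but performed entirely on $\mathcal E\upharpoonright A$: we only ever augment along Vizing and iterated Vizing chains that are contained in $A$ and whose vertices lie in the ``interior'' of $A$. Since all our chains have length at most $3L$, every such augmentation is determined by a bounded neighbourhood, so Borelness is preserved exactly as in Proposition~\ref{HisBorel}. We obtain a proper Borel partial colouring $d$ on $A$ such that $d$ cannot be iteratively improved in $L$ steps \emph{when we restrict to the interior of $A$}; equivalently, every $e\in U_d$ that lies deep enough inside $A$ has $|P_d(x,e)|\ge L$ and every superb $f$ on the first $L$ edges has $|P_d(x,e\leadsto f)|\ge L$. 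The remaining uncoloured edges are of two kinds: those in the interior of $A$ (which we must bound in measure) and those near $\partial A$ or outside $A$ (which have measure $<\epsilon$ by construction, and can simply be uncoloured together with their components or left out of the final co-large set).

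The third step is the measure estimate for the interior uncoloured edges. Here one re-runs the double counting of Propositions~\ref{EstimateSimpleVizing}--\ref{EstimateVizingDown}, but now inside $\mathcal H_d^{\leadsto}\upharpoonright A$, and replaces the exact equality of the two ``sides'' (which required invariance) by the inequality
$$\int_{U_d\cap \operatorname{int}A}\deg_{\mathcal H_d^{\leadsto}}(e)\,d\nu \;\le\; \int_{\dom(d)}\deg_{\mathcal H_d^{\leadsto}}(f)\,d\nu \;+\;(\text{boundary term}),$$
where the boundary term is bounded, using $\deg_{\mathcal H_d^{\leadsto}}(f)\le(\Delta+\pi)^9$, by $(\Delta+\pi)^9$ times the $\nu$-measure of edges with a chain reaching $\partial A$, which is $<(\Delta+\pi)^9\epsilon$. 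Combining with the lower bound $\deg_{\mathcal H_d^{\leadsto}}(e)\ge \tfrac{L}{2\Delta}\big(\tfrac1{3(\Delta+\pi)^2}(\tfrac L2-\Delta^5-1)-2\Delta^3\big)$ from Proposition~\ref{EstimateVizingDown}, valid for $L>10(\Delta+\pi)^6$, we get $\nu(U_d\cap\operatorname{int}A)\le (\Delta+\pi)^{15}/L^2 + O((\Delta+\pi)^{9}\epsilon/L)$. Choosing first $L$ large in terms of $\Delta+\pi$ and $1/\epsilon$, and then $A$ accordingly, makes $\nu(U_d)<\epsilon$ in total. Deleting from $E$ the uncoloured edges (and, to keep the complement a genuine restriction $\mathcal E\upharpoonright C$, all their incident edges, which costs only a further bounded factor in measure) yields a Borel co-$\epsilon$-large set $C$ with $\chi_B(\mathcal E\upharpoonright C)\le\Delta+\pi$.

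The main obstacle is the second step: unlike the invariant case, the augmentation procedure cannot be allowed to ``push'' a chain across the boundary of $A$ (that would spoil Borelness of the restriction to $A$ and could create an infinite regress of recolourings leaking out of $A$), so one must be careful that ``cannot be iteratively improved in $L$ steps'' is established only for interior edges, and then separately argue that the non-interior uncoloured edges are few — which is exactly what the choice of $A$ with small $(3L{+}2)$-boundary buys us. Everything else is a routine adaptation of Section~\ref{sec:Invariant}, with equalities replaced by inequalities-up-to-$\epsilon$.
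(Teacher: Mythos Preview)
Your approach has a genuine gap in step three. The claimed inequality
\[
\int_{U_d\cap \operatorname{int}A}\deg_{\mathcal H_d^{\leadsto}}(e)\,d\nu \;\le\; \int_{\dom(d)}\deg_{\mathcal H_d^{\leadsto}}(f)\,d\nu \;+\;(\text{boundary term})
\]
does not follow from anything you have set up. The failure of the double-counting identity for a quasi-invariant measure has nothing to do with chains crossing the boundary of $A$: it comes from the fact that the Feldman--Moore injections $i_n:\dom(i_n)\to\operatorname{rng}(i_n)$ need not preserve $\nu$. Even if every chain in sight lies entirely in the interior of $A$, there is no reason for $\sum_n\nu(\dom(i_n))$ and $\sum_n\nu(\operatorname{rng}(i_n))$ to be close. (Think of an irrational rotation equipped with a density that is far from constant: all orbits are infinite, there is no boundary at all, yet the double count fails badly.) Choosing $A$ with thin boundary buys you nothing here; you would need something like hyperfiniteness or a F{\o}lner condition for $\nu$, which is not assumed.

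The paper takes an entirely different route: it uses the Radon--Nikodym cocycle $\rho$ of $\nu$ and replaces the condition ``$|V_c(x,e)|\ge L$'' by the weighted condition $\sum_{f\in V_c(x,e)\setminus\{e\}}\rho(f,e)\ge L$ (Definition~\ref{QuasiImprov}). A transfinite augmentation argument (Proposition~\ref{NoImprovementQuasi}) produces a partial colouring with no weight-$L$ improvement, and then the cocycle identity $\nu(\phi(B))=\int_B\rho(\phi(x),x)\,d\nu$ makes the double count exact rather than approximate, yielding $L\,\nu(U_c)\le(\Delta+\pi)^4$. Note also that only the \emph{simple} Vizing chain $\mathcal H_c$ is needed here, not $\mathcal H_c^{\leadsto}$: for the approximate statement one does not need the quadratic gain, since $L$ may be taken arbitrarily large for each given~$\epsilon$.
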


We need to introduce a fundamental tool for dealing with quasi-invariant measures.
Let $(X,F)$ be a countable Borel equivalence relation and $\mu\in \mathcal{Q}\mathcal{I}_{F}$.
A \emph{cocycle} for $\mu$ is a Borel function $\rho_\mu:F\to (0,\infty)$ such that 
$$\mu(\phi(A))=\int_{A}\rho_\mu(\phi(x),x)d\mu$$
for every Borel $A\subseteq X$ and a Borel injection $\phi:A\to X$ such that $(\phi(x),x)\in F$ for every $x\in A$.
It exists for every quasi-invariant Borel measure (see, for example,~\cite[Proposition~8.3]{KechrisMiller:toe}).
We think of $\rho_\mu(x,y)$ as the ratio of the ``density'' at $x$ to that at $y$.
%$\frac{\operatorname{mass}(x)}{\operatorname{mass}(y)}$.
Since we only work with one fixed measure we omit the subscript and write simply $\rho$ for the corresponding cocycle.

First we need to adapt Definition~\ref{SimpleImprovement} to  quasi-invariant measures.

\begin{definition}\label{QuasiImprov}
Let $c;E\to [\Delta(\mathcal{G})+\pi(\mathcal{G})]$ be a proper partial colouring, $\nu\in \mathcal{QI}_{F_{\mathcal{E}}}$ and $\rho$ the corresponding cocycle.
We say that $c$ does not \emph{admit improvement of weight $L\in \mathbb{N}$ (with respect to $\nu$)} if for $\nu$-almost every $e\in U_c$ it holds that
$$\sum_{f\in V_c(x,e)\setminus\{e\}}\rho(f,e)\ge L,\quad\mbox{for every $x\in e$.}
$$

\end{definition}

Note that this modification allows to make the same estimates as in the invariant case.
More concretely, in the case when $\nu\in \mathcal{P}(E)$ is $\mathcal{E}$-invariant measure our aim is to compare the measure of some edges that are not coloured with the measure of edges that are members of the corresponding Vizing chains.
For example, if $A\subseteq U_c$ is Borel and $w:A\to V$ is a Borel map such that $w(e)\in e$, the Vizing chains $V_c(w(e),e)$ are pairwise vertex disjoint and $L\le |V_c(w(e),e)\setminus\{e\}|$ for each $e\in A$, then we have simply by invariance
$$L\nu(A)\le \int_A|V_c(w(e),e)\setminus\{e\}|d\nu =\nu(\left\{f\in \dom(c):(\exists e\in A) \ f\in V_c(w(e),e)\right\}).$$
The Definition~\ref{QuasiImprov} allows to compute the same estimates under the assumption that $\nu\in \mathcal{P}(E)$ is $\mathcal{E}$-quasi-invariant.
Namely suppose that $c;E\to [\Delta(\mathcal{G})+\pi(\mathcal{G})]$ does not admit improvement of weight $L\in \mathbb{N}$, $A\subseteq U_c$ and $w:A\to V$ are as above.
Then we have
$$L\nu(A)\le \int_A \left(\sum_{f\in V_c(w(e),e)\setminus\{e\}}\rho(f,e) \right)d\nu=\nu(\left\{f\in \dom(c):(\exists e\in A) \ f\in V_c(w(e),e)\right\}).$$

\begin{proposition}\label{NoImprovementQuasi}
Let $\mathcal{G}=(V,E)$ be a Borel multi-graph of bounded maximum degree, $L\in \mathbb{N}$ and $\nu\in \mathcal{Q}\mathcal{I}_{F_{\mathcal{E}}}$.
Then there is a proper Borel partial colouring $c;E\to [\Delta(\mathcal{G})+\pi(\mathcal{G})]$ that does not admit improvement of weight $L$ (with respect to $\nu$).
\end{proposition}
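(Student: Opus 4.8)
The plan is to run the iterative augmentation procedure from the invariant case (the construction preceding Theorem~\ref{MainInv}) in the quasi-invariant setting, systematically replacing edge counts by $\rho$-weighted sums, where $\rho=\rho_\nu\colon F_{\mathcal E}\to(0,\infty)$ is a fixed cocycle for $\nu$ (which exists by the standard fact recalled above, see \cite[Proposition~8.3]{KechrisMiller:toe}). Put $\Delta:=\Delta(\mathcal G)$ and $\pi:=\pi(\mathcal G)$. Starting from the empty colouring $c_0$, I would build a sequence of proper Borel partial colourings $c_n\colon E\to[\Delta+\pi]$ with $U_{c_n}\supseteq U_{c_{n+1}}$: at stage $n$, fix a Borel set $A_{n+1}$ of edges, let $C_{n+1}$ be the Borel set of those still-uncoloured $e\in A_{n+1}$ for which some $x\in e$ satisfies $\sum_{f\in V_{c_n}(x,e)\setminus\{e\}}\rho(f,e)<L$, pick such an $x=x_e$ in a Borel way (cf.\ Proposition~\ref{HisBorel}), and shift $c_n$ simultaneously along the Vizing chains $Q(e):=V_{c_n}(x_e,e)$, $e\in C_{n+1}$, extending afterwards so as to colour each finite chain's last edge (as in Subsection~\ref{VizingChain}); each $Q(e)$ is $c_n$-augmenting by Proposition~\ref{VizingPath}, so $c_{n+1}$ is again a proper Borel partial colouring and every $e\in C_{n+1}$ becomes coloured. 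The output is $d:=\lim_n c_n$.

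The new difficulty, absent in the invariant case (where every ``improvable'' chain in the sense of Definition~\ref{SimpleImprovement} has at most $3L$ edges), is that a Vizing chain of $\rho$-weight $<L$ may have large, or even infinite, cardinality, so a fixed level of $\mathcal E$-independence for $A_{n+1}$ no longer forces the shifted chains to be pairwise vertex-disjoint. For finite chains I would additionally impose on $C_{n+1}$ a cardinality cutoff $|V_{c_n}(x_e,e)|\le N_{n+1}$, let $N_n\to\infty$, and choose $A_{n+1}$ to be $k_{n+1}$-independent in $\mathcal E$ with $k_{n+1}\to\infty$ fast enough that the chains of $C_{n+1}$ are then vertex-disjoint; such a sequence $(A_n)$ with moreover each edge lying in infinitely many $A_n$ exists because each power graph of $\mathcal E$ has bounded degree and hence finite Borel chromatic number by Theorem~\ref{th:KST}, and one interleaves the resulting finite partitions over the independence level. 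For chains of infinite cardinality I would, in separate interleaved stages, use that for each fixed colour pair the maximal alternating rays are pairwise vertex-disjoint and each is ``claimed'' by only boundedly many uncoloured edges (those meeting a bounded neighbourhood of its endpoint, as in the degree bound of Proposition~\ref{EstimateSimpleVizing}), so that a Borel family of pairwise vertex-disjoint infinite Vizing chains can be selected and shifted.

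Convergence follows from the cocycle identity. If $e\in C_{n+1}$ with $Q(e)=(e=e_0,e_1,\dots)$, then only the edges of $Q(e)$ change colour; and for each $i\ge1$ the assignment $e_0\mapsto e_i$ is a Borel injection $\Phi_i$ (injective because the chains $Q(e)$ are pairwise vertex-disjoint and edge-injective) with $(\Phi_i(e_0),e_0)\in F_{\mathcal E}$, so $\nu(\Phi_i(\dom(\Phi_i)))=\int_{\dom(\Phi_i)}\rho(e_i,e_0)\,d\nu$. Summing over $i$ gives $\nu(\{g:c_n(g)\neq c_{n+1}(g)\})\le\nu(C_{n+1})+\int_{C_{n+1}}\sum_{i\ge1}\rho(e_i,e_0)\,d\nu\le(L+1)\,\nu(C_{n+1})$. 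Since each edge is coloured at most once during the construction, $\sum_n\nu(C_{n+1})\le\nu(E)=1$, hence $\sum_n\nu(\{g:c_n(g)\neq c_{n+1}(g)\})<\infty$, and the Borel--Cantelli lemma shows that $(c_n(g))_n$ stabilizes for $\nu$-almost every $g$. Thus $d=\lim_nc_n$ is defined on a Borel $\nu$-co-null set, is Borel, is a proper partial colouring, and satisfies $U_d\subseteq U_{c_n}$ modulo a $\nu$-null set.

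It remains to verify that $d$ does not admit improvement of weight $L$. The set of edges whose colour does not stabilize is $\nu$-null, hence so is its $F_{\mathcal E}$-saturation by $F_{\mathcal E}$-quasi-invariance of $\nu$; so for $\nu$-almost every $e\in U_d$ the colouring $c_n$ eventually agrees with $d$ on the whole $F_{\mathcal E}$-class of $e$. If such an $e$ had, for some $x\in e$, a Vizing chain $V_d(x,e)$ with $\sum_{f\in V_d(x,e)\setminus\{e\}}\rho(f,e)<L$, then a finite $V_d(x,e)$ is determined by $d$ on finitely many edges (the chain together with the neighbourhoods certifying where its alternating part terminates), so for all large $n$ with $e\in A_{n+1}$ and $N_{n+1}\ge|V_d(x,e)|$ we would get $e\in C_{n+1}$ and $e$ coloured at stage $n+1$, while an infinite $V_d(x,e)$ would be caught at one of the infinite-ray stages; either way $e\notin U_d$, a contradiction. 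I expect the main obstacle to be exactly this bookkeeping: choosing the cutoffs $N_n$, the independence levels $k_n$ and the infinite-ray stages so that they are mutually compatible, every small-$\rho$-weight Vizing chain --- long or infinite --- is eventually augmented, and all chains shifted at a common stage stay pairwise vertex-disjoint; this is trivial in the invariant case only because all relevant chains there have at most $3L$ edges.
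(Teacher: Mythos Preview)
Your approach is sound on the key estimate $\nu(\{g:c_n(g)\neq c_{n+1}(g)\})\le(L+1)\,\nu(C_{n+1})$ via the cocycle identity, and the Borel--Cantelli convergence argument coincides with the paper's. The difference is structural: you run an $\omega$-length iteration with a carefully scheduled sequence of cutoffs and independence levels, whereas the paper runs a transfinite induction of length $<\omega_1$.

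There is, however, a genuine gap in your verification step for infinite Vizing chains. From Borel--Cantelli and quasi-invariance you correctly deduce that for $\nu$-a.e.\ $e$ every edge in $[e]_{F_{\mathcal E}}$ stabilizes. But your phrase ``$c_n$ eventually agrees with $d$ on the whole $F_{\mathcal E}$-class of $e$'' asserts \emph{uniform} stabilization on an infinite class, which does not follow: each $f\in[e]$ has its own stabilization time $n_0(f)$, with no common bound. So if $V_d(x,e)$ is infinite with $\rho$-weight $<L$, there is no stage $n$ at which you can conclude that $V_{c_n}(x,e)=V_d(x,e)$, nor even that $V_{c_n}(x,e)$ is infinite with the correct colour pair and weight $<L$: the chain $V_{c_n}(x,e)$ agrees with $V_d(x,e)$ on longer and longer prefixes but may then diverge into a heavy tail or terminate. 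Hence your assertion that such an $e$ ``would be caught at one of the infinite-ray stages'' is unjustified, and the scheduling problem you flag as the main obstacle is genuinely unresolved in the infinite-chain case.

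The paper sidesteps this entirely by transfinite induction. At each successor step one works only with the \emph{current} colouring $c_\alpha$: if the set $A_{c_\alpha}$ of edges whose Vizing chain (for $c_\alpha$) has weight $<L$ is non-null, pick any positive-measure Borel subset (splitting into the case of finite chains of a fixed length, or infinite chains of a fixed colour pair, and thinning to the required $\mathcal E$-independence exactly as you describe), and augment. Then $\nu(U_{c_\alpha})$ strictly decreases, and since a strictly decreasing sequence of reals has countable length, the process halts at some $\gamma<\omega_1$ with $\nu(A_{c_\gamma})=0$. Limit stages are handled precisely by your Borel--Cantelli argument. The point is that no comparison between $V_{c_n}$ and $V_d$ is ever needed: one never asks whether a limit colouring admits improvement, only whether the current one does.
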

\begin{proof}
Let $d;E\to [\Delta(\mathcal{G})+\pi(\mathcal{G})]$ be a proper partial Borel colouring.
We define $A_d$ to be the set of edges in $U_d$ such that there is $x\in e$ such that 
$$\sum_{f\in V_d(x,e)\setminus\{e\}}\rho(f,e)<L.$$
It is clear that $A_d$ is a Borel set and that $d$ satisfies the requirements of the proposition if and only if $\nu(A_d)=0$.

We use induction to build a transfinite sequence of proper partial Borel colourings $\{c_{\alpha}\}_{\alpha<\omega_1}$ that satisfy the following
\begin{enumerate}[(i), nosep]
    \item\label{it:1} $c_0=\emptyset$,
    \item\label{it:2} for every $\alpha\le \beta<\omega_1$, we have $\dom(c_\alpha)\subseteq \dom(c_\beta)$ up to a $\nu$-null set,
    \item\label{it:3}  for every $\alpha<\beta<\omega_1$, if $c_\alpha\not=c_\beta$ on a non-$\nu$-null set then $\nu(U_{c_\alpha})>\nu(U_{c_\beta})$,
    \item\label{it:4} for ever $\alpha<\omega_1$, if $\nu(A_{c_\alpha})>0$, then $\dom(c_{\alpha})\not=\dom(c_{\alpha+1})$ on a non-$\nu$-null set.
\end{enumerate}
Note that if we manage to find such a sequence, then we are done.
This follows from the fact that there are no strictly decreasing sequences of real numbers of length $\omega_1$ and therefore there is $\gamma<\omega_1$ such that $\nu(U_{c_\beta})=\nu(U_{c_{\gamma}})$ for every $\beta\ge \gamma$.
This implies that $\nu(A_{c_{\gamma}})=0$ by \ref{it:2}, \ref{it:3} and~\ref{it:4}.

As usual we distinguish the successor and limit step in our construction and we start with the successor.
Suppose that $c_\alpha$ is defined.
We want to find $c_{\alpha+1}$.
Suppose that $\nu(A_{c_\alpha})>0$, otherwise we are done, and denote as $w(e)$ the vertex in $e$ that is a witness to the fact that $e\in A_{c_\alpha}$.
Such a selection $w:A_{c_\alpha}\to V$ can be chosen in a Borel way (the argument is similar as in Proposition~\ref{HisBorel}).
Define
$$Y_\alpha:=\{e\in A_{c_\alpha}:|V_{c_{\alpha}}(w(e),e)|=\infty\},$$
$$Z_\alpha:=\{e\in A_{c_\alpha}:|V_{c_\alpha}(w(e),e)|<\infty\}.$$
Clearly we have $A_{c_\alpha}=Y_\alpha\cup Z_\alpha$ and both sets are Borel.

Suppose that $\nu(Z_\alpha)>0$.
In this case we may find $k<\omega$ such that $\nu(Z_{\alpha,k})>0$ where
$$Z_{\alpha,k}:=\{e\in A_{c_\alpha}:|V_{c_\alpha}(w(e),e)|=k\}.$$
The desired modification follows similar lines as the algorithm before Theorem~\ref{MainInv}.
Namely pick $Z\subseteq Z_{\alpha,k}$ that is $6k$-independent in $\mathcal{E}$ such that $\nu(Z)>0$ and do the shifts of $c_{\alpha}$ along $V_{c_\alpha}(w(e),e)$ simultaneously for every $e\in Z$ to define some proper partial Borel colouring~$c'_{\alpha+1}$.
Because each $V_{c_{\alpha}}(w(e),e)$ is $c_\alpha$-augmenting we can extend $c'_{\alpha+1}$ to the desired proper partial Borel colouring $c_{\alpha+1}$ that satisfies all the required properties.
We define $B_\alpha:=Z$ and $C_\alpha:=\{e\in E:c_\alpha(e)\not=c_{\alpha+1}(e)\}\cup Z$.
It follows from the definition of $A_{c_\alpha}$ and the cocycle relation that 
$$\nu(C_\alpha)\le (L+1)\nu(B_{\alpha})$$
and
$$\nu(U_{c_{\alpha+1}})<\nu(U_{c_\alpha}).$$

Suppose that $\nu(Z_\alpha)=0$.
Then we must have $\nu(Y_\alpha)>0$.
For colours $\gamma,\delta$ we define $Y_{\alpha,\gamma/\delta}$ to be the set of those $e\in Y_\alpha$ such that $P_c(w(e),e)$ is an infinite $\gamma/\delta$-path.
It is clear that $Y_{\alpha,\gamma/\delta}$ is Borel.
It also follows that there is a pair of colours $\gamma/\delta$ such that $\nu(Y_{\alpha,\gamma/\delta})>0$.
Pick some $Y\subseteq Y_{\alpha,\gamma/\delta}$ that is $5$-independent in the graph $\mathcal{E}$ and still $\nu(Y)>0$.
It follows from the definition of $V_{c_\alpha}(w(e),e)$ that if $e_0\not=e_1\in Y$, then no $f_0\in V_{c_\alpha}(w(e_0),e_0)$ and $f_1\in V_{c_\alpha}(w(e_1),e_1)$ share vertex.
This is because $e_0,e_1$ are far apart and both $P_c(w(e_0),e_0)$ and $P_c(w(e_1),e_1)$ are infinite $\delta/\gamma$-paths.
This implies that we can make the shift of $c_\alpha$ along each $e\in Y$ simultaneously and define $c_{\alpha+1}$ to be the shift.
Since each Vizing's chain is infinite, $c_{\alpha+1}$ satisfies all the required properties.
We put $B_\alpha:=Y$ and $C_\alpha:=\{e\in G:c_\alpha(e)\not=c_{\alpha+1}(e)\}\cup Y$.
It follows from the definition of $A_{c_\alpha}$ and the cocycle relation that 
$$\nu(C_\alpha)\le (L+1)\nu(B_{\alpha})$$
and
$$\nu(U_{c_{\alpha+1}})<\nu(U_{c_\alpha}).$$
This finishes the construction in the successor stage.

Suppose that $\alpha$ is a limit ordinal and $\{c_\beta\}_{\beta<\alpha}$ is defined.
Write $B_\beta=\dom(c_{\beta+1})\setminus \dom(c_\beta)$, i.e., the set of edges that get coloured in the $\beta$-th step.
We have clearly 
$$\sum_{\beta<\alpha}\nu(B_\beta)\le 1.$$
Define $C_\beta$ to be the set of edges that changed the colour in the $\beta$-th step, i.e., $f\in C_\beta$ if $f\in B_\beta$ or $c_{b+1}(f)\not=c_\beta(f)$.
We have by construction of the successor stage (see above) that 
$$\sum_{\beta<\alpha}\nu(C_\beta)\le L+1.$$
Since $\alpha<\omega_1$ there is a bijection $q:\mathbb{N}\to\alpha$.
By the Borel-Cantelli Lemma we have
$$\nu\Big(\bigcap_{n<\omega}\bigcup_{k\ge n}C_{q(k)}\Big)=0.$$
This implies that up to $\nu$-null set every $f\in \bigcup_{\beta<\alpha}\dom(c_\beta)$ changes its colour only finitely many times.
In other words we can define 
$$c_\alpha(f):=\lim_{\beta\to\alpha}c_{\beta}(f)$$
up to $\nu$-null subset of $f\in \bigcup_{\beta<\alpha} \dom(c_\beta)$.
\end{proof}

\begin{proof}[Proof of Theorem~\ref{QuasiVizing} and of Theorem~\ref{th:ap}]
Let $\mu \in \mathcal{P}(V)$.
The note before Proposition~\ref{pr:AppMeasureOnEdges} implies that we may assume that $\mu\in \mathcal{Q}\mathcal{I}_{F_\mathcal{G}}$ and Proposition~\ref{pr:AppMeasureOnEdges} implies that the $F_{\mathcal{E}}$-quasi-invariant measure $\hat{\mu}$ satisfies 
$$\chi'_{AP,\mu}(\mathcal{G})\le \chi_{AP,\hat\mu}(I_\mathcal{G}).$$
This shows that Theorem~\ref{QuasiVizing} implies Theorem~\ref{th:ap}.

We show Theorem~\ref{QuasiVizing}.
Let $c;E\to [\Delta(\mathcal{G})+\pi(\mathcal{G})]$ be a proper partial Borel colouring that does not admit improvement of weight $L$ (with respect to $\nu$).
Such a colouring exists according to Proposition~\ref{NoImprovementQuasi}.
Consider the bipartite Borel graph $\mathcal{H}_c$.
Recall that $\mathcal{H}_c\subseteq F_{\mathcal{E}}$.
The properties of the cocycle $\rho$ then give that
$$\int_{U_c}\left(\sum_{f\in V_c(v_0(e),e)\setminus\{e\}\cup V_c(v_1(e),e)\setminus\{e\}}\rho(f,e)\right)d\nu=\int_{\dom(c)}\deg_{\mathcal{H}_c}(g)d\nu.$$
Using Proposition~\ref{EstimateSimpleVizing} and the fact that $c$ does not admit improvement of weight $L$, we conclude that
$$L\nu(U_c)\le (\Delta(\mathcal{G})+\pi(\mathcal{G}))^4.$$
Since $L$ can be arbitrary we are done.
\end{proof}

\section*{Acknowledgements}

The authors thank the anonymous referee for useful comments.

\bibliography{bibexport}

%\bibliography{oleg,general,graph,Analysis,ex,sets,misc,limits,random,algorithm}

\end{document}